\newtheorem{dummytheorem}{Dummy-Theorem}[section]
\newcommand{\proofendsign}{$\Box$} 
\newtheorem{definition}[dummytheorem]{Definition}
\newtheorem{lemma}[dummytheorem]{Lemma}
\newtheorem{theorem}[dummytheorem]{Theorem}
\newtheorem{proposition}[dummytheorem]{Proposition}
\newtheorem{corollary}[dummytheorem]{Corollary}
\newenvironment{proof}{{\noindent \bf Proof }}
 {{\hspace*{\fill}\proofendsign\par\bigskip}}
\newtheorem{remarknorm}[dummytheorem]{Remark}
\newtheorem{examplenorm}[dummytheorem]{Example}
\newcommand{\V}{\mathbf{V}}
\newcommand{\bE}{\mathbf{E}}
\newcommand{\bD}{\mathbf{D}}
\newcommand{\bC}{\mathbf{C}}
\newcommand{\bK}{\mathbf{K}}
\newcommand{\N}{\mathbb{N}}
\newcommand{\R}{\mathbb{R}}
\newcommand{\E}{\mathbb{E}}
\newcommand{\F}{\mathbf{F}}
\newcommand{\pr}{\mathbb{P}}
\newcommand{\ex}{\mathbb{E}}
\newcommand{\vari}{\mathbb{V}{\rm ar}}
\newcommand{\covi}{\mathbb{C}{\rm ov}}
\newcommand{\eins}{\mathbbm{1}}
\newcommand{\avatr}{{\rm AV@R}}
\newcommand{\cadlag}{c\`adl\`ag}
\begin{document}


\title{Functional delta-method for the bootstrap of uniformly quasi-Hadamard differentiable functionals}

\author{
Eric Beutner\footnote{Department of Quant.~Economics, Maastricht University, {\tt e.beutner@maastrichtuniversity.nl}}
\qquad\qquad
Henryk Zähle\footnote{Department of Mathematics, Saarland University, {\tt zaehle@math.uni-sb.de}}}
\date{} 
\maketitle

\begin{abstract}
The functional delta-method provides a convenient tool for deriving bootstrap consistency of a sequence of plug-in estimators w.r.t.\ a given functional from bootstrap consistency of the underlying sequence of estimators. It has recently been shown in \cite{BeutnerZaehle2016} that the range of applications of the functional delta-method for establishing bootstrap consistency {\em in probability} of the sequence of plug-in estimators can be considerably enlarged by replacing the usual condition of Hadamard differentiability of the given functional by the weaker condition of quasi-Hadamard differentiability. Here we introduce the notion of uniform quasi-Hadamard differentiability and show that this notion extends the set of functionals for which {\em almost sure} bootstrap consistency of the corresponding sequence of plug-in estimators can be obtained by the functional delta-method. We illustrate the benefit of our results by means of the Average Value at Risk functional as well as the composition of the Average Value at Risk functional and the compound convolution functional. For the latter we use a chain rule to be proved here. In our examples we consider the weighted exchangeable bootstrap for independent observations and the blockwise bootstrap for $\beta$-mixing observations.
\end{abstract}

{\bf Keywords:} Bootstrap; Functional delta-method; Uniform quasi-Hadamard differentiability; Chain rule; Statistical functional; Weak convergence for the open-ball $\sigma$-algebra; Average Value at Risk; Compound distribution; Weighted exchangeable bootstrap; Blockwise bootstrap

\bigskip
\bigskip



\newpage

\section{Introduction}\label{section introduction}

The functional delta-method is a widely used technique to derive bootstrap consistency for a sequence of plug-in estimators w.r.t.\ a map $H$ from bootstrap consistency of the underlying sequence of estimators. An essential limitation of the classical functional delta-method for proving bootstrap consistency in probability (or outer probability) is the condition of Hadamard differentiability on $H$ (cf.\ Theorem 3.9.11 of \cite{van der Vaart Wellner 1996}). It is commonly acknowledged that Hadamard differentiability fails for many relevant maps $H$. Recently, it was demonstrated in \cite{BeutnerZaehle2016} that a functional delta-method for the bootstrap {\em in probability} can also be proved for {\em quasi}-Hadamard differentiable maps $H$. Quasi-Hadamard differentiability is a weaker notion of ``differentiability'' than Hadamard differentiability and can be obtained for many relevant statistical functionals $H$; see, e.g., \cite{BeutnerWuZaehle2012,BeutnerZaehle2010,BeutnerZaehle2012,Kraetschmeretal2013,KraetschmerZaehlel2016}. Using the classical functional delta-method to prove almost sure (or outer almost sure) bootstrap consistency for a sequence of plug-in estimators w.r.t.\ a map $H$ from almost sure (or outer almost sure) bootstrap consistency of the underlying sequence of estimators requires {\em uniform} Hadamard differentiability on $H$ (cf.\ Theorem 3.9.11 of \cite{van der Vaart Wellner 1996}). In the present article we will introduce the notion of {\em uniform quasi}-Hadamard differentiability and demonstrate that one can even obtain a functional delta-method for the {\em almost sure} bootstrap and {\em uniformly quasi}-Hadamard differentiable maps $H$. Proposition \ref{proposition AVaR} below shows that the notion of uniform quasi-Hadamard differentiability is weaker than uniform Hadamard differentiability, because this proposition shows that the Average Value at Risk functional, which fails to be Hadamard differentiable, is uniformly quasi-Hadamard differentiable.

To explain the background and the contribution of the paper at hand more precisely, assume that we are given an estimator $\widehat T_n$ for a parameter $\theta$ in a vector space, with $n$ denoting the sample size, and that we are actually interested in the aspect $H(\theta)$ of $\theta$. Here $H$ is any map taking values in a vector space. Then $H(\widehat T_n)$ is often a reasonable estimator for $H(\theta)$. One of the main objects in statistical inference is the distribution of the error $H(\widehat T_n)-H(\theta)$, because the error distribution can theoretically be used to derive confidence regions for $H(\theta)$. However in applications the exact specification of the error distribution is often hardly possible or even impossible. A widely used way out is to derive the {\em asymptotic} error distribution, i.e.\ the weak limit $\mu$ of ${\rm law}\{a_n(H(\widehat T_n)-H(\theta))\}$ for suitable normalizing constants $a_n$ tending to infinity, and to use $\mu$ as an approximation for $\mu_n:={\rm law}\{a_n(H(\widehat T_n)-H(\theta))\}$ for large $n$. Since $\mu$ usually still depends on the unknown parameter $\theta$, one should use the notation $\mu_\theta$ instead of $\mu$. In particular, one actually uses $\mu_{\widehat T_n}:=\mu_\theta|_{\theta=\widehat T_n}$ as an approximation for $\mu_n$ for large $n$.

Not least because of the estimation of the parameter $\theta$ of $\mu_\theta$, the approximation of $\mu_n$ by $\mu_{\widehat T_n}$ is typically only moderate. An often more efficient alternative technique to approximate $\mu_n$ is the bootstrap. The bootstrap has been introduced by Efron \cite{Efron1979} in 1979 and many variants of his method have been introduced since then. One may refer to \cite{DavisonHinkley1997,Efron1994,Lahiri2003,ShaoTu1995} for general accounts on this topic. The basic idea of the bootstrap is the following. Re-sampling the original sample according to a certain re-sampling mechanism (depending on the particular bootstrap method) one can sometimes construct a so-called bootstrap version $\widehat T_n^*$ of $\widehat T_n$ for which the conditional law of $a_n(H(\widehat T_n^*)-H(\widehat T_n))$ ``given the sample'' has the same weak limit $\mu_\theta$ as the law of $a_n(H(\widehat T_n)-H(\theta))$ has. The latter is referred to as bootstrap consistency. Since $\widehat T_n^*$ depends only on the sample and the re-sampling mechanism, one can at least numerically determine the conditional law of $a_n(H(\widehat T_n^*)-H(\widehat T_n))$ ``given the sample'' by means of a Monte Carlo simulation based on $L\gg n$ repetitions. The resulting law $\mu_L^*$ can then be used as an approximation of $\mu_{n}$, at least for large $n$.

In applications the roles of $\theta$ and $\widehat T_n$ are often played by a distribution function $F$ and the empirical distribution function $\widehat F_n$ of $n$ random variables that are identically distributed according to $F$, respectively. Not least for this particular setting several results on bootstrap consistency for $\widehat T_n$ are known (see also Section \ref{application to statistical functionals}). The functional delta-method then ensures that bootstrap consistency also holds for $H(\widehat T_n)$ when $H$ is suitably differentiable at $\theta$. Technically speaking, as indicated above, one has to distinguish between two types of bootstrap consistency. First bootstrap consistency {\em in probability} for $H(\widehat T_n)$ can be associated with
\begin{equation}\label{intro prob bootstrap}
    \lim_{n\to\infty}\pr^{\scriptsize{\sf out}}\big[\big\{\omega\in\Omega:\,d_{\scriptsize{\rm BL}}^\circ(P_n(\omega,\,\cdot\,),\mu_\theta)\ge\delta\big\}\big]=\,0\quad\mbox{ for all }\delta>0,
\end{equation}
where $\omega$ represents the sample, $P_n(\omega,\cdot)$ denotes the conditional law of $a_n(H(\widehat T_n^*)-H(\widehat T_n))$ given the sample $\omega$, $d_{\scriptsize{\rm BL}}^\circ$ is the bounded Lipschitz distance, and the superscript $^{\scriptsize{\sf out}}$ refers to outer probability. At this point it is worth pointing out that we consider weak convergence (resp.\ convergence in distribution) w.r.t.\ the open-ball $\sigma$-algebra, in symbols $\Rightarrow^\circ$ (resp.\ $\leadsto^\circ$), as defined in \cite[Section 6]{Billingsley1999} (see also \cite{Dudley1966,Dudley1967,Pollard1984,ShorackWellner1986}) and that by the Portmanteau theorem A.3 in \cite{BeutnerZaehle2016} weak convergence $\mu_n\Rightarrow^\circ\mu$ holds if and only if $d_{\scriptsize{\rm BL}}^\circ(\mu_n,\mu)\to 0$. Second bootstrap consistency {\em almost surely} for $H(\widehat T_n)$ means that
\begin{equation}\label{intro as bootstrap}
    {\rm law}\big\{a_n\big(H(\widehat T_n^*(\omega,\,\cdot\,))-H(\widehat T_n(\omega))\big)\big\}\,\Rightarrow^\circ\,\mu_\theta\qquad\mbox{$\pr$-a.e.\ $\omega$}.
\end{equation}
In \cite{BeutnerZaehle2016} it has been shown that (\ref{intro prob bootstrap}) follows from the respective analogue for $\widehat T_n$ when $H$ is suitably quasi-Hadamard differentiable at $\theta$. This extends Theorem 3.9.11 of \cite{van der Vaart Wellner 1996} which covers only Hadamard differentiable maps. In this article we will show that (\ref{intro as bootstrap}) follows from the respective analogue for $\widehat T_n$ when $H$ is suitably {\em uniformly} quasi-Hadamard differentiable at $\theta$; the notion of uniform quasi-Hadamard differentiable will be introduced in Definition \ref{definition quasi hadamard} below. This extends Theorem 3.9.13 of \cite{van der Vaart Wellner 1996} which covers only Hadamard differentiable maps.

To demonstrate that the theory presented here leads directly to new results for interesting applications we consider the Average Value at Risk functional and the compound distribution functional. To the best of our knowledge so far there do not exit results on almost sure bootstrap consistency for the Average Value at Risk functional when the underlying data are dependent. The same seems to be true for the compound distribution functional and consequently also for the composition of the Average Value at Risk functional and the compound distribution functional.

The rest of the article is organized as follows. In Section \ref{Abstract delta-method for the bootstrap} we introduce the definition of uniform quasi-Hadamard differentiability and prove a functional delta-method for almost sure bootstrap consistency based on it. In Section \ref{application to statistical functionals} this functional delta-method is discussed if the underlying sequence of estimators is the empirical distribution function. Section \ref{Examples} shows that the Average Value at Risk functional and the compound distribution functional are uniformly quasi-Hadamard differentiable. Moreover, we show there using a chain rule that the composition of the Average Value at Risk functional and the compound distribution functional is uniformly quasi-Hadamard differentiable. This chain rule is proved in the Appendix \ref{Delta-method for uniformly QHD maps} where we also prove a delta-method for uniformly quasi-Hadamard differentiable maps that is the basis for the main result of Section \ref{Abstract delta-method for the bootstrap}. In the Appendix \ref{Slutzky type results} we give results on convergence in distribution for the open-ball $\sigma$-algebra which are needed for the main results.


\section{Abstract delta-method for the bootstrap}\label{Abstract delta-method for the bootstrap}

Theorem \ref{modified delta method for the bootstrap} below provides an abstract delta-method for the almost sure bootstrap. It is based on the notion of uniform quasi-Hadamard differentiability which we introduce first. This sort of differentiability extends the notion of quasi-Hadamard differentiability as introduced in \cite{BeutnerZaehle2010,BeutnerZaehle2016}. The latter corresponds to the differentiability concept in (i) of Definition \ref{definition quasi hadamard} ahead with ${\cal S}$ and $\widetilde\bE$ as in (iii) and (v) of this definition. Let $\V$ and $\widetilde\V$ be vector spaces. Let $\bE\subseteq\V$ and $\widetilde\bE\subseteq\widetilde\V$ be subspaces equipped with norms $\|\cdot\|_{\bE}$ and $\|\cdot\|_{\widetilde\bE}$, respectively. Let
$$
    H:\V_H\longrightarrow\widetilde\V
$$
be any map defined on some subset $\V_H\subseteq\V$.

\begin{definition}\label{definition quasi hadamard}
Let $\bE_0$ be a subset of $\bE$, and ${\cal S}$ be a set of sequences in $\V_H$.

(i) The map $H$ is said to be uniformly quasi-Hadamard differentiable w.r.t.\ ${\cal S}$ tangentially to $\bE_0\langle\bE\rangle$ with trace $\widetilde\bE$ if $H(y_1)-H(y_2)\in\widetilde\bE$ for all $y_1,y_2\in\V_H$, $n\in\N$, and there is some continuous map $\dot H_{\cal S}:\bE_0\rightarrow\widetilde\bE$ such that
\begin{eqnarray}\label{def eq for AHD}
    \lim_{n\to\infty}\Big\| \dot H_{\cal S}(x)-\frac{H(\theta_{n}+\varepsilon_nx_{n})-H(\theta_n)}{\varepsilon_n}\Big\|_{\widetilde\bE}=0
\end{eqnarray}
holds for each quadruple $((\theta_n),x,(x_{n}),(\varepsilon_n))$, with $(\theta_{n})\in{\cal S}$, $x\in\bE_0$, $(x_{n})\subseteq\bE$ satisfying $\|x_{n}-x\|_{\bE}\to 0$ as well as $(\theta_n+\varepsilon_nx_{n})\subseteq\V_H$, and $(\varepsilon_n)\subseteq(0,\infty)$ satisfying $\varepsilon_n\to 0$. In this case the map $\dot H_{\cal S}$ is called uniform quasi-Hadamard derivative of $H$ w.r.t.\ ${\cal S}$ tangentially to $\bE_0\langle\bE\rangle$.

(ii) If ${\cal S}$ consists of all sequences $(\theta_n)\subseteq\V_H$ with $\theta_n-\theta\in\bE$, $n\in\N$, and $\|\theta_n-\theta\|_\bE\to 0$ for some fixed $\theta\in\V_H$, then we replace the phrase ``w.r.t.\ ${\cal S}$'' by ``at $\theta$'' and ``$\dot H_{\cal S}$'' by ``$\dot H_\theta$''.

(iii) If ${\cal S}$ consists only of the constant sequence $\theta_n=\theta$, $n\in\N$, then we skip the phrase ``uniformly'' and replace the phrase ``w.r.t.\ ${\cal S}$'' by ``at $\theta$'' and ``$\dot H_{\cal S}$'' by ``$\dot H_\theta$''. In this case we may also replace ``$H(y_1)-H(y_2)\in\widetilde\bE$ for all $y_1,y_2\in\V_H$'' by ``$H(y) - H(\theta) \in\widetilde\bE$ for all $y\in\V_H$''.

(iv) If $\bE=\V$, then we skip the phrase ``quasi-''.

(v) If $\widetilde\bE=\widetilde\V$, then we skip the phrase ``with trace $\widetilde\bE$''.
\end{definition}

The conventional notion of uniform Hadamard differentiability as used in Theorem 3.9.11 of \cite{van der Vaart Wellner 1996} corresponds to the differentiability concept in (i) with ${\cal S}$ as in (ii), $\bE$ as in (iv), and $\widetilde\bE$ as in (v). Proposition \ref{proposition AVaR} below shows that it is beneficial to refrain from insisting on $\bE=\V$ as in (iv). It was recently discussed in \cite{Bellonietal2016} that it can be also beneficial to refrain from insisting on the assumption of (ii). For $\bE=\V$ (``non-quasi'' case) uniform Hadamard differentiability in the sense of Definition B.1 in \cite{Bellonietal2016} corresponds to uniform Hadamard differentiability in the sense of our Definition \ref{definition quasi hadamard} (part (i) and (iv)) when ${\cal S}$ is chosen as the set of all sequences $(\theta_n)$ in a compact metric space $(\bK_\theta,d_\bK)$ with $\theta\in\bK_\theta\subseteq\V_H$ for which $d_\bK(\theta_n,\theta)\to0$. In Comment B.3 of \cite{Bellonietal2016} it is illustrated by means of the quantile functional that this notion of differentiability (subject to a suitable choice of $(\bK_\theta,d_\bK)$) is strictly weaker than the notion of uniform Hadamard differentiability that was used in the classical delta-method for the almost sure bootstrap, Theorem 3.9.11 in \cite{van der Vaart Wellner 1996}. Although this shows that the flexibility w.r.t.\ ${\cal S}$ in our Definition \ref{definition quasi hadamard} can be beneficial, it is somehow even more important that we allow for the ``quasi'' case.

Of course, the smaller the family ${\cal S}$ the weaker the condition of uniform quasi-Hadamard differentiability w.r.t.\ ${\cal S}$. On the other hand, if the set ${\cal S}$ is too small then condition (e) in Theorem \ref{modified delta method for the bootstrap} ahead may fail. That is, for an application of the functional delta-method in the form of Theorem \ref{modified delta method for the bootstrap} the set ${\cal S}$ should be large enough for condition (e) to be fulfilled and small enough for being able to establish uniform quasi-Hadamard differentiability w.r.t.\ ${\cal S}$ of the map $H$.

We now turn to the abstract delta-method. As mentioned in the introduction, convergence in distribution will always be considered for the open-ball $\sigma$-algebra. We will use the terminology {\em convergence in distribution$^\circ$} (symbolically $\leadsto^\circ$) for this sort of convergence; for details see the Appendix \ref{Appendix Weak Convergence} and the Appendices A--C of \cite{BeutnerZaehle2016}. In a separable metric space the notion of convergence in distribution$^\circ$ boils down to the conventional notion of convergence in distribution for the Borel $\sigma$-algebra. In this case we use the symbol $\leadsto$ instead of $\leadsto^\circ$.

Let $(\Omega,{\cal F},\pr)$ be a probability space, and $(\widehat T_n)$ be a sequence of maps
$$
    \widehat T_n:\Omega\longrightarrow\V.
$$
Regard $\omega\in\Omega$ as a sample drawn from $\pr$, and $\widehat T_n(\omega)$ as a statistic derived from $\omega$. Somewhat unconventionally, we do not (need to) require at this point that $\widehat T_n$ is measurable w.r.t.\ any $\sigma$-algebra on $\V$. Let $(\Omega',{\cal F}',\pr')$ be another probability space and set
$$
    (\overline\Omega,\overline{\cal F},\overline{\pr}):=(\Omega\times\Omega',{\cal F}\otimes{\cal F}',\pr\otimes\pr').
$$
The probability measure $\pr'$ represents a random experiment that is run independently of the random sample mechanism $\pr$. In the sequel, $\widehat T_n$ will frequently be regarded as a map defined on the extension $\overline\Omega$ of $\Omega$. Let
$$
    \widehat T_n^*: \overline\Omega\longrightarrow\V
$$
be any map. Since $\widehat T_n^{*}(\omega,\omega')$ depends on both the original sample $\omega$ and the outcome $\omega'$ of the additional independent random experiment, we may regard $\widehat T_n^*$ as a bootstrapped version of $\widehat T_n$. Moreover, let
$$
    \widehat C_n: \Omega \longrightarrow \V
$$
be any map. As with $\widehat T_n$ we often regard $\widehat C_n$ as a map defined on the extension $\overline\Omega$ of $\Omega$. We will use $\widehat C_n$ together with a scaling sequence to get weak convergence results for $\widehat T_n^*$. The role of $\widehat C_n$ is often played by $\widehat T_n$ itself (cf.\ Example \ref{iid bootstrap}), but sometimes also by a different map (cf.\ Example \ref{block bootstrap}). Assume that $\widehat T_n$, $\widehat T_n^*$, and $\widehat C_n$ take values only in $\V_H$.

Let ${\cal B}^\circ$ and $\widetilde{\cal B}^\circ$ be the open-ball $\sigma$-algebras on $\bE$ and $\widetilde\bE$ w.r.t.\ the norms $\|\cdot\|_\bE$ and $\|\cdot\|_{\widetilde\bE}$, respectively. Note that ${\cal B}^\circ$ coincides with the Borel $\sigma$-algebra on $\bE$ when $(\bE,\|\cdot\|_{\bE})$ is separable. The same is true for $\widetilde{\cal B}^\circ$. Set $\overline{\widetilde\bE}:=\widetilde\bE\times\widetilde\bE$ and let $\overline{\widetilde{\cal B}^\circ}$ be the $\sigma$-algebra on $\overline{\widetilde\bE}$ generated by the open balls w.r.t.\ the metric $\overline{\widetilde d}((\widetilde x_1,\widetilde x_2),(\widetilde y_1,\widetilde y_2)):=\max\{\|\widetilde x_1-\widetilde y_1\|_{\widetilde\bE};\|\widetilde x_2-\widetilde y_2\|_{\widetilde\bE}\}$. Recall that $\overline{\widetilde{\cal B}^\circ}\subseteq\widetilde{\cal B}^\circ\otimes\widetilde{\cal B}^\circ$, because any $\overline{\widetilde{d}}$-open ball in $\overline{\widetilde{\bE}}$ is the product of two $\|\cdot\|_{\widetilde\bE}$-open balls in $\widetilde\bE$.

The following Theorem \ref{modified delta method for non bootstrap} is a consequence of Theorem \ref{modified delta method} in the Appendix \ref{Delta-method for uniformly QHD maps} as we assume that $\widehat T_n$ takes values only in $\V_H$. The proof of the measurability statement of Theorem \ref{modified delta method for non bootstrap} is given in the proof of Theorem \ref{modified delta method for the bootstrap}. Theorem \ref{modified delta method for non bootstrap} is stated here because, together with Theorem \ref{modified delta method for the bootstrap}, it implies almost sure bootstrap consistency whenever the limit $\xi$ is the same in Theorem \ref{modified delta method for non bootstrap} and Theorem \ref{modified delta method for the bootstrap}.

\begin{theorem}\label{modified delta method for non bootstrap}
Let $(\theta_n)$ be a sequence in $\V_H$ and $\mathcal{S}:=\{(\theta_n)\}$. Let $\bE_0\subseteq\bE$ be a separable subspace and assume that $\bE_0\in {\cal B}^{\circ}$. Let $(a_n)$ be a sequence of positive real numbers with $a_n\to\infty$, and assume that the following assertions hold:
\begin{itemize}
    \item[(a)] $a_n(\widehat T_n-\theta_n)$ takes values only in $\bE$, is $({\cal F},{\cal B}^{\circ})$-measurable, and satisfies
        \begin{equation}\label{modified delta method for the bootstrap - assumption - 10}
             a_n(\widehat T_n-\theta_n)\,\leadsto^\circ\,\xi\qquad\mbox{in $(\bE,{\cal B}^{\circ},\|\cdot\|_{\bE})$}
        \end{equation}
    for some $(\bE,{\cal B}^{\circ})$-valued random variable $\xi$ on some probability space $(\Omega_0,{\cal F}_0,\pr_0)$ with $\xi(\Omega_0)\subseteq\bE_0$.
    \item[(b)] $a_n(H(\widehat T_n)-H(\theta_n))$ takes values only in $\widetilde\bE$ and is $({\cal F},\widetilde{\cal B}^\circ)$-measurable.
    \item[(c)] $H$ is uniformly quasi-Hadamard differentiable w.r.t.\ ${\cal S}$ tangentially to $\bE_0\langle\bE\rangle$ with trace $\widetilde\bE$ and uniform quasi-Hadamard derivative $\dot H_{\cal S}$.
    \end{itemize}
Then $\dot H_{\cal S}(\xi)$ is $({\cal F}_0,\widetilde{\cal B}^{\circ})$-measurable and
$$
    a_n\big(H(\widehat T_n)-H(\theta_n)\big)\,\leadsto^\circ\,\dot H_{\cal S}(\xi)\qquad\mbox{in $(\widetilde\bE,\widetilde{\cal B}^\circ,\|\cdot\|_{{\widetilde\bE}})$}.
$$
\end{theorem}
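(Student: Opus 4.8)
The plan is to recognise the assertion as a functional delta-method and to reduce it to an extended continuous mapping theorem for convergence in distribution$^\circ$. Put $\varepsilon_n:=1/a_n$, so that $\varepsilon_n\to0$, and consider the difference-quotient maps
$$
    g_n(x):=\frac{H(\theta_n+\varepsilon_n x)-H(\theta_n)}{\varepsilon_n},
$$
defined for every $x\in\bE$ with $\theta_n+\varepsilon_n x\in\V_H$. Setting $x_n:=a_n(\widehat T_n-\theta_n)$, assumption (a) guarantees that $x_n$ takes values in $\bE$, and since $\widehat T_n$ takes values in $\V_H$ we have $\theta_n+\varepsilon_n x_n=\widehat T_n\in\V_H$; hence $g_n(x_n)$ is well defined and equals $a_n(H(\widehat T_n)-H(\theta_n))$. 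Thus the target quantity is the image of $x_n$ under $g_n$, and by (b) this image is $({\cal F},\widetilde{\cal B}^\circ)$-measurable.

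First I would translate assumption (c) into a continuous-convergence statement for the maps $g_n$ along $\bE_0$. Because ${\cal S}=\{(\theta_n)\}$ and $\varepsilon_n=1/a_n\to0$, uniform quasi-Hadamard differentiability is, by (\ref{def eq for AHD}) applied to the quadruple $((\theta_n),x,(y_n),(\varepsilon_n))$, precisely the assertion that for every $x\in\bE_0$ and every sequence $(y_n)\subseteq\bE$ with $\|y_n-x\|_{\bE}\to0$ and $\theta_n+\varepsilon_n y_n\in\V_H$ one has
$$
    \big\|g_n(y_n)-\dot H_{\cal S}(x)\big\|_{\widetilde\bE}\longrightarrow 0.
$$
This is exactly the continuous convergence $g_n\to\dot H_{\cal S}$ on $\bE_0$ that an extended continuous mapping theorem requires, with the (continuous) derivative $\dot H_{\cal S}$ as limit map.

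Next I would feed this into an extended continuous mapping theorem for $\leadsto^\circ$. Assumption (a) provides $x_n\leadsto^\circ\xi$ in $(\bE,{\cal B}^\circ,\|\cdot\|_{\bE})$ with $\xi(\Omega_0)\subseteq\bE_0$; together with the continuous convergence above and the measurability of $g_n(x_n)$ from (b), the extended continuous mapping theorem yields $g_n(x_n)\leadsto^\circ\dot H_{\cal S}(\xi)$, that is
$$
    a_n\big(H(\widehat T_n)-H(\theta_n)\big)\,\leadsto^\circ\,\dot H_{\cal S}(\xi)\qquad\mbox{in }(\widetilde\bE,\widetilde{\cal B}^\circ,\|\cdot\|_{\widetilde\bE}).
$$
For the measurability of the limit, note that $\xi$ is $({\cal F}_0,{\cal B}^\circ)$-measurable with range in the separable subspace $\bE_0$, on which ${\cal B}^\circ$ coincides with the Borel $\sigma$-algebra; since $\dot H_{\cal S}:\bE_0\to\widetilde\bE$ is continuous, preimages of open balls are relatively open in $\bE_0$ and hence lie in ${\cal B}^\circ$, so $\dot H_{\cal S}(\xi)$ is $({\cal F}_0,\widetilde{\cal B}^\circ)$-measurable.

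I expect the main obstacle to be the extended continuous mapping theorem itself in the present non-separable, open-ball setting, rather than the bookkeeping above. The maps $g_n$ and $\dot H_{\cal S}$ are not assumed measurable on all of $\bE$ (only the compositions $g_n(x_n)$ are), the convergence must be read through the bounded-Lipschitz metric $d_{\scriptsize{\rm BL}}^\circ$ for the open-ball $\sigma$-algebra rather than the Borel one, and one must exploit both the separability of $\bE_0$ and the concentration $\xi(\Omega_0)\subseteq\bE_0$ (with $\bE_0\in{\cal B}^\circ$) to run the usual Portmanteau/continuous-convergence argument. These are exactly the hypotheses that make the reduction go through, and carefully verifying the open-ball version of the extended continuous mapping theorem is where the real work lies.
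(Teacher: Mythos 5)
Your proposal is correct and follows essentially the same route as the paper: the paper proves this theorem by reducing it to its Theorem \ref{modified delta method}(i), whose proof introduces exactly your difference-quotient maps $h_n(x)=\big(H(\theta_n+a_n^{-1}x)-H(\theta_n)\big)/a_n^{-1}$, reads uniform quasi-Hadamard differentiability as continuous convergence $h_n\to\dot H_{\cal S}$ along $\bE_0$, and invokes the extended Continuous Mapping theorem for the open-ball $\sigma$-algebra (Theorem C.1 of \cite{BeutnerZaehle2016}), with the same measurability argument for $\dot H_{\cal S}(\xi)$ via continuity of $\dot H_{\cal S}$ and separability of $\bE_0$. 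The only difference is that the ``real work'' you anticipate — the open-ball extended CMT — is not re-proved in the paper either, but simply cited from the companion paper.
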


\begin{theorem}\label{modified delta method for the bootstrap}
Let ${\cal S}$ be any set of sequences in $\V_H$. Let $\bE_0\subseteq\bE$ be a separable subspace and assume that $\bE_0\in {\cal B}^{\circ}$. Let $(a_n)$ be a sequence of positive real numbers with $a_n\to\infty$, and assume that the following assertions hold:
\begin{itemize}
    \item[(a)] $a_n(\widehat T_n^{*} - \widehat C_n)$ takes values only in $\bE$, is $(\overline{\cal F},{\cal B}^\circ)$-measurable, and satisfies
    \begin{equation}\label{modified delta method for the bootstrap - assumption - 15}
             a_n\big(\widehat T_n^{*}(\omega,\,\cdot\,) - \widehat C_n(\omega)\big)\,\leadsto^\circ\,\xi\qquad\mbox{in $(\bE,\mathcal{B}^{\circ},\|\cdot\|_{\bE})$},\qquad\mbox{$\pr$-a.e.\ $\omega$}
    \end{equation}
  for some $(\bE,{\cal B}^{\circ})$-valued random variable $\xi$ on some probability space $(\Omega_0,{\cal F}_0,\pr_0)$ with $\xi(\Omega_0)\subseteq\bE_0$.
    \item[(b)] $a_n(H(\widehat T_{n}^*)-H(\widehat C_n))$ takes values only in $\widetilde\bE$ and is $(\overline{\cal F},\widetilde{\cal B}^\circ)$-measurable.
    \item[(c)] $H$ is uniformly quasi-Hadamard differentiable w.r.t.\ ${\cal S}$ tangentially to $\bE_0\langle\bE\rangle$ with trace $\widetilde\bE$ and uniform quasi-Hadamard derivative $\dot H_{\cal S}$.
    \item[(d)] The uniform quasi-Hadamard derivative $\dot H_{\cal S}$ can be extended from $\bE_0$ to $\bE$ such that the extension $\dot H_{\cal S}:\bE\rightarrow\widetilde{\bE}$ is $({\cal B}^{\circ},\widetilde{\cal B}^\circ)$-measurable and continuous at every point of $\bE_0$.
    \item[(e)] $(\widehat C_n(\omega))\in{\cal S}$ for $\pr$-a.e.\ $\omega$.
    \item[(f)] The map $h:\overline{\widetilde\bE}\rightarrow\widetilde\bE$ defined by $h(\widetilde x_1,\widetilde x_2):=\widetilde x_1-\widetilde x_2$ is $(\overline{\widetilde{\cal B}^\circ},\widetilde{\cal B}^\circ)$-measurable.
\end{itemize}
Then $\dot H_{\cal S}(\xi)$ is $({\cal F}_0,\widetilde{\cal B}^{\circ})$-measurable and
\begin{equation}\label{modified delta method for the bootstrap - assumption - 40}
    a_n\big(H(\widehat T_n^*(\omega,\,\cdot\,))-H(\widehat C_n(\omega))\big)\,\leadsto^\circ\,\dot H_{\cal S}(\xi)\qquad\mbox{in $(\widetilde\bE,\widetilde{\cal B}^\circ,\|\cdot\|_{{\widetilde\bE}})$},\qquad\mbox{$\pr$-a.e.\ $\omega$}.
\end{equation}
\end{theorem}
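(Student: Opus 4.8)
The plan is to reduce the almost-sure bootstrap assertion to the non-bootstrap delta-method of Theorem \ref{modified delta method for non bootstrap} (equivalently, to the appendix version Theorem \ref{modified delta method}) applied, for $\pr$-almost every fixed sample $\omega$, on the second coordinate space $(\Omega',{\cal F}',\pr')$. For fixed $\omega$ I regard $\omega'\mapsto\widehat T_n^*(\omega,\omega')$ as a sequence of estimators and the now \emph{deterministic} sequence $\widehat C_n(\omega)$ as the centering. Condition (e) is exactly what makes this work: for $\pr$-a.e.\ $\omega$ the centering sequence $(\widehat C_n(\omega))$ belongs to ${\cal S}$, so the uniform quasi-Hadamard differentiability from (c) can be invoked along it.

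Before fixing $\omega$ I would settle the measurability of the limit. By (d) the derivative extends to a $({\cal B}^\circ,\widetilde{\cal B}^\circ)$-measurable map $\dot H_{\cal S}:\bE\to\widetilde\bE$; composing with the $({\cal F}_0,{\cal B}^\circ)$-measurable $\xi$ (whose range lies in $\bE_0$) shows $\dot H_{\cal S}(\xi)$ is $({\cal F}_0,\widetilde{\cal B}^\circ)$-measurable. As this does not involve $\omega$, it simultaneously supplies the measurability assertion of Theorem \ref{modified delta method for non bootstrap}. I would also record the elementary section fact: since the joint maps $a_n(\widehat T_n^*-\widehat C_n)$ and $a_n(H(\widehat T_n^*)-H(\widehat C_n))$ are $(\overline{\cal F},{\cal B}^\circ)$- resp.\ $(\overline{\cal F},\widetilde{\cal B}^\circ)$-measurable by (a) and (b), each $\omega$-section is $({\cal F}',{\cal B}^\circ)$- resp.\ $({\cal F}',\widetilde{\cal B}^\circ)$-measurable, sections of product-measurable maps being measurable for any target $\sigma$-algebra.

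Now let $N$ be the $\pr$-null set outside of which both the convergence in (\ref{modified delta method for the bootstrap - assumption - 15}) and the membership $(\widehat C_n(\omega))\in{\cal S}$ from (e) hold, and fix $\omega\notin N$. Restricting the defining limit (\ref{def eq for AHD}) to quadruples whose centering sequence equals $(\widehat C_n(\omega))$ shows that $H$ is uniformly quasi-Hadamard differentiable w.r.t.\ the singleton ${\cal S}_\omega:=\{(\widehat C_n(\omega))\}\subseteq{\cal S}$ with the same derivative $\dot H_{\cal S}$. The hypotheses of Theorem \ref{modified delta method for non bootstrap} then hold for the section at $\omega$ with $\theta_n:=\widehat C_n(\omega)$ and ${\cal S}_\omega$ in place of ${\cal S}$: its (a) combines the section measurability with the assumed $a_n(\widehat T_n^*(\omega,\cdot)-\widehat C_n(\omega))\leadsto^\circ\xi$, its (b) is the section measurability of $a_n(H(\widehat T_n^*(\omega,\cdot))-H(\widehat C_n(\omega)))$, and its (c) is the differentiability just obtained. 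That theorem delivers $a_n(H(\widehat T_n^*(\omega,\cdot))-H(\widehat C_n(\omega)))\leadsto^\circ\dot H_{\cal S}(\xi)$; since $\omega\notin N$ was arbitrary and $\pr(N)=0$, this is precisely (\ref{modified delta method for the bootstrap - assumption - 40}).

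I expect the real obstacle to be the open-ball $\sigma$-algebra bookkeeping rather than anything analytic, the analytic content residing wholly in (c) and being transported by the non-bootstrap delta-method. The delicate point is that the quasi-Hadamard framework guarantees only that \emph{differences} $H(y_1)-H(y_2)$ lie in $\widetilde\bE$, while $\overline{\widetilde{\cal B}^\circ}$ is in general strictly contained in $\widetilde{\cal B}^\circ\otimes\widetilde{\cal B}^\circ$. This is where condition (f), the $(\overline{\widetilde{\cal B}^\circ},\widetilde{\cal B}^\circ)$-measurability of the subtraction map $h$, is needed to legitimise the difference map underlying (b), and where (d) is needed so that $\dot H_{\cal S}$ is defined, measurable, and continuous at the points of $\bE_0$ on all of $\bE$, as demanded by the extended continuous-mapping theorem for $\leadsto^\circ$ that drives Theorem \ref{modified delta method}. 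Verifying that this extended continuous-mapping theorem genuinely applies to the merely open-ball-measurable section maps is the step I would carry out most carefully.
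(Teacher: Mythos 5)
Your proposal is correct, and it shares the paper's central device: fix $\omega$ outside the union of the exceptional null sets from (a) and (e), use (e) to place $(\widehat C_n(\omega))$ in ${\cal S}$, observe that uniform quasi-Hadamard differentiability w.r.t.\ ${\cal S}$ trivially restricts to the singleton ${\cal S}_\omega:=\{(\widehat C_n(\omega))\}$, and feed the $\omega$-sections (whose $({\cal F}',\cdot)$-measurability follows from (a), (b) by the standard sectioning argument, cf.\ Remark \ref{modified delta method for the bootstrap - remark - 00}) into the appendix delta-method with $\widehat T_n^*(\omega,\cdot)$, $\widehat C_n(\omega)$, ${\cal S}_\omega$ in the roles of $\widehat T_n$, $\theta_n$, ${\cal S}$. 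Where you genuinely differ is in which part of Theorem \ref{modified delta method} you invoke. The paper does not apply part (i); it writes $a_n(H(\widehat T_n^*)-H(\widehat C_n))=S_1+S_2$ with $S_2:=\dot H_{\cal S}(a_n(\widehat T_n^*-\widehat C_n))$, obtains $S_2(\omega,\cdot)\leadsto^\circ\dot H_{\cal S}(\xi)$ from the ordinary Continuous Mapping theorem (this is what forces the extension of $\dot H_{\cal S}$ to all of $\bE$, i.e.\ condition (d)), obtains $S_1(\omega,\cdot)\rightarrow^{{\sf p},\circ}0_{\widetilde\bE}$ from part (ii) of Theorem \ref{modified delta method} (whose hypotheses comprise (d) and (f); (f) also yields measurability of $S_1(\omega,\cdot)$), and recombines via the Slutsky-type Corollary \ref{Slutzky corollary 2}. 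You instead apply part (i) (equivalently Theorem \ref{modified delta method for non bootstrap}) directly to the sections, obtaining (\ref{modified delta method for the bootstrap - assumption - 40}) in one stroke. Your route is leaner: part (i) needs only the sectionwise analogues of (a)--(c), so conditions (d) and (f) are never used for the convergence assertion; your sole use of (d) --- measurability of $\dot H_{\cal S}(\xi)$ --- can even be dropped in favour of the paper's own argument that $\dot H_{\cal S}$ is continuous on the separable $\bE_0$ and $\xi(\Omega_0)\subseteq\bE_0$. In effect your proof exposes (d) and (f) as artifacts of the paper's linearize-plus-Slutsky technique rather than as necessities for the stated conclusion. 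One small inaccuracy in your commentary, harmless for the argument itself: (f) is not needed to ``legitimise'' the difference appearing in (b), since that measurability is an assumption of the theorem; in the paper's proof (f) serves only to make the remainder $S_1(\omega,\cdot)$ measurable and to enable the Slutsky step.
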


\begin{remarknorm}\label{modified delta method for the bootstrap - remark - 00}
In condition (a) of Theorem \ref{modified delta method for the bootstrap} it is assumed that $a_n(\widehat T_n^* - \widehat C_n)$ is $(\overline{\cal F},{\cal B}^\circ)$-measurable for $\overline{\cal F}:={\cal F}\otimes{\cal F}'$. Thus the mapping $\omega'\mapsto a_n(\widehat T_n^*(\omega,\omega')-\widehat C_n(\omega))$ is $({\cal F}',{\cal B}^\circ)$-measurable for every fixed $\omega\in\Omega$. That is, $a_n(\widehat T_n^*(\omega,\cdot) - \widehat C_n(\omega))$ can be seen as an $(\bE,{\cal B}^\circ)$-valued random variable on $(\Omega',{\cal F}',\pr')$ for every fixed $\omega\in\Omega$, so that assertion (\ref{modified delta method for the bootstrap - assumption - 15}) makes sense. By the same line of reasoning one can regard $a_n(H(\widehat T_n^*(\omega,\cdot))-H(\widehat C_n(\omega)))$ as an $(\widetilde\bE,\widetilde{\cal B}^\circ)$-valued random variable on $(\Omega',{\cal F}',\pr')$ for every fixed $\omega\in\Omega$, so that also assertion (\ref{modified delta method for the bootstrap - assumption - 40}) makes sense.
{\hspace*{\fill}$\Diamond$\par\bigskip}
\end{remarknorm}

\begin{remarknorm}
Condition (c) in Theorem \ref{modified delta method for non bootstrap} (resp.\ Theorem \ref{modified delta method for the bootstrap}) assumes that the trace is given by $\widetilde\bE$, which implies that the first part of condition (b) in Theorem \ref{modified delta method for non bootstrap} (resp.\ Theorem \ref{modified delta method for the bootstrap}) is automatically satisfied.
{\hspace*{\fill}$\Diamond$\par\bigskip}
\end{remarknorm}

\begin{remarknorm}\label{modified delta method for the bootstrap - remark}
Condition (f) of Theorem \ref{modified delta method for the bootstrap} is automatically fulfilled when $(\widetilde\bE,\|\cdot\|_{\widetilde\bE})$ is separable. Indeed, in this case we have $\overline{\widetilde{\cal B}^\circ}=\widetilde{\cal B}^\circ\otimes\widetilde{\cal B}^\circ$ so that every continuous map $h:\overline{\widetilde\bE}\rightarrow\widetilde \bE$ (such as $h(\widetilde x_1,\widetilde x_2):=\widetilde x_1-\widetilde x_2$) is $(\overline{\widetilde{\cal B}^\circ},\widetilde{\cal B}^\circ)$-measurable.
{\hspace*{\fill}$\Diamond$\par\bigskip}
\end{remarknorm}

\begin{proof}{\bf of Theorem \ref{modified delta method for the bootstrap}}
First note that by the assumption imposed on $\xi$ (cf.\ assumption (a)) and assumption (c) the map $\dot H_{\cal S}(\xi)$ is $(\mathcal{F}_{0},\widetilde{\cal B}^{\circ})$-measurable. Next note that
\begin{eqnarray*}
    \lefteqn{a_n\big(H(\widehat T_n^*(\omega,\omega'))-H(\widehat C_n(\omega))\big)}\\
    & = &  \big\{a_n\big(H(\widehat T_n^*(\omega,\omega'))-H(\widehat C_n(\omega))\big)-\dot H_{\cal S}\big(a_n(\widehat T_n^*(\omega,\omega')-\widehat C_n(\omega))\big)\big\}\\
    & &  +\,\dot H_{\cal S}\big(a_n(\widehat T_n^*(\omega,\omega') - \widehat C_n(\omega))\big)\\
    & =: & S_1(\omega,\omega')+S_2(\omega,\omega').
\end{eqnarray*}
By (\ref{modified delta method for the bootstrap - assumption - 15}) in assumption (a) and the Continuous Mapping theorem in the form of \cite[Theorem 6.4]{Billingsley1999} (along with $\pr_0\circ\xi^{-1}[\bE_0]=1$ and the continuity of $\dot H_{\cal S}$), we have that $S_2(\omega,\cdot)\leadsto^\circ\dot H_{\cal S}(\xi)$ for $\pr$-a.e.\ $\omega$. Moreover, for every fixed $\omega$ we have that $\omega'\mapsto S_1(\omega,\omega')$ is $({\cal F}',\widetilde{\cal B}^\circ)$-measurable by assumption (f), and for $\pr$-a.e.\ $\omega$ we have
$$
     a_n\big(H_n(\widehat T_n^*(\omega,\cdot))-H_n(\widehat C_n(\omega))\big)-\dot H_{\cal S}\big(a_n(\widehat T_n^*(\omega,\omega') - \widehat C_n(\omega))\big)\,\rightarrow^{{\sf p},\circ}\,0_{\widetilde\bE}
$$
by part (ii) of Theorem \ref{modified delta method} (recall that $\widehat T_n^*$ was assumed to take values only in $\V_H$), where $\rightarrow^{{\sf p},\circ}$ refers to convergence in probability$^\circ$ (cf.\ Section \ref{Slutzky type results}) and $\widehat T_n^*(\omega,\cdot)$, $\widehat C_n(\omega)$, $\{(\widehat C_n(\omega))\}$ play the roles of $\widehat T_n(\cdot)$, $\theta_n$, ${\cal S}$, respectively. Hence, from Corollary \ref{Slutzky corollary 2} we get that (\ref{modified delta method for the bootstrap - assumption - 40}) holds.
\end{proof}


\section{Application to plug-in estimators of statistical functionals}\label{application to statistical functionals}

Let $\bD$ be the space of all \cadlag\ functions $v$ on $\R$ with finite sup-norm $\|v\|_\infty:=\sup_{t\in\R}|v(t)|$, and ${\cal D}$ be the $\sigma$-algebra on $\bD$ generated by the one-dimensional coordinate projections $\pi_t$, $t\in\R$, given by $\pi_t(v):=v(t)$. Let $\phi:\R\rightarrow[1,\infty)$ be a weight function, i.e.\ a continuous function being non-increasing  on $(-\infty,0]$ and non-decreasing on $[0,\infty)$. Let $\bD_{\phi}$ be the subspace of $\bD$ consisting of all $x\in\bD$ satisfying $\|x\|_\phi:=\|x\phi\|_\infty<\infty$ and $\lim_{|t|\to\infty} |x(t)|=0$. The latter condition automatically holds when $\lim_{|t|\to\infty} \phi(t)=\infty$. Let ${\cal D}_{\phi}:={\cal D}\cap\bD_{\phi}$ be the trace $\sigma$-algebra on $\bD_{\phi}$. The $\sigma$-algebra on $\bD_{\phi}$ generated by the $\|\cdot\|_\phi$-open balls will be denoted by ${\cal B}_{\phi}^\circ$. Lemma 4.1 in \cite{BeutnerZaehle2016} shows that it coincides with ${\cal D}_{\phi}$.

Let $\bC_{\phi}\subseteq\bD_{\phi}$ be a $\|\cdot\|_\phi$-separable subspace and assume $\bC_{\phi}\in{\cal D}_{\phi}$. Moreover, let $H:\bD(H)\rightarrow\widetilde\V$ be a map defined on a set $\bD(H)$ of distribution functions of finite (not necessarily probability) Borel measures on $\R$, where $\widetilde\V$ is any vector space. In particular, $\bD(H)\subseteq\bD$. In the following, $\bD$, $(\bD_{\phi},{\cal B}_{\phi}^\circ,\|\cdot\|_\phi)$, $\bC_{\phi}$, and $\bD(H)$ will play the roles of $\V$, $(\bE,{\cal B}^{\circ},\|\cdot\|_{\bE})$, $\bE_0$, and $\V_H$, respectively. As before we let $(\widetilde\bE,\|\cdot\|_{\widetilde\bE})$ be a normed subspace of $\widetilde\V$ equipped with the corresponding open-ball $\sigma$-algebra $\widetilde{\cal B}^\circ$.

Let $(\Omega,{\cal F},\pr)$ be a probability space. Let $(F_n)\subseteq\bD(H)$ be any sequence and $(X_i)$ be a sequence of real-valued random variables on $(\Omega,{\cal F},\pr)$. Moreover let $\widehat F_n:\Omega\rightarrow\bD$ be the empirical distribution function of $X_1,\ldots,X_n$, which will play the role of $\widehat T_n$. It is defined by
\begin{equation}\label{Def EmpDF}
    \widehat F_n:=\frac{1}{n}\sum_{i=1}^n\eins_{[X_i,\infty)}.
\end{equation}
Assume that $\widehat F_n$ takes values only in $\bD(H)$. Let $(\Omega',{\cal F}',\pr')$ be another probability space and set $(\overline\Omega,\overline{\cal F},\overline{\pr}):=(\Omega\times\Omega',{\cal F}\otimes{\cal F}',\pr\otimes\pr')$. Moreover, let $\widehat F_n^*:\overline{\Omega}\rightarrow\bD$ be any map. Assume that $\widehat F_n^*$ take values only in $\bD(H)$. Furthermore, let $\widehat C_n: \Omega \rightarrow \bD$ be any map that takes values only in $\bD(H)$. In the present setting Theorems \ref{modified delta method for non bootstrap} and \ref{modified delta method for the bootstrap} can be reformulated as follows, where we recall from Remark \ref{modified delta method for the bootstrap - remark} that condition (f) of Theorem \ref{modified delta method for the bootstrap} is automatically fulfilled when $(\widetilde\bE,\|\cdot\|_{\widetilde\bE})$ is separable.

\begin{corollary}\label{modified delta method for non bootstrap - II}
Let $(F_n)$  be a sequence in $\bD(H)$ and ${\cal S}:=\{(F_n)\}$. Let $(a_n)$ be a sequence of positive real numbers with $a_n\to\infty$, and assume that the following assertions hold:
\begin{itemize}
    \item[(a)] $a_n(\widehat F_n-F_n)$ takes values only in $\bD_{\phi}$ and satisfies 
    \begin{equation}\label{modified delta method for the bootstrap - assumption - 10 - II}
             a_n(\widehat F_n-F_n)\,\leadsto^\circ\,B\qquad\mbox{in $(\bD_{\phi},{\cal B}_{\phi}^\circ,\|\cdot\|_{\phi})$}
    \end{equation}
    for some $(\bD_{\phi},{\cal B}_{\phi}^\circ)$-valued random variable $B$ on some probability space $(\Omega_0,{\cal F}_0,\pr_0)$ with $B(\Omega_0)\subseteq\bC_{\phi}$.
    \item[(b)] $a_n(H(\widehat F_n)-H(F_n))$ takes values only in $\widetilde\bE$ and is $({\cal F},\widetilde{\cal B}^\circ)$-measurable.
    \item[(c)] $H$ is uniformly quasi-Hadamard differentiable w.r.t.\ ${\cal S}$ tangentially to $\bC_{\phi}\langle\bD_{\phi}\rangle$ with trace $\widetilde\bE$ and uniform quasi-Hadamard derivative $\dot H_{\cal S}$.
\end{itemize}
Then $\dot H_{\cal S}(B)$ is $({\cal F}_0,\widetilde{\cal B}^\circ)$-measurable and
$$
    a_n\big(H(\widehat F_n)-H(F_n)\big)\,\leadsto^\circ\,\dot H_{\cal S}(B)\qquad\mbox{in $(\widetilde\bE,\widetilde{\cal B}^\circ,\|\cdot\|_{{\widetilde\bE}})$}.
$$
\end{corollary}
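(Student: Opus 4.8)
The plan is to obtain Corollary~\ref{modified delta method for non bootstrap - II} as the specialization of Theorem~\ref{modified delta method for non bootstrap} to the present concrete setting. Following the identification made just before the statement, I would let $\bD$, $(\bD_{\phi},{\cal B}_{\phi}^\circ,\|\cdot\|_\phi)$, $\bC_{\phi}$, and $\bD(H)$ play the roles of $\V$, $(\bE,{\cal B}^{\circ},\|\cdot\|_{\bE})$, $\bE_0$, and $\V_H$, let $\widehat F_n$ play the role of $\widehat T_n$, let $(F_n)$ play the role of $(\theta_n)$, and let $B$ play the role of $\xi$. With these identifications, conditions (b) and (c) of the corollary are verbatim conditions (b) and (c) of the theorem, and the convergence~(\ref{modified delta method for the bootstrap - assumption - 10 - II}) together with $B(\Omega_0)\subseteq\bC_{\phi}$ is exactly the distributional part of the theorem's condition (a). Hence it remains only to verify the structural hypotheses on $\bE_0$ and the measurability clause hidden in condition (a) of the theorem, neither of which is posited verbatim in the corollary.

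The hypotheses on $\bE_0$ are immediate: $\bC_{\phi}$ is assumed to be a $\|\cdot\|_\phi$-separable subspace, and the requirement $\bC_{\phi}\in{\cal B}_{\phi}^\circ$ follows from the standing assumption $\bC_{\phi}\in{\cal D}_{\phi}$ together with the identity ${\cal B}_{\phi}^\circ={\cal D}_{\phi}$ established in Lemma~4.1 of \cite{BeutnerZaehle2016}.

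The main point---and the only step requiring an argument beyond bookkeeping---is to supply the measurability clause of the theorem's condition (a), namely that $a_n(\widehat F_n-F_n)$ is $({\cal F},{\cal B}_{\phi}^\circ)$-measurable; the corollary only posits that this map takes values in $\bD_{\phi}$. I would establish this coordinatewise. For every fixed $t\in\R$,
$$
    \pi_t\big(a_n(\widehat F_n-F_n)\big)
    = a_n\Big(\frac1n\sum_{i=1}^n\eins_{[X_i,\infty)}(t)-F_n(t)\Big)
$$
is an ${\cal F}$-measurable function of $\omega$, because each $X_i$ is a real random variable and $F_n(t)$ is a constant. Since ${\cal D}$ is generated by the coordinate projections $\pi_t$, $t\in\R$, this shows that $a_n(\widehat F_n-F_n)$ is $({\cal F},{\cal D})$-measurable as a $\bD$-valued map. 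As it takes values in $\bD_{\phi}$ by hypothesis, it is $({\cal F},{\cal D}_{\phi})$-measurable, and invoking ${\cal D}_{\phi}={\cal B}_{\phi}^\circ$ once more yields the required $({\cal F},{\cal B}_{\phi}^\circ)$-measurability.

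With all hypotheses of Theorem~\ref{modified delta method for non bootstrap} thereby verified, its conclusion applies directly and gives both the $({\cal F}_0,\widetilde{\cal B}^\circ)$-measurability of $\dot H_{\cal S}(B)$ and the asserted convergence in distribution$^\circ$ of $a_n(H(\widehat F_n)-H(F_n))$ to $\dot H_{\cal S}(B)$. The only genuine obstacle is the measurability verification just described; everything else is a matching of notation, so I expect the proof to be short.
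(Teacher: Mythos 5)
Your proposal is correct and follows essentially the same route as the paper: the paper also obtains the corollary by specializing Theorem \ref{modified delta method for non bootstrap} under the stated identifications and then notes (in the paragraph following the corollary) that the only hypothesis not posited verbatim, the $({\cal F},{\cal B}_{\phi}^\circ)$-measurability of $a_n(\widehat F_n-F_n)$, holds automatically because this map is $({\cal F},{\cal D}_{\phi})$-measurable and ${\cal B}_{\phi}^\circ={\cal D}_{\phi}$ by Lemma 4.1 of \cite{BeutnerZaehle2016}. Your coordinatewise argument simply fills in the detail the paper labels ``easily seen,'' so there is nothing further to add.
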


Note that the measurability assumption in condition (a) of Theorem \ref{modified delta method for non bootstrap} is automatically satisfied in the present setting (and is therefore omitted in condition (a) of Corollary \ref{modified delta method for non bootstrap - II}). Indeed, $a_n(\widehat F_n-F)$ is $({\cal F},{\cal B}_{\phi}^\circ)$-measurable, because it is easily seen to be $({\cal F},{\cal D}_{\phi})$-measurable and we have noted above that ${\cal B}_{\phi}^\circ={\cal D}_{\phi}$.

\begin{corollary}\label{modified delta method for the bootstrap - II}
Let ${\cal S}$ be any set of sequences in $\bD(H)$. Let $(a_n)$ be a sequence of positive real numbers with $a_n\to\infty$, and assume that the following assertions hold:
\begin{itemize}
    \item[(a)] $a_n(\widehat F_n^{*} - \widehat C_n)$ takes values only in $\bD_{\phi}$, is $(\overline{\cal F},{\cal B}_{\phi}^\circ)$-measurable, and
    \begin{equation}\label{modified delta method for the bootstrap - assumption - 15 - II}
            a_n\big(\widehat F_n^{*}(\omega,\cdot)-\widehat C_n(\omega)\big)\,\leadsto^\circ\,B\qquad\mbox{in $(\bD_{\phi},{\cal B}_{\phi}^\circ,\|\cdot\|_{\phi})$},\qquad\mbox{$\pr$-a.e.\ $\omega$}
    \end{equation}
    for some $(\bD_{\phi},{\cal B}_{\phi}^\circ)$-valued random variable $B$ on some probability space $(\Omega_0,{\cal F}_0,\pr_0)$ with $B(\Omega_0)\subseteq\bC_{\phi}$.
    \item[(b)] $a_n(H(\widehat F_{n}^*)-H(\widehat C_n))$ takes values only in $\widetilde\bE$ and is $(\overline{\cal F},\widetilde{\cal B}^\circ)$-measurable.
    \item[(c)] $H$ is uniformly quasi-Hadamard differentiable w.r.t.\ ${\cal S}$ tangentially to $\bC_{\phi}\langle\bD_{\phi}\rangle$ with trace $\widetilde\bE$ and uniform quasi-Hadamard derivative $\dot H_{\cal S}$.
    \item[(d)] The uniform quasi-Hadamard derivative $\dot H_{\cal S}$ can be extended from $\bC_{\phi}$ to $\bD_{\phi}$ such that the extension $\dot H_{\cal S}:\bD_{\phi}\rightarrow\widetilde\bE$ is $({\cal B}_{\phi}^\circ,\widetilde{\cal B}^\circ)$-measurable, and continuous at every point of $\bC_{\phi}$.
    \item[(e)] $(\widehat C_n(\omega))\in{\cal S}$ for $\pr$-a.e.\ $\omega$.
    \item[(f)] The map $h:\overline{\widetilde\bE}\rightarrow\widetilde\bE$ defined by $h(\widetilde x_1,\widetilde x_2):=\widetilde x_1-\widetilde x_2$ is $(\overline{\widetilde{\cal B}^\circ},\widetilde{\cal B}^\circ)$-measurable.
\end{itemize}
Then $\dot H_{\cal S}(B)$ is $({\cal F}_0,\widetilde{\cal B}^\circ)$-measurable and
$$
    a_n\big(H(\widehat F_n^*(\omega,\cdot))-H(\widehat C_n(\omega))\big)\,\leadsto^\circ \,\dot H_{\cal S}(B)\qquad\mbox{in $(\widetilde\bE,\widetilde{\cal B}^\circ,\|\cdot\|_{{\widetilde\bE}})$},\qquad\mbox{$\pr$-a.e.\ $\omega$}.
$$
\end{corollary}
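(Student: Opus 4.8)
The plan is to obtain Corollary \ref{modified delta method for the bootstrap - II} as a direct specialization of Theorem \ref{modified delta method for the bootstrap}, so that the work reduces to checking that the present distribution-function setting is an instance of the abstract one. First I would record the identifications announced above the statement: $\bD$, $(\bD_{\phi},{\cal B}_{\phi}^\circ,\|\cdot\|_\phi)$, $\bC_{\phi}$, and $\bD(H)$ take over the roles of $\V$, $(\bE,{\cal B}^\circ,\|\cdot\|_\bE)$, $\bE_0$, and $\V_H$, while $\widehat F_n$, $\widehat F_n^*$, and $\widehat C_n$ play the roles of $\widehat T_n$, $\widehat T_n^*$, and $\widehat C_n$. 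By the standing assumptions all three of these maps take values only in $\bD(H)=\V_H$, as the theorem requires.

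Next I would verify the structural hypotheses on the pair $\bE_0\subseteq\bE$. Here $\bE=\bD_{\phi}$ is by construction a normed subspace of $\bD$ equipped with $\|\cdot\|_\phi$, and $\bE_0=\bC_{\phi}$ is assumed to be a $\|\cdot\|_\phi$-separable subspace with $\bC_{\phi}\in{\cal D}_{\phi}$. The one point that is not purely verbal is the requirement $\bE_0\in{\cal B}^\circ$, that is $\bC_{\phi}\in{\cal B}_{\phi}^\circ$; this follows by combining the assumption $\bC_{\phi}\in{\cal D}_{\phi}$ with the identity ${\cal B}_{\phi}^\circ={\cal D}_{\phi}$ recorded via Lemma 4.1 of \cite{BeutnerZaehle2016}. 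Since ${\cal B}_{\phi}^\circ$ is by definition the open-ball $\sigma$-algebra on $\bD_{\phi}$, it correctly plays the role of ${\cal B}^\circ$.

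It then remains to observe that conditions (a)--(f) of the corollary are, after these substitutions, exactly conditions (a)--(f) of Theorem \ref{modified delta method for the bootstrap}, with $B$ in place of $\xi$ and $\bC_{\phi}$ in place of $\bE_0$ as the support of the limit. In particular, condition (a) already carries the $(\overline{\cal F},{\cal B}_{\phi}^\circ)$-measurability and the $\pr$-a.e.\ convergence in distribution$^\circ$ demanded by the theorem; conditions (b), (c), (d), (e) transcribe verbatim; and condition (f) is the same measurability requirement on the difference map $h$ (which, as noted before the statement, is automatic once $(\widetilde\bE,\|\cdot\|_{\widetilde\bE})$ is separable). Applying Theorem \ref{modified delta method for the bootstrap} then yields both the $({\cal F}_0,\widetilde{\cal B}^\circ)$-measurability of $\dot H_{\cal S}(B)$ and the asserted convergence in distribution$^\circ$ for $\pr$-a.e.\ $\omega$.

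I do not expect a genuine obstacle here: the corollary is essentially a re-reading of the abstract theorem in the empirical-distribution-function framework, and the only step that invokes an external fact rather than a direct matching of hypotheses is the membership $\bC_{\phi}\in{\cal B}_{\phi}^\circ$, which rests on the coincidence ${\cal B}_{\phi}^\circ={\cal D}_{\phi}$ from Lemma 4.1 of \cite{BeutnerZaehle2016}.
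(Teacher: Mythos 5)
Your proposal is correct and coincides with the paper's own treatment: the paper gives no separate proof of Corollary \ref{modified delta method for the bootstrap - II}, presenting it as a direct reformulation of Theorem \ref{modified delta method for the bootstrap} under exactly the identifications you list, with conditions (a)--(f) transcribing verbatim. The one substantive point you flag — that $\bC_{\phi}\in{\cal B}_{\phi}^\circ$ follows from the standing assumption $\bC_{\phi}\in{\cal D}_{\phi}$ together with the identity ${\cal B}_{\phi}^\circ={\cal D}_{\phi}$ from Lemma 4.1 of \cite{BeutnerZaehle2016} — is precisely the fact the paper records in the setup preceding the corollaries, so nothing is missing.
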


The following two examples illustrate $\widehat F_n^*$ and $\widehat C_n$. In S1.\ and S2.\ in the first example, i.e.\ Example \ref{iid bootstrap}, we have $\widehat C_n=\widehat{F}_n$, and in S3.\ of this example as well as in the second example, i.e.~Example \ref{block bootstrap}, $\widehat C_n$ may differ from $\widehat{F}_n$. Examples for uniformly quasi-Hadamard differentiable functionals $H$ can be found in Section \ref{Examples}. In the examples in Sections \ref{Average Value at Risk functional} and \ref{Composition functional} we have $\widetilde\V=\widetilde\bE=\R$, and in the Example in Section \ref{Convolution functional} we have $\widetilde\V=\bD$ and $\widetilde\bE=\bD_\phi$ for some $\phi$.

\begin{examplenorm}\label{iid bootstrap}
Let $(X_i)$ be a sequence of i.i.d.\ real-valued random variables on $(\Omega,{\cal F},\pr)$ with distribution function $F$, and $\widehat F_n$ be given by (\ref{Def EmpDF}). Let $(W_{ni})$ be a triangular array of nonnegative real-valued random variables on $(\Omega',{\cal F}',\pr')$ such that $(W_{n1},\ldots,W_{nn})$ is an exchangeable random vector for every $n\in\N$, and define the map $\widehat F_n^*:\overline\Omega\rightarrow\bD$ by
\begin{equation}\label{Def F star}
    \widehat F_n^*(\omega,\omega'):=\frac{1}{n} \sum_{i=1}^n W_{ni} (\omega')\,\eins_{[X_i(\omega),\infty)}.
\end{equation}
Note that the sequence $(X_i)$ and the triangular array $(W_{ni})$ regarded as families of random variables on the product space $(\overline\Omega,\overline{\cal F},\overline\pr):=(\Omega\times\Omega',{\cal F}\otimes{\cal F}',\pr\otimes\pr')$ are independent. Of course, we will tacitly assume that $(\Omega',{\cal F}',\pr')$ is rich enough to host all the random variables used below. Similar as in Lemma 5.1 of \cite{BeutnerZaehle2016} it can be shown that $a_n(\widehat F_n^*-\widehat{C}_n)$, with $\widehat{C}_n:= \overline{W}_n \widehat{F}_n$, takes values only in $\bD_{\phi}$ and is $(\overline{\cal F},{\cal D}_{\phi})$-measurable, where $\overline{W}_n:=\frac{1}{n}\sum_{i=1}^nW_{ni}$. That is, the first part of condition (a) of Corollary \ref{modified delta method for the bootstrap - II} holds true for $\widehat C_n:=\overline{W}_n\widehat F_n$. Now assume that $F$ satisfies $\int\phi^2dF<\infty$ and that the following three assertions hold.
\begin{itemize}
    \item[A1.] $\sup_{n\in\N}\int_0^\infty\pr'[|W_{n1}-\overline{W}_n|>t]^{1/2}\,dt<\infty$.
    \item[A2.] $\frac{1}{\sqrt{n}}\,\ex'\big[\max_{1\le i\le n}|W_{ni}-\overline{W_n}|\big]\rightarrow 0$.
    \item[A3.] $\frac{1}{n}\sum_{i=1}^n(W_{ni}-\overline{W}_n)^2\rightarrow 1$ in $\pr'$-probability.
\end{itemize}
Then, arguing as in Example 4.3 and Section 5.1 of \cite{BeutnerZaehle2016}, results in \cite{ShorackWellner1986} and \cite{van der Vaart Wellner 1996} imply that respectively condition (a) of Corollary \ref{modified delta method for non bootstrap - II} (with $F_n:=F$) and the second part of condition (a) of Corollary \ref{modified delta method for the bootstrap - II} (with $\widehat C_n:=\overline{W}_n\widehat F_n$) hold for $a_n:=\sqrt{n}$ and $B:=B_F$, where $B_F$ is an $F$-Brownian bridge, i.e.\ a centered Gaussian process with covariance function $\Gamma(t_0,t_1)=F(t_0\wedge t_1)\overline{F}(t_0\vee t_1)$. Here $\bC_{\phi}$ can be chosen to be the set $\bC_{\phi,F}$ of all $v\in\bD_\phi$ whose discontinuities are also discontinuities of $F$.

Examples 3.6.9, 3.6.10, and 3.6.12 in \cite{van der Vaart Wellner 1996} show that conditions A1.--A3.\ are satisfied if one of the following three specific settings is met:
\begin{itemize}
    \item[S1.] 
    The random vector $(W_{n1},\ldots,W_{nn})$ is multinomially distributed according to the parameters $n$ and $p_1=\cdots=p_n=\frac{1}{n}$ for every $n\in\N$.
    \item[S2.] 
    $W_{ni}=Y_i/\overline{Y}_n$ for every $i=1,\ldots,n$ and $n\in\N$, where $\overline{Y}_n:=\frac{1}{n}\sum_{j=1}^nY_j$ and $(Y_j)$ is any sequence of nonnegative i.i.d.\ random variables on $(\Omega',{\cal F}',\pr')$ with $\int_0^\infty\pr'[Y_{1}>t]^{1/2}\,dt<\infty$ and $\vari'[Y_1]^{1/2}=\ex'[Y_1]>0$.
    \item[S3.] 
    $W_{ni}=Y_i$ for every $i=1,\ldots,n$ and $n\in\N$, where $(Y_i)$ is any sequence of non-negative i.i.d.\ random variables on $(\Omega',{\cal F}',\pr')$ with $\int_0^\infty\pr'[Y_{1}>t]^{1/2}\,dt<\infty$ and $\vari'[Y_{1}]=1$.
\end{itemize}
Setting S1.\ is nothing but {\em Efron's boostrap} \cite{Efron1979} and Setting S3.\ is sometimes referred to as {\em wild bootstrap}. If in Setting S2.\ the distribution of $Z_1$ is the exponential distribution with parameter $1$, then the resulting scheme is in line with the {\em Bayesian bootstrap} of Rubin \cite{Rubin1981}. Note that in Settings S1.\ and S2.\ we have $\overline{W}_n=1$ and thus $\widehat C_n=\widehat F_n$. This implies that condition (e) holds if ${\cal S}$ is (any subset of) the set of all sequences $(G_n)$ of distribution functions on $\R$ satisfying $G_n-F\in\bD_\phi$, $n\in\N$, and $\|G_n-F\|_\phi\to 0$; see, for instance, Theorem 2.1 in \cite{Zaehle2014}.
{\hspace*{\fill}$\Diamond$\par\bigskip}
\end{examplenorm}

\begin{examplenorm}\label{block bootstrap}
Let $(X_i)$ be a strictly stationary sequence of $\beta$-mixing random variables on $(\Omega,{\cal F},\pr)$ with distribution function $F$, and $\widehat F_n$ be given by (\ref{Def EmpDF}). Let $(\ell_n)$ be a sequence of integers such that $\ell_n\nearrow\infty$ as $n\rightarrow\infty$, and $\ell_n<n$ for all $n\in\N$. Set $k_n:=\lceil n/\ell_n\rceil$ for all $n\in\N$. Let $(I_{nj})_{n\in\N,\,1\le j\le k_n}$ be a triangular array of random variables on $(\Omega',{\cal F}',\pr')$ such that $I_{n1},\ldots,I_{nk_n}$ are i.i.d.\ according to the uniform distribution on $\{1,\ldots,n-\ell_n+1\}$ for every $n\in\N$. Define the map $\widehat F_n^*:\overline\Omega\rightarrow\bD$ by (\ref{Def F star}) with
\begin{equation}\label{example for tau - beta mixing - 2}
    W_{ni}(\omega'):=\sum_{j=1}^{k_n-1}\eins_{\{I_{nj}\le i\le I_{nj}+\ell_n-1\}}(\omega')+\eins_{\{I_{nk_n}\le i\le I_{nk_n}+(n-(k_n-1)\ell_n)-1\}}(\omega').
\end{equation}
Note that, as before, the sequence $(X_i)$ and the triangular array $(W_{ni})$ regarded as families of random variables on the product space $(\overline\Omega,\overline{\cal F},\overline\pr):=(\Omega\times\Omega',{\cal F}\otimes{\cal F}',\pr\otimes\pr')$ are independent. At an informal level this means that given a sample $X_1,\ldots,X_n$, we pick  $k_n-1$ blocks of length $\ell_n$ and one block of length $n-(k_n-1)\ell_n$ in the sample $X_1,\ldots,X_n$, where the start indices $I_{n1},I_{n2},\ldots,I_{nk_n}$ are chosen independently and uniformly in the set of indices $\{1,\ldots,n-\ell_n+1\}$:
\begin{center}
\begin{tabular}{ll}
    block $1$: \qquad & $X_{I_{n1}},X_{I_{n1}+1},\ldots,X_{I_{n1}+\ell_n-1}$\\
    block $2$: \qquad & $X_{I_{n2}},X_{I_{n2}+1},\ldots,X_{I_{n2}+\ell_n-1}$\\
    & $\vdots$\\
    block $k_{n}-1$: \qquad & $X_{I_{n(k_n-1)}},X_{I_{n(k_n-1)}+1},\ldots,X_{I_{n(k_n-1)}+\ell_n-1}$\\
    block $k_{n}$: \qquad & $X_{I_{nk_n}},X_{I_{nk_n}+1},\ldots,X_{I_{nk_n}+(n-(k_n-1)\ell_n)-1}$.
\end{tabular}
\end{center}
The bootstrapped empirical distribution function $\widehat F_n^*$ is then defined to be the distribution function of the discrete finite (not necessarily probability) measure with atoms $X_1,\ldots,X_n$ carrying masses $W_{n1},\ldots,W_{nn}$ respectively, where $W_{ni}$ specifies the number of blocks which contain $X_i$. Similar as in Lemma 5.3 in \cite{BeutnerZaehle2016} it follows that $a_n(\widehat F_n^*-\widehat{C}_n)$, with $\widehat{C}_n:=\E^{'}[\widehat F_n^*]$, takes values only in $\bD_{\phi}$ and is $(\overline{\cal F},{\cal D}_{\phi})$-measurable. That is, the first part of condition (a) of Corollary \ref{modified delta method for the bootstrap - II} holds true for $\widehat C_n:=\E^{'}[\widehat F_n^*]$. Now assume that the following assertions hold: 
\begin{itemize}
    \item[A1.] $\int\phi^p\,dF<\infty$ for some $p>4$.
    \item[A2.] The sequence of random variables $(X_i)$ is strictly stationary and $\beta$-mixing with mixing coefficients $(\beta_i)$ satisfying $\beta_i\le c\delta^i$ for some constants $c>0$ and $\delta\in(0,1)$.
    \item[A3.] The block length $\ell_n$ satisfies $\ell_n={\cal O}(n^{\gamma})$ for some $\gamma\in(0,1/2)$.
\end{itemize}
Then, as discussed in Example 4.4 and Section 5.2 of \cite{BeutnerZaehle2016}, it can be derived from a result in \cite{ArconesYu1994} that under assumptions A1.\ and A2.\ we have that condition (a) of Corollary \ref{modified delta method for non bootstrap - II} holds for $a_n:=\sqrt{n}$, $B:=B_F$, and $F_n:=F$, where $B_F$ is a centered Gaussian process with covariance function $\Gamma(t_0,t_1)=F(t_0\wedge t_1)(1-F(t_0\vee t_1))+\sum_{i=0}^1\sum_{k=2}^{\infty}\covi(\eins_{\{X_1 \le t_i\}}, \eins_{\{X_k \le t_{1-i}\}})$. Here $\bC_{\phi}$ can be chosen to be the set $\bC_{\phi,F}$ of all $v\in\bD_\phi$ whose discontinuities are also discontinuities of $F$. Moreover, Theorem \ref{corollary bracketing weight functions} below shows that under the assumptions A1.--A3.\ the second part of condition (a) (i.e.\ (\ref{modified delta method for the bootstrap - assumption - 15 - II})) and condition (e) of Corollary \ref{modified delta method for the bootstrap - II} hold for
\begin{equation}\label{def c n beta mixing case}
    \widehat C_n:=\E^{'}[\widehat F_n^*]=\frac{1}{n}\sum_{i=1}^nw_{ni}\eins_{[X_i,\infty)}\qquad\mbox{with}\qquad w_{ni}:=\ex'[W_{ni}]
\end{equation}
and the same choice of $a_n$, $B$, and $F_n$, when ${\cal S}$ is the set of all sequences $(G_n)\subseteq\bD(H)$ with $G_n-F\in\bD_\phi$, $n\in\N$, and $\|G_n-F\|_\phi\to 0$. Note that
\begin{equation}\label{values of w ni}
    w_{ni}
    =
    \left\{\begin{array}{lcl}
        k_n\,\frac{i}{n-\ell_n+1} & , & i=1,\ldots,n-(k_n-1)\ell_n\\
        (k_n-1)\,\frac{i}{n-\ell_n+1}+\frac{n-(k_n-1)\ell_n}{n-\ell_n+1} & , & i=n-(k_n-1)\ell_n+1,\ldots,\ell_n\\
        (k_n-1)\,\frac{\ell_n}{n-\ell_n+1} + \frac{n-(k_n-1)\ell_n}{n-\ell_n+1}=\frac{n}{n-\ell_n+1} & , & i=\ell_n+1,\ldots,n-\ell_n\\
        (k_n-1)\,\frac{n-i+1}{n-\ell_n+1}+\frac{2n-k_n\ell_n-i+1}{n-\ell_n+1} & , & i=n-\ell_n+1,\ldots,n-(k_n\ell_n-n)\\
        (k_n-1)\,\frac{n-i+1}{n-\ell_n+1} & , & i=n-(k_n\ell_n-n)+1,\ldots,n
    \end{array}
    \right.
\end{equation}
which can be verified easily.
{\hspace*{\fill}$\Diamond$\par\bigskip}
\end{examplenorm}

Further examples for condition (a) in Corollary \ref{modified delta method for the bootstrap - II} for dependent observations can, for example, be found in \cite{Buehlmann1994,NaikNimbalkarRajarshi1994,Peligrad1998}.

\begin{theorem}\label{corollary bracketing weight functions}
In the setting of Example \ref{block bootstrap} assume that assertions A1.--A3.\ hold, and let ${\cal S}$ be the set of all sequences $(G_n)\subseteq\bD(H)$ with $G_n-F\in\bD_\phi$, $n\in\N$, and $\|G_n-F\|_\phi\to 0$. Then the second part of assertion (a) (i.e.\ (\ref{modified delta method for the bootstrap - assumption - 15 - II})) and assertion (e) in Corollary \ref{modified delta method for the bootstrap - II} hold.
\end{theorem}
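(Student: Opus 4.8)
The plan is to verify the two claims of the theorem separately: condition (e), namely that $\widehat C_n(\omega)-F\in\bD_\phi$ for all $n$ and $\|\widehat C_n(\omega)-F\|_\phi\to0$ for $\pr$-a.e.\ $\omega$, and the conditional weak convergence (\ref{modified delta method for the bootstrap - assumption - 15 - II}). First I would record the consequences of A1--A3 that drive everything: A1 gives $\int\phi\,dF<\infty$ (hence $\phi(t)F(t)\to0$ as $t\to-\infty$ and $\phi(t)(1-F(t))\to0$ as $t\to+\infty$, using the monotonicity of $\phi$ on each half-line); A3 gives $\rho_n:=(\ell_n-1)/(n-\ell_n+1)={\cal O}(\ell_n/n)\to0$ and $\ell_n/\sqrt n={\cal O}(n^{\gamma-1/2})\to0$; and A2 guarantees ergodicity of $(X_i)$, so that the ergodic theorem and a weighted Glivenko--Cantelli theorem (in the spirit of Theorem 2.1 of \cite{Zaehle2014}) yield $\|\widehat F_n-F\|_\phi\to0$ $\pr$-a.e.

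For condition (e) I would use $\frac1n\sum_{i=1}^n w_{ni}=1$ (the total number of index--block incidences equals the sum of the block lengths, which is $n$) to write
$$
    \widehat C_n(t)-F(t)=\big(\widehat F_n(t)-F(t)\big)+\frac1n\sum_{i=1}^n(w_{ni}-1)\big(\eins_{\{X_i\le t\}}-F(t)\big).
$$
Membership $\widehat C_n-F\in\bD_\phi$ is then clear, because $\widehat C_n-F$ is \cadlag\ with finitely many jumps and equals $-F$ resp.\ $1-F$ beyond its atoms, so $\phi\,(\widehat C_n-F)$ is bounded and vanishes at $\pm\infty$ by the tail estimates above. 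For the convergence I would bound the second summand by splitting the index set into the bulk $\{\ell_n+1,\dots,n-\ell_n\}$, where $|w_{ni}-1|\le\rho_n$ by (\ref{values of w ni}), and the ${\cal O}(\ell_n)$ remaining boundary indices, where $|w_{ni}-1|$ is uniformly bounded. The bulk part is at most $\rho_n\sup_t\phi(t)\frac1n\sum_{i=1}^n|\eins_{\{X_i\le t\}}-F(t)|={\cal O}(\rho_n)$, the supremum being bounded $\pr$-a.e.\ by the weighted Glivenko--Cantelli property; the boundary part is at most a constant times $\frac1n\sum_{i\in\partial_n}\big(\phi(X_i)+c_0\big)$ (using $\sup_t\phi(t)|\eins_{\{x\le t\}}-F(t)|\le\phi(x)+c_0$ for a suitable constant $c_0$), which tends to $0$ $\pr$-a.e.\ by the ergodic theorem since $|\partial_n|={\cal O}(\ell_n)$ and $\ell_n/n\to0$. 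Hence $\|\widehat C_n(\omega)-F\|_\phi\to0$ $\pr$-a.e.

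For the conditional weak convergence I would rewrite $\widehat F_n^*-\widehat C_n=\frac1n\sum_i(W_{ni}-w_{ni})\eins_{[X_i,\infty)}$ and regroup by resampled blocks, obtaining for each fixed $\omega$
$$
    \sqrt n\big(\widehat F_n^*(\omega,\cdot)-\widehat C_n(\omega)\big)(t)=\frac1{\sqrt n}\sum_{j=1}^{k_n}\big(S_{nj}(\omega,t)-\ex'[S_{nj}(\omega,t)]\big),
$$
where $S_{nj}(\omega,t)=\sum_r\eins_{\{X_{I_{nj}+r}(\omega)\le t\}}$ is the $j$-th block's contribution; conditionally on $\omega$ these $k_n$ summands are independent because the $I_{nj}$ are i.i.d.\ uniform. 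I would then establish weak convergence to $B_F$ in the open-ball $\sigma$-algebra by verifying, for $\pr$-a.e.\ $\omega$, (i) finite-dimensional convergence and (ii) asymptotic $\phi$-equicontinuity. For (i) a conditional Lindeberg CLT applies: the Lindeberg condition holds deterministically because $|S_{nj}|\le\ell_n=o(\sqrt n)$ by A3, while the conditional covariances $\frac1n\sum_j\covi'(S_{nj}(\cdot,t_a),S_{nj}(\cdot,t_b))$ converge $\pr$-a.e.\ to $\Gamma(t_a,t_b)$ by an ergodic theorem for these block-covariance functionals under A1 and A2. For (ii) I would use a bracketing maximal inequality for sums of conditionally i.i.d.\ block sums applied to the class $\{\phi\,\eins_{(-\infty,t]}:t\in\R\}$, whose bracketing integral is finite under A1.

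The step I expect to be the main obstacle is upgrading the asymptotic $\phi$-equicontinuity (and the a.e.\ convergence of the conditional covariances) from the ``in $\pr$-probability'' form---which is essentially furnished by Example 4.4 and Section 5.2 of \cite{BeutnerZaehle2016} together with \cite{ArconesYu1994}---to the ``$\pr$-a.e.\ $\omega$'' form required here. My plan is to bound the conditional modulus of $\phi$-continuity by the bracketing argument and then make the resulting tail probabilities summable in $n$: the moment assumption A1 with $p>4$ supplies the high-order (fourth-moment-type) maximal inequalities for the block sums, and A3 with $\gamma<1/2$ both annihilates the Lindeberg remainder and controls the block count $k_n\asymp n/\ell_n$, so that a Borel--Cantelli argument---run along a polynomially growing subsequence and filled in by monotonicity of the empirical quantities---delivers the almost-everywhere statement. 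Measurability with respect to the open-ball $\sigma$-algebra (needed because $\bD_\phi$ is non-separable) would be handled as in the measurability parts of \cite{BeutnerZaehle2016}, consistently with what is already granted for $a_n(\widehat F_n^*-\widehat C_n)$ in Example \ref{block bootstrap}.
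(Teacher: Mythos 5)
The central gap is in the main part of the theorem, the $\pr$-a.e.\ conditional weak convergence (\ref{modified delta method for the bootstrap - assumption - 15 - II}). The paper does not prove this from scratch: it reduces it to Theorem 1 of \cite{Buehlmann1995} (B\"uhlmann's almost-sure uniform CLT for the blockwise bootstrap of stationary $\beta$-mixing sequences), noting that A2 and A3 are precisely his mixing and block-length conditions, so that the only remaining work is to verify his bracketing condition and envelope condition for the weighted class $\F_\phi=\F_\phi^-\cup\F_\phi^+$ with $f_x=\phi(x)\eins_{(-\infty,x]}$ for $x\le 0$ and $f_x=-\phi(x)\eins_{(x,\infty)}$ for $x>0$. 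The explicit $\varepsilon$-brackets are built from partitions determined by $\int\phi^p\,dF$ and, via integration by parts, $\int F\,d(-\phi^p)$, giving $N_{[~]}(\varepsilon,\F_\phi,\|\cdot\|_p)\le c\,\varepsilon^{-p}$, and the envelope is $\overline f=\phi$, which is $p$-integrable by A1. Your proposal replaces this citation by a plan to re-derive the almost-sure uniform CLT for the conditionally i.i.d.\ block sums (conditional Lindeberg CLT for the finite-dimensional distributions, bracketing maximal inequality for equicontinuity, Borel--Cantelli along a polynomial subsequence plus monotone interpolation). You yourself flag the almost-sure equicontinuity upgrade as ``the main obstacle'', and that step is exactly the content of B\"uhlmann's paper; asserting that a Borel--Cantelli/interpolation scheme ``delivers'' it is a research program, not a proof, and the interpolation between subsequence points is delicate here because the blocking scheme (hence the conditional law itself) changes with $n$. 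A proof along your lines would essentially have to reproduce \cite{Buehlmann1995}; the missing idea is that one can instead invoke that theorem and that the entire problem then collapses to a bracketing-number computation for $\F_\phi$.

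There is also a concrete technical flaw in your equicontinuity step: the class $\{\phi\,\eins_{(-\infty,t]}:t\in\R\}$ is not the right index class. Read as $z\mapsto\phi(z)\eins_{\{z\le t\}}$ it indexes the process $\int_{(-\infty,t]}\phi\,d(\widehat F_n^*-\widehat C_n)$, which is not the weighted process $\phi(t)\,(\widehat F_n^*-\widehat C_n)(t)$ whose supremum defines the $\|\cdot\|_\phi$-topology; read as $z\mapsto\phi(t)\eins_{\{z\le t\}}$ for all $t\in\R$, its envelope $\sup_{t\ge z}\phi(t)$ is infinite whenever $\phi$ is unbounded, so A1 cannot be brought to bear. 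The half-line split with complementary indicators $-\phi(x)\eins_{(x,\infty)}$ on $(0,\infty)$ is precisely what makes the envelope equal to $\phi$, and this device is absent from your argument. By contrast, your treatment of condition (e) is essentially correct and is a legitimate alternative to the paper's: you decompose $\widehat C_n-F=(\widehat F_n-F)+\frac1n\sum_{i=1}^n(w_{ni}-1)(\eins_{[X_i,\infty)}-F)$ (valid since $\frac1n\sum_iw_{ni}=1$), use the bulk/boundary bounds from (\ref{values of w ni}) and the ergodic theorem, and appeal to a weighted Glivenko--Cantelli theorem for $\widehat F_n$; the paper instead proves (e) directly through the same brackets as in the first part together with the strong law of \cite{Rio1995} for $\beta$-mixing sequences, which sidesteps the question of whether the weighted Glivenko--Cantelli theorem of \cite{Zaehle2014} applies beyond the i.i.d.\ setting (it does, since exponential $\beta$-mixing implies ergodicity, but that justification belongs in the proof).
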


\begin{proof}
{\em Proof of second part of (a):} It is enough to show that under assumptions A1.--A3.\ the assumptions (A1)--(A4) of Theorem 1 in \cite{Buehlmann1995} hold when the class of functions is $\F_{\phi}:=\F_{\phi}^-\cup\F_{\phi}^+$, where $\F_\phi^-:=\{f_x: x\le 0\}$ and $\F_\phi^+:=\{f_x: x>0\}$ with $f_x(\cdot):=\phi(x)\eins_{(-\infty,x]}(\cdot)$ for $x \leq 0$ and $f_x(\cdot):=-\phi(x)\eins_{(x,\infty)}(\cdot)$ for $x>0$. Due to A2.\ and A3.\ we only have to verify assumptions (A3) and (A4) of Theorem 1 in \cite{Buehlmann1995}. That is, we will show that the following two assertions hold.
\begin{itemize}
    \item[1)] There exist constants $b,c>0$ such that $N_{[~]}(\varepsilon,\F_\phi,\|\cdot\|_p)\le c\,\varepsilon^{-b}$ for all $\varepsilon>0$.
    \item[2)] $\int \overline{f}^{\,p} dF<\infty$ for the envelope function $\overline{f}(z):=\sup_{x\in\R}|f_x(z)|$.
\end{itemize}
Here the bracketing number $N_{[~]}(\varepsilon,\F_\phi,\|\cdot\|_p)$ is the minimal number of $\varepsilon$-brackets w.r.t.\ $\|\cdot\|_{p}$ ($L^p$-norm w.r.t.\ $dF$) to cover $\F_\phi$, where an $\varepsilon$-bracket w.r.t.\ $\|\cdot\|_{p}$ is the set, $[\ell,u]$,  of all functions $f$ with $\ell\le f\le u$ for some Borel measurable functions $\ell,u:\R\rightarrow\R_+$ with $\ell\le u$ pointwise and $\|u-\ell\|_p\le\varepsilon$.

1): We will only show that 1) with $\F_\phi$ replaced by $\F_\phi^-$ holds true. Analogously one can show that the same holds true for $\F_\phi^+$ (and therefore for $\F_\phi$). On the one hand, since $I_p^-:=\int_{(-\infty,0]} \phi^p\,dF<\infty$ by assumption (a), we can find for every $\varepsilon>0$ a finite partition $-\infty=y_0^\varepsilon<y_1^\varepsilon<\cdots<{y_{k_\varepsilon}^\varepsilon}=0$ such that
\begin{equation}\label{block bootstrap - proof - 10}
    \max_{i=1,\ldots,k_\varepsilon}\,\int_{(y_{i-1}^\varepsilon,y_i^\varepsilon]} \phi^p\,dF \le (\varepsilon/2)^p
\end{equation}
and $k_\varepsilon\le \lceil I_p^-/(\varepsilon/2)^p\rceil$. On the other hand, using integration by parts we obtain
$$
    \int_{(-\infty,0]} F\,d(-\phi^p)\,=\,\phi(0)F(0)-\int_{(-\infty,0]}(-\phi^p)\,dF\,=\,\phi(0)F(0)+I_p^-,
$$
so that we can find a finite partition $-\infty=z_0^\varepsilon<z_1^\varepsilon<\cdots<{z_{m_\varepsilon}^\varepsilon}=0$ such that
\begin{equation}\label{block bootstrap - proof - 20}
    \max_{i=1,\ldots,m_\varepsilon}\,\int_{(z_{i-1}^\varepsilon,z_i^\varepsilon]} F\,d(-\phi^p)\le (\varepsilon/2)^p
\end{equation}
and $m_\varepsilon\le \lceil (\phi(0)F(0)+I_p^-)/(\varepsilon/2)^p\rceil$.

Now let $-\infty=x_{0}^\varepsilon<x_{1}^\varepsilon<\cdots<x_{k_{\varepsilon}+m_{\varepsilon}}^\varepsilon=0$ be the partition consisting of all points $y_i^\varepsilon$ and $z_i^\varepsilon$, and set
\begin{eqnarray}
    \ell_{i}^\varepsilon(\cdot) & := & \phi(x_{i}^\varepsilon)\eins_{(-\infty,x_{i-1}^\varepsilon]}(\cdot),\nonumber\\
    u_{i}^\varepsilon(\cdot) & := & \phi(x_{i-1}^\varepsilon)\eins_{(-\infty,x_{i-1}^\varepsilon]}(\cdot)+\phi(\cdot)\eins_{(x_{i-1}^\varepsilon,x_{i}^\varepsilon]}(\cdot).\label{def epsilon bracket}
\end{eqnarray}
Then $\ell_{i}^\varepsilon\le u_{i}^\varepsilon$. Moreover
\begin{eqnarray*}
    \|u_{i}^\varepsilon-\ell_{i}^\varepsilon\|_p
    & = & \Big(\int \big(u_{i}^\varepsilon-\ell_{i}^\varepsilon\big)^p\,dF\Big)^{1/p}\\
    & \le & \Big(\int_{(-\infty,x_{i-1}^{\varepsilon}]} \big(\phi(x_{i-1}^\varepsilon)-\phi(x_{i}^\varepsilon)\big)^p\,dF\Big)^{1/p}+\Big(\int_{(x_{i-1}^\varepsilon,x_{i}^\varepsilon]}\phi^p\,dF\Big)^{1/p}\\
    & \le & \Big(\int_{(-\infty,x_{i-1}^{\varepsilon}]} \big(\phi(x_{i-1}^\varepsilon)^p-\phi(x_{i}^\varepsilon)^p\big)\,dF\Big)^{1/p}+\varepsilon/2\\
    & \le & \Big(\big(\phi(x_{i-1}^\varepsilon)^p-\phi(x_{i}^\varepsilon)^p\big)F(x_{i-1}^\varepsilon)\Big)^{1/p}+\varepsilon/2
\end{eqnarray*}
where we used Minkovski's inequality and (\ref{block bootstrap - proof - 10}), and that $\phi$ is non-increasing on $(-\infty,0]$ and $x_{i-1}^\varepsilon\le x_{i}^\varepsilon$. Since $F$ is at least $F(x_{i-1}^\varepsilon)$ on $(x_{i-1}^\varepsilon,x_{i}^\varepsilon]$, we have
$$
    \big(\phi(x_{i-1}^\varepsilon)^p-\phi(x_{i}^\varepsilon)^p\big)F(x_{i-1}^\varepsilon)\le\int_{(x_{i-1}^\varepsilon,x_{i}^\varepsilon]}F\,d(-\phi^p)\le(\varepsilon/2)^p
$$
due to (\ref{block bootstrap - proof - 20}). Thus $\|u_{i}^\varepsilon-\ell_{i}^\varepsilon\|_p\le\varepsilon$, so that $[\ell_{i}^\varepsilon,u_{i}^\varepsilon]$ provides an $\varepsilon$-bracket w.r.t.\ $\|\cdot\|_p$. It is moreover obvious that the $\varepsilon$-brackets $[\ell_{i}^\varepsilon,u_{i}^\varepsilon]$, $i=1,\ldots,k_{\varepsilon}+m_{\varepsilon}$, cover $\F_\phi^-$. Thus, $N_{[~]}(\varepsilon,\F_\phi^-,\|\cdot\|_p)\le c\,\varepsilon^{-p}$ for a suitable constant $c>0$ and all $\varepsilon>0$.

2): The envelope function $\overline{f}$ is given by $\overline{f}(y)=\phi(y)$ for $y\leq 0$ and by $\overline{f}(y)=\phi(y-)=\phi(y)$ (recall that $\phi$ is continuous) for $y>0$. Then under assumption (a) the integrability condition 2) holds.

{\em Proof of (e):} We have to show that $\|\widehat C_n-F\|_\phi=\sup_{x\in\R}|\widehat C_n(x)-F(x)|\phi(x)\to 0$ $\pr$-a.s. We will only show that
\begin{equation}\label{block bootstrap - proof - 30}
    \sup_{x\in(-\infty,0]}|\widehat C_n(x)-F(x)|\phi(x)\longrightarrow 0\qquad\mbox{$\pr$-a.s.},
\end{equation}
because the analogue for the positive real line can be shown in the same way. Let $\ell_{i}^\varepsilon$ and $u_{i}^\varepsilon$ be as defined in (\ref{def epsilon bracket}). By assumption A1.\ we have $\int\phi\,dF<\infty$, so that similar as above we can find a finite partition $-\infty={x_0^\varepsilon}<{x_1^\varepsilon}<\cdots<{x_{k_\varepsilon+m_\varepsilon}^\varepsilon}=0$ such that $[\ell_i^\varepsilon,u_i^\varepsilon]$, $i=1,\ldots,k_\varepsilon+m_\varepsilon$, are $\varepsilon$-brackets w.r.t.\ $\|\cdot\|_1$ ($L^1$-norm w.r.t.\ $F$) covering the class $\F_\phi:=\{f_x:x\in\R\}$ introduced above. We will proceed in two steps.

{\em Step 1}. First we will show that
\begin{equation}\label{appendix a2 - eq 1}
    \sup_{x\le 0}\big|\widehat C_n(x)-F(x)\big|\phi(x)
    \,\le\,\max_{i=1,\ldots,k_\varepsilon+m_\varepsilon}\max\Big\{\int u_i^\varepsilon\,d(\widehat C_n-F)\,;\,\int \ell_i^\varepsilon\,d(F-\widehat C_n)\Big\}+\varepsilon
\end{equation}
holds true for every $\varepsilon>0$. Since $(\widehat C_n(x)-F(x))\phi(x)= \int f_x\,d\widehat C_n-\int f_x\,dF$, for (\ref{appendix a2 - eq 1}) it suffices to show
\begin{eqnarray}
    \lefteqn{\sup_{x\le 0}\Big|\int f_x\,d\widehat C_n-\int f_x\,dF\Big|}\nonumber\\
    & \le & \max_{i=1,\ldots,k_\varepsilon+m_\varepsilon}\max\Big\{\int u_i^\varepsilon\,d(\widehat C_n-F)\,;\,\int\ell_i^\varepsilon\,d(F-\widehat C_n)\Big\}+\varepsilon.
    \label{appendix a2 - eq 1 - alt}
\end{eqnarray}
To prove (\ref{appendix a2 - eq 1 - alt}), we note that for every $x\in(-\infty,y]$ there is some $i_x\in\{1,\ldots,k_\varepsilon+m_\varepsilon\}$ such that $f_x\in[\ell_{i_x}^\varepsilon,u_{i_x}^\varepsilon]$; cf.\ Step 1. Therefore, since $[\ell_{i_x}^\varepsilon,u_{i_x}^\varepsilon]$ is an $\varepsilon$-bracket w.r.t.\ $\|\cdot\|_1$,
\begin{eqnarray*}
    \int f_x\,d\widehat C_n-\int f_x\,dF
    & \le & \int u_{i_x}^\varepsilon\,d\widehat C_n-\int f_x\,dF\\
    & = & \int u_{i_x}^\varepsilon\,d(\widehat C_n-F)+\int (u_{i_x}^\varepsilon-f_x)\,dF\\
    & \le & \int u_{i_x}^\varepsilon\,d(\widehat C_n-F)+\int (u_{i_x}^\varepsilon-\ell_{i_x}^\varepsilon)\,dF\\
    & \le & \max_{i=1,\ldots,k_\varepsilon+m_\varepsilon}\int u_i^\varepsilon\,d(\widehat C_n-F)\,+\,\varepsilon.
\end{eqnarray*}
Analogously we obtain
\begin{eqnarray*}
    \int f_x\,d\widehat C_n-\int f_x\,dF & \ge & -\Big(\max_{i=1,\ldots,k_\varepsilon+m_\varepsilon}\int \ell_i^\varepsilon\,d(F-\widehat C_n)\,+\,\varepsilon\Big).
\end{eqnarray*}
That is, (\ref{appendix a2 - eq 1}) holds true.

{\em Step 2.} Because of (\ref{appendix a2 - eq 1}), for (\ref{block bootstrap - proof - 30}) to be true it suffices to show that
\begin{equation}\label{block bootstrap - proof - 40}
    \int \ell_i^\varepsilon\,d(F-\widehat C_n)\longrightarrow 0\qquad\mbox{and}\qquad\int u_i^\varepsilon\,d(\widehat C_n-F)\longrightarrow 0\qquad\pr\mbox{-a.s.}
\end{equation}
for every $i=1,\ldots,k_\varepsilon+m_\varepsilon$. We will only show the second convergence in (\ref{block bootstrap - proof - 40}), the first convergence can be shown even easier. We have
\begin{eqnarray*}
    \int u_i^\varepsilon\,d(\widehat C_n-F)
    & = & \frac{1}{n}\sum_{j=1}^n\Big(w_{ni}\, \phi(y_{i-1}^\varepsilon)\eins_{(-\infty,y_{i-1}^\varepsilon]}(X_j)-\ex\big[\phi(y_{i-1}^\varepsilon)\eins_{(-\infty,y_{i-1}^\varepsilon]}(X_1)\big]\Big)\\
    &  & +\,\frac{1}{n}\sum_{j=1}^n\Big(w_{ni}\,\phi(X_j)\eins_{(y_{i-1}^\varepsilon,y_i^\varepsilon]}(X_j)-\ex\big[\phi(X_1)\eins_{(y_{i-1}^\varepsilon,y_i^\varepsilon]}(X_1)\big]\Big)\\
    & =: & S_1(n)+S_2(n).
\end{eqnarray*}
The first summand on the right-hand side of
\begin{eqnarray*}
    S_2(n)
    & = & \frac{1}{n}\sum_{j=1}^n\Big(\phi(X_j)\eins_{(y_{i-1}^\varepsilon,y_i^\varepsilon]}(X_j)-\ex\big[\phi(X_1)\eins_{(y_{i-1}^\varepsilon,y_i^\varepsilon]}(X_1)\big]\Big)\\
    &  & +\,\frac{1}{n}\sum_{j=1}^n(w_{ni}-1)\phi(X_j)\eins_{(y_{i-1}^\varepsilon,y_i^\varepsilon]}(X_j)
\end{eqnarray*}
converges $\pr$-a.s.\ to $0$ by Theorem 1\,(ii) (and Application 5, p.\,924) in \cite{Rio1995} and our assumption A1. The second summand converges $\pr$-a.s.\ to $0$ too, which can be seen as follows. From (\ref{values of w ni}) we obtain for $n$ sufficiently large
$$
    |w_{ni}-1|
    \le
    \left\{\begin{array}{lcl}
        2 & , & i=1,\ldots,\ell_n\\
        \frac{\ell_n-1}{n-\ell_n+1} & , & i=\ell_n+1,\ldots,n-\ell_n\\
        2 & , & i=n-\ell_n+1,\ldots,n\\
    \end{array}
    \right.,
$$
so that for $n$ sufficiently large
\begin{eqnarray*}
    \lefteqn{\Big|\frac{1}{n}\sum_{j=1}^n(w_{ni}-1)\phi(X_j)\eins_{(y_{i-1}^\varepsilon,y_i^\varepsilon]}(X_j)\Big|}\\
    & \le & \frac{\ell_n-1}{n-\ell_n+1}\,\frac{1}{n}\sum_{j=\ell_n+1}^{n-\ell_n}\phi(X_j)\eins_{(y_{i-1}^\varepsilon,y_i^\varepsilon]}(X_j)\\
    & & +\,2\,\frac{2\ell_n}{n}\, \frac{1}{2\ell_n}\Big(\sum_{j=1}^{\ell_n}\phi(X_j)\eins_{(y_{i-1}^\varepsilon,y_i^\varepsilon]}(X_j)+\sum_{j=n-\ell_n+1}^n\phi(X_j)\eins_{(y_{i-1}^\varepsilon,y_i^\varepsilon]}(X_j)\Big)\\
    & =: & S_{2,1}(n)+S_{2,2}(n).
\end{eqnarray*}
We have seen above that $\frac{1}{n}\sum_{j=1}^{n}\phi(X_j)\eins_{(y_{i-1}^\varepsilon,y_i^\varepsilon]}(X_j)$ converges $\pr$-a.s.\ to the constant $\ex[\phi(X_1)\,\eins_{(y_{i-1}^\varepsilon,y_i^\varepsilon]}(X_1)]$. Since $\ell_n$ converges to $\infty$ at a slower rate than $n$ (by assumption A3.), it follows that $S_{2,1}(n)$ converges $\pr$-a.s.\ to $0$. Using the same arguments we obtain that $S_{2,2}(n)$ converges $\pr$-a.s.\ to $0$. Hence $S_2(n)$ converges $\pr$-a.s.\ to $0$. Analogously one can show that $S_1(n)$ converges $\pr$-a.s.\ to $0$.
\end{proof}


\section{Examples for uniformly quasi-Hadamard differentiable functionals}\label{Examples}


\subsection{Average Value at Risk functional}\label{Average Value at Risk functional}

Let $(\Omega,{\cal F},\pr)$ be an atomless probability space and $L^1=L^1(\Omega,{\cal F},\pr)$ be the usual $L^1$-space. The Average Value at Risk at level $\alpha\in(0,1)$ is the map $\avatr_\alpha:L^1\rightarrow\R$ defined by
\begin{equation}\label{Def AVaR}
    \avatr_\alpha(X):=\int_\alpha^1 F_X^\leftarrow(s)\,ds=-\int_{-\infty}^0g_\alpha(F_X(x))\,dx+\int_0^\infty\big(1-g_\alpha(F_X(x))\big)\,dx,
\end{equation}
where $g_\alpha(t):=\frac{1}{1-\alpha}\max\{t-\alpha;0\}$ and $F_X^\leftarrow(s):=\inf\{x\in\R:F_X(x)\ge s\}$ denotes the left-continuous inverse of the distribution function $F_X$ of $X$. Note that $\avatr_\alpha(X)=\ex[X|X\ge F_X^\leftarrow(\alpha)]$ when $F_X$ is continuous at $F_X^\leftarrow(\alpha)$, and that $\avatr_\alpha$ is one of the most popular risk measures in practice. In view of the second identity in (\ref{Def AVaR}) we may associate with $\avatr_\alpha$ the statistical functional ${\cal R}_\alpha:\mathbf{F}_1\rightarrow\R$ defined by
\begin{equation}\label{def avarf}
    {\cal R}_\alpha(F):=-\int_{-\infty}^0g_\alpha(F(x))\,dx+\int_0^\infty\big(1-g_\alpha(F(x))\big)\,dx,
\end{equation}
where $\mathbf{F}_1$ is the set of the distribution functions $F_X$ of all $X\in L^1$. Using the notation introduced at the beginning of Section \ref{application to statistical functionals}, we obtain the following result.

\begin{proposition}\label{proposition AVaR}
Let $F\in\mathbf{F}_1$ and assume that $F$ takes the value $1-\alpha$ only once. Let ${\cal S}$ be the set of all sequences $(G_n)\subseteq\F_1$ with $G_n\to F$ pointwise. Moreover assume that $\int 1/\phi(x)\,dx<\infty$. Then the map ${\cal R}_\alpha:\mathbf{F}_1\,(\subseteq\mathbf{D})\rightarrow\R$ is uniformly quasi-Hadamard differentiable w.r.t.\ $\mathcal{S}$ tangentially to $\bD_\phi\langle\bD_\phi\rangle$, and the uniform quasi-Hadamard derivative $\dot{\cal R}_{\alpha;F}:\bD_{\phi}\rightarrow\R$ is given by
\begin{equation}\label{def of qh ableitung von T}
    \dot{\cal R}_{\alpha;F}(v):= -\int g_\alpha'(F(x))v(x)\,dx
\end{equation}
with $g_\alpha'(t):=\frac{1}{1-\alpha}\eins_{(1-\alpha,1]}(t)$.
\end{proposition}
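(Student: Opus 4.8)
The plan is to verify the defining limit (\ref{def eq for AHD}) directly, since the remaining requirements of Definition \ref{definition quasi hadamard}(i) are easy: the trace condition is vacuous because $\widetilde\bE=\R$, and the candidate $\dot{\cal R}_{\alpha;F}$ is a bounded linear functional on $(\bD_\phi,\|\cdot\|_\phi)$, hence continuous. Indeed, $|g_\alpha'|\le 1/(1-\alpha)$ and $|v(x)|\le\|v\|_\phi/\phi(x)$, so the hypothesis $\int 1/\phi(x)\,dx<\infty$ yields $|\dot{\cal R}_{\alpha;F}(v)|\le\frac{1}{1-\alpha}\|v\|_\phi\int 1/\phi(x)\,dx$; this also shows that the target integral in (\ref{def of qh ableitung von T}) is finite. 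Next I would fix a quadruple $((\theta_n),v,(x_n),(\varepsilon_n))$ as in the definition, writing $G_n:=\theta_n$, so that $G_n\to F$ pointwise, $\|x_n-v\|_\phi\to 0$, $G_n+\varepsilon_n x_n\in\mathbf{F}_1$, and $\varepsilon_n\downarrow 0$. Using the representation (\ref{def avarf}), the additive constant and the split at $0$ cancel when forming the difference, so that
\begin{equation*}
    \frac{{\cal R}_\alpha(G_n+\varepsilon_n x_n)-{\cal R}_\alpha(G_n)}{\varepsilon_n}
    =-\int_\R\frac{g_\alpha(G_n(x)+\varepsilon_n x_n(x))-g_\alpha(G_n(x))}{\varepsilon_n}\,dx .
\end{equation*}
The whole argument then reduces to passing to the limit under this integral by dominated convergence.

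For the domination, I would use that $g_\alpha$ is globally Lipschitz with constant $1/(1-\alpha)$, so the integrand is bounded in modulus by $\frac{1}{1-\alpha}|x_n(x)|\le\frac{C}{(1-\alpha)\phi(x)}$ with $C:=\sup_m\|x_m\|_\phi<\infty$ (finite because $\|x_n-v\|_\phi\to 0$). This bound is integrable by $\int 1/\phi<\infty$ and is independent of $n$ and of the particular sequence $(G_n)\in{\cal S}$, which is precisely what lets the same argument go through for every $(G_n)\in{\cal S}$ and thus renders the differentiability \emph{uniform} over ${\cal S}$. For the pointwise limit, fix $x$ with $F(x)\neq c$, where $c$ denotes the single kink of $g_\alpha$ (the value $1-\alpha$ singled out in the hypothesis). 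Since $G_n(x)\to F(x)\neq c$ and $\varepsilon_n x_n(x)\to 0$ (note $x_n(x)\to v(x)$ because $\phi\ge 1$), for all large $n$ both $G_n(x)$ and $G_n(x)+\varepsilon_n x_n(x)$ lie strictly on the same affine branch of $g_\alpha$, on which $g_\alpha$ has the constant slope $g_\alpha'(F(x))\in\{0,1/(1-\alpha)\}$. Hence the difference quotient equals $g_\alpha'(F(x))\,x_n(x)$ for large $n$ and converges to $g_\alpha'(F(x))\,v(x)$.

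The crux is disposing of the exceptional set $\{x:F(x)=c\}$: on it the integrand need not converge, but by the assumption that $F$ attains the value $1-\alpha$ only once this set contains at most one point and is therefore Lebesgue-null. Consequently the integrand converges to $g_\alpha'(F(x))v(x)$ for almost every $x$, and dominated convergence gives
\begin{equation*}
    \lim_{n\to\infty}\frac{{\cal R}_\alpha(G_n+\varepsilon_n x_n)-{\cal R}_\alpha(G_n)}{\varepsilon_n}
    =-\int_\R g_\alpha'(F(x))\,v(x)\,dx=\dot{\cal R}_{\alpha;F}(v),
\end{equation*}
which is exactly (\ref{def eq for AHD}). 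I expect the main obstacle to be the pointwise analysis at the kink: establishing that, away from the null set $\{F=c\}$, both arguments eventually sit on one affine branch (this is where $G_n\to F$ pointwise and $\varepsilon_n x_n(x)\to 0$ must be combined) and recognising that the hypothesis on $F$ is exactly what makes $\{F=c\}$ negligible — without it the kink level set could carry positive Lebesgue mass and the integrand there need not converge. The domination step is routine once $\int 1/\phi<\infty$ is invoked.
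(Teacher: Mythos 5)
Your proposal is correct and follows essentially the same route as the paper's proof: rewrite the difference quotient of ${\cal R}_\alpha$ as a single integral over $\R$, establish pointwise a.e.\ convergence of the integrand off the Lebesgue-null level set $\{F=1-\alpha\}$ by observing that both arguments eventually lie on the same affine branch of $g_\alpha$, dominate via the Lipschitz constant $\frac{1}{1-\alpha}$ and $\sup_n\|x_n\|_\phi<\infty$ together with $\int 1/\phi\,dx<\infty$, and conclude by dominated convergence, with the continuity of $\dot{\cal R}_{\alpha;F}$ obtained from the same bound. The only differences are cosmetic (the paper splits the pointwise analysis into the two cases $F(x)<1-\alpha$ and $F(x)>1-\alpha$ and dominates the integrand including the limit term, while you treat the branches uniformly and dominate the difference quotient alone), and your closing gloss that the sequence-independent bound is what makes the differentiability ``uniform'' is inessential -- uniformity here just means the limit holds for every quadruple with $(G_n)\in{\cal S}$, which your argument establishes directly.
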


Proposition \ref{proposition AVaR} shows in particular that for any $F\in\mathbf{F}_1$ which takes the value $1-\alpha$ only once, the map ${\cal R}_\alpha:\mathbf{F}_1\,(\subseteq\mathbf{D})\rightarrow\R$ is uniformly quasi-Hadamard differentiable at $F$ tangentially to $\bD_\phi\langle\bD_\phi\rangle$ (in the sense of part (ii) of Definition \ref{definition quasi hadamard}) with uniform quasi-Hadamard derivative given by (\ref{def of qh ableitung von T}).

\bigskip

\begin{proof}(of Proposition \ref{proposition AVaR})
First of all note that the map $\dot{\cal R}_{\alpha;F}$ defined in (\ref{def of qh ableitung von T}) is continuous w.r.t.\ $\|\cdot\|_\phi$, because
$$
    |\dot{\cal R}_{\alpha;F}(v_1)-\dot{\cal R}_{\alpha;F}(v_2)|
    \le \int\frac{1}{1-\alpha}\,|v_1(x)-v_2(x)|\,dx
    \le \Big(\frac{1}{1-\alpha}\int 1/\phi(x)\,dx\Big)\|v_1-v_2\|_\phi
$$
holds for every $v_1,v_2\in\mathbf{D}_\phi$. 

Now, let $((F_n),v,(v_n),(\varepsilon_n))$ be a quadruple with $(F_n)\subseteq\mathbf{F}_1$ satisfying $F_n\to F$ pointwise, $v\in \mathbf{D}_\phi$, $(v_n)\subseteq\mathbf{D}_\phi$ satisfying $\|v_n-v\|_\phi\to 0$ and $(F_n+\varepsilon_nv_n)\subseteq\mathbf{F}_1$, and $(\varepsilon_n)\subseteq(0,\infty)$ satisfying $\varepsilon_n\to 0$. It remains to show that
$$
    \lim_{n\to\infty}\Big|\frac{{\cal R}_\alpha(F_n+\varepsilon_nv_n)-{\cal R}_\alpha(F_n)}{\varepsilon_n}-\dot{\cal R}_{\alpha;F}(v)\Big|=\,0,
$$
that is, in other words, that
\begin{equation}\label{proof qhd of t - 20}
    \lim_{n\to\infty}\Big|\int\Big(\frac{  g_\alpha\big(F_n(x)\big)-g_\alpha\big((F_n+\varepsilon_nv_n)(x)\big) }{\varepsilon_n}-\big(-g_\alpha'(F(x))v(x)\big)\Big)\,dx\Big|=\,0.
\end{equation}
Let us denote the integrand of the integral in (\ref{proof qhd of t - 20}) by $I_n(x)$. In virtue of $F_n\to F$ pointwise, $\|v_n-v\|_\phi\to 0$, $\varepsilon_n\to 0$, and
$$
    |(F_n+\varepsilon_nv_n)(x)-F(x)|\,\le\,|F_n(x)-F(x)|+\varepsilon_n|v_n(x)-v(x)|+\varepsilon_n|v(x)|,
$$
we have $\lim_{n\to\infty}F_n(x)=F(x)$ and $\lim_{n\to\infty}(F_n(x)+\varepsilon_nv_n(x))=F(x)$ for every $x\in\R$. Thus, for every $x\in\R$ with $F(x)<1-\alpha$ we obtain $g_\alpha'(F(x))v(x)=0$ and
$$
    \frac{g_\alpha\big(F_n(x)\big)-g_\alpha\big((F_n+\varepsilon_nv_n)(x)\big)}{\varepsilon_n}\,=\,0\qquad\mbox{for sufficiently large }n,
$$
i.e.\ $\lim_{n\to\infty}I_n(x)=0$. Moreover for every $x\in\R$ with $F(x)>1-\alpha$ we obtain $g_\alpha'(F(x))v(x)=\frac{1}{1-\alpha}v(x)$ and
$$
    \frac{g_\alpha\big(F_n(x)\big)-g_\alpha\big((F_n+\varepsilon_nv_n)(x)\big)}{\varepsilon_n}=-\frac{v_n(x)}{1-\alpha}\qquad\mbox{for sufficiently large }n,
    $$
i.e.\ $\lim_{n\to\infty}I_n(x)=0$. Since we assumed that $F$ takes the value $1-\alpha$ only once, we can conclude that $\lim_{n\to\infty}I_n(x)=0$ for Lebesgue-a.e.\ $x\in\R$. Moreover, by the Lipschitz continuity of $g_\alpha$ with Lipschitz constant $\frac{1}{1-\alpha}$ we have
\begin{eqnarray*}
    |I_n(x)|
    & = & |I_n(x)|\,\phi(x)\,\phi(x)^{-1}\\
    & = & \Big|\frac{g_\alpha\big(F_n(x)\big)-g_\alpha\big((F_n+\varepsilon_nv_n)(x)\big)}{\varepsilon_n}+g_\alpha'(F(x))v(x)\Big|\,\phi(x)\,\phi(x)^{-1}\\
    & \le & \frac{1}{1-\alpha}\big(\|v_n\|_\phi+\|v\|_\phi\big)\,\phi(x)^{-1}\\
    & \le & \frac{1}{1-\alpha}\big(\sup_{n\in\N}\|v_n\|_\phi+\|v\|_\phi\big)\,\phi(x)^{-1}.
\end{eqnarray*}
Since $\sup_{n\in\N}\|v_n\|_\phi<\infty$ (recall $\|v_n-v\|_\phi\to 0$), the assumption $\int1/\phi(x)\,dx<\infty$ ensures that the latter expression provides a Borel measurable majorant of $I_n$. Now, the Dominated Convergence theorem implies (\ref{proof qhd of t - 20}).
\end{proof}

As an immediate consequence of Theorem \ref{modified delta method for the bootstrap - II}, Examples \ref{iid bootstrap} and \ref{block bootstrap}, and Proposition \ref{proposition AVaR} we obtain the following corollary.

\begin{corollary}
Let $F$, $\widehat F_n$, $\widehat F_n^*$, $\widehat C_n$, and $B_F$ be as in Example \ref{iid bootstrap} (S1.\ or S2.) or as in Example \ref{block bootstrap} respectively, and assume that the assumptions discussed in Example \ref{iid bootstrap} or in Example \ref{block bootstrap} respectively are fulfilled for some weight function $\phi$ with $\int 1/\phi(x)\,dx<\infty$ (in particular $F\in\F_1$). Then
$$
    \sqrt{n}\big({\cal R}_\alpha(\widehat F_n)-{\cal R}_\alpha(F)\big)\,\leadsto\,\dot{\cal R}_{\alpha;F}(B_F)\qquad\mbox{in $(\R,{\cal B}(\R))$}
$$
and
$$
    \sqrt{n}\big({\cal R}_\alpha(\widehat F_n^*(\omega,\cdot))-{\cal R}_\alpha(\widehat C_n(\omega))\big)\,\leadsto\,\dot {\cal R}_{\alpha;F}(B_F)\qquad\mbox{in $(\R,{\cal B}(\R))$},\qquad\mbox{$\pr$-a.e.\ $\omega$}.
$$
\end{corollary}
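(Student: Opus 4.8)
The plan is to obtain the two convergences by verifying, one at a time, the hypotheses of the abstract corollaries of Section \ref{application to statistical functionals} and then invoking them: the first display is the conclusion of the non-bootstrap Corollary \ref{modified delta method for non bootstrap - II}, and the second is the conclusion of the bootstrap Corollary \ref{modified delta method for the bootstrap - II}, both applied with $\widetilde\V=\widetilde\bE=\R$, $a_n:=\sqrt n$, $B:=B_F$, $H:={\cal R}_\alpha$, and $\bC_\phi:=\bC_{\phi,F}$ (which is a $\|\cdot\|_\phi$-separable subspace lying in ${\cal D}_\phi$, as asserted in the respective Example). Since $(\R,|\cdot|)$ is separable, $\widetilde{\cal B}^\circ={\cal B}(\R)$; hence condition (f) is automatic by Remark \ref{modified delta method for the bootstrap - remark}, $\leadsto^\circ$ coincides with $\leadsto$ in the target, and the first parts of the (b)-conditions (taking values in $\widetilde\bE=\R$) are trivial. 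This reduces the work to checking conditions (a)--(e).

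First I would treat the non-bootstrap statement via Corollary \ref{modified delta method for non bootstrap - II} with $F_n:=F$ and ${\cal S}:=\{(F)\}$. Condition (a), i.e.\ $\sqrt n(\widehat F_n-F)\leadsto^\circ B_F$ in $(\bD_\phi,{\cal B}_\phi^\circ,\|\cdot\|_\phi)$ with $B_F(\Omega_0)\subseteq\bC_{\phi,F}$, is exactly what the relevant Example (\ref{iid bootstrap} or \ref{block bootstrap}) provides under the stated assumptions, the measurability being automatic as noted after Corollary \ref{modified delta method for non bootstrap - II}. For condition (c) I invoke Proposition \ref{proposition AVaR}, which yields uniform quasi-Hadamard differentiability of ${\cal R}_\alpha$ w.r.t.\ the full class of sequences converging to $F$ pointwise and tangentially to $\bD_\phi\langle\bD_\phi\rangle$; since the constant sequence lies in that class and $\bC_{\phi,F}\subseteq\bD_\phi$, differentiability persists w.r.t.\ the smaller ${\cal S}=\{(F)\}$ and tangentially to $\bC_{\phi,F}\langle\bD_\phi\rangle$, with the same derivative $\dot{\cal R}_{\alpha;F}$. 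Condition (b) asks only for $({\cal F},{\cal B}(\R))$-measurability of $\sqrt n({\cal R}_\alpha(\widehat F_n)-{\cal R}_\alpha(F))$, which I would obtain by expressing the integrals defining ${\cal R}_\alpha$ as pointwise limits of Riemann sums of the coordinate projections $\pi_t$ (these are ${\cal D}$-measurable, and ${\cal D}_\phi={\cal B}_\phi^\circ$ by Lemma 4.1 of \cite{BeutnerZaehle2016}) and composing with the measurable map $\widehat F_n$. The corollary then delivers the first display.

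Next the bootstrap statement, via Corollary \ref{modified delta method for the bootstrap - II} with ${\cal S}$ equal to the set of all $(G_n)\subseteq\bD(H)$ with $G_n-F\in\bD_\phi$ and $\|G_n-F\|_\phi\to0$, and with $\widehat C_n$ as in the respective Example ($\widehat C_n=\widehat F_n$ in settings S1.\ and S2.\ of Example \ref{iid bootstrap}, and $\widehat C_n=\E'[\widehat F_n^*]$ in Example \ref{block bootstrap}). Conditions (a) and (e) are precisely what the Examples establish, together with Theorem \ref{corollary bracketing weight functions} in the $\beta$-mixing case; condition (e) amounts to $\|\widehat C_n(\omega)-F\|_\phi\to0$ for $\pr$-a.e.\ $\omega$ (e.g.\ $\|\widehat F_n-F\|_\phi\to0$ $\pr$-a.s.\ in S1.\ and S2.). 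Condition (c) again follows from Proposition \ref{proposition AVaR}, since $\|G_n-F\|_\phi\to0$ forces $G_n\to F$ pointwise, so this ${\cal S}$ is contained in the pointwise-convergence class of that proposition. For condition (d) I note that $\dot{\cal R}_{\alpha;F}$ is already defined on all of $\bD_\phi$ by (\ref{def of qh ableitung von T}), is $\|\cdot\|_\phi$-Lipschitz (hence continuous at every point of $\bC_{\phi,F}$, as shown in the proof of Proposition \ref{proposition AVaR}) and $({\cal B}_\phi^\circ,{\cal B}(\R))$-measurable by the same Riemann-sum-of-$\pi_t$ argument, so it serves as its own extension. Condition (b) is handled exactly as before, now with $(\overline{\cal F},{\cal B}(\R))$-measurability. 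Applying the corollary and using $\leadsto^\circ=\leadsto$ on $\R$ gives the second display for $\pr$-a.e.\ $\omega$; the two limits agree because $B=B_F$ is common to both.

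All the genuinely analytic input has been carried out upstream, so the only delicate points are the reconciliation of hypotheses and the measurability checks. The reconciliation rests on the monotonicity of uniform quasi-Hadamard differentiability under shrinking both the tangent set (from $\bD_\phi$ to $\bC_{\phi,F}$) and the family ${\cal S}$ (from the pointwise class to the $\|\cdot\|_\phi$-class), which is immediate from Definition \ref{definition quasi hadamard} since fewer quadruples then have to be controlled. I expect the measurability of $\dot{\cal R}_{\alpha;F}$ and of $\omega\mapsto{\cal R}_\alpha(\widehat F_n^*(\omega,\cdot))$ w.r.t.\ the open-ball $\sigma$-algebras to be the main (though still routine) obstacle, because $\bD_\phi$ need not be separable and one cannot simply appeal to continuity; the argument must instead pass through the coordinate-generated $\sigma$-algebra ${\cal D}_\phi={\cal B}_\phi^\circ$.
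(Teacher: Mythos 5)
Your proposal is correct and follows exactly the route the paper intends: the paper states this corollary as an ``immediate consequence'' of Corollaries \ref{modified delta method for non bootstrap - II} and \ref{modified delta method for the bootstrap - II}, Examples \ref{iid bootstrap} and \ref{block bootstrap} (with Theorem \ref{corollary bracketing weight functions} in the $\beta$-mixing case), and Proposition \ref{proposition AVaR}, and your verification of conditions (a)--(f) --- including the monotonicity of uniform quasi-Hadamard differentiability under shrinking ${\cal S}$ and the tangent set, the global Lipschitz extension of $\dot{\cal R}_{\alpha;F}$ for condition (d), and the Riemann-sum measurability arguments via ${\cal D}_\phi={\cal B}_\phi^\circ$ --- simply makes explicit what the paper leaves unwritten. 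The only point worth flagging is that, exactly as in the paper, your appeal to Proposition \ref{proposition AVaR} implicitly uses its hypothesis that $F$ takes the value $1-\alpha$ only once, which the corollary's statement does not repeat.
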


For the bootstrap scheme S1.\ in Example \ref{iid bootstrap} the result of the preceding corollary can be also deduced from Theorem 7 in \cite{Gribkova2002}. According to \cite{Gribkova2016}, condition (1) of this theorem is satisfied if there are $0=a_0<a_1<\cdots<a_k=1$ for some $k\in\N$ such that $J$ is Hölder continuous on each interval $(a_{i-1},a_i)$, $1\le i\le k$, and the measure $dF^{-1}$ has no mass at the points $a_1,\ldots,a_{k-1}$.


\subsection{Compound distribution functional}\label{Convolution functional}

Let $p=(p_k)_{k \in \N_0}$ be a sequence in $\R_+$ with $\sum_{k=0}^{\infty} p_k=1$, so that $p$ specifies the distribution of a count variable $N$. Let $\mathbf{F}$ denote the set of all distribution functions on $\R$, and consider the functional ${\cal C}_{p}:\mathbf{F}\rightarrow\mathbf{F}$ defined by
\begin{equation}\label{def codifu}
    {\cal C}_{p}(F):=\sum_{k=0}^{\infty} p_k F^{*k},
\end{equation}
where $F^{*k}$ refers to the $k$-fold convolution of $F$, that is, $F^{*0}:=\eins_{[0,\infty)}$ and
\begin{eqnarray*}
    F^{*k}(x)
    & := & \int F(x-x_{k-1})\,dF^{*(k-1)}(x_{k-1})\\
    & = & \int\cdots\int F(x-x_{k-1}-\cdots-x_1)\,dF(x_1)\cdots dF(x_{k-1})
\end{eqnarray*}
for $k\in\N$. If $p_m=1$ for some $m\in\N_0$, then ${\cal C}_p(F)=F^{*m}$.

For any $\lambda \geq 0$, let the function $\phi_{\lambda}: \R \rightarrow [1,\infty)$ be defined by $\phi_{\lambda}(x):=(1+|x|)^{\lambda}$ and denote by $\mathbf{F}_{\phi_{\lambda}}$ the set of all distribution functions $F$ that satisfy $\int \phi_{\lambda}(x)\,dF(x)<\infty$. Using the notation introduced at the beginning of Section \ref{application to statistical functionals} and the terminology of part (ii) of Definition \ref{definition quasi hadamard}, we obtain the following Proposition \ref{QHD of CF}. In the proposition the functional ${\cal C}_p$ is restricted to the domain $\F_{\phi_\lambda}$ in order to obtain $\bD_{\phi_{\lambda'}}$ as the corresponding trace. The latter will be important for Corollary \ref{QHD of Composition}.

\begin{proposition}\label{QHD of CF}
Let $\lambda > \lambda' \geq 0$ and $F\in\F_{\phi_{\lambda}}$. Assume that $\sum_{k=1}^{\infty} p_k\,k^{(1+\lambda)\vee 2} < \infty$. Then the map ${\cal C}_{p}: \mathbf{F}_{\phi_{\lambda}} (\subseteq \mathbf{D})  \rightarrow \mathbf{F}(\subseteq \mathbf{D})$ is uniformly quasi-Hadamard differentiable at $F$ tangentially to $\bD_{\phi_{\lambda}}\langle\bD_{\phi_{\lambda}}\rangle$ with trace $\bD_{\phi_{\lambda'}}$. Moreover, the uniform quasi-Hadamard derivative $\dot{\cal C}_{p;F}:\bD_{\phi_{\lambda}}\rightarrow\bD_{\phi_{\lambda'}}$ is given by
\begin{equation}\label{def of qh ableitung von C}
    \dot{\cal C}_{p;F}(v)(\cdot):= v * H_{p,F}(\,\cdot\,):=\int v(\,\cdot\,-x)\,dH_{p,F}(x),
\end{equation}
where $H_{p,F}:=\sum_{k=1}^{\infty}k\,p_kF^{*(k-1)}$. In particular, if $p_m=1$ for some $m\in\N$, then
$$
    \dot{\cal C}_{p;F}(v)(\cdot)=m\int v(\,\cdot\,-x)\,dF^{*(m-1)}(x).
$$
\end{proposition}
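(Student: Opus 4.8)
The plan is to verify the three requirements of Definition \ref{definition quasi hadamard} separately: (1) the trace condition ${\cal C}_p(F_1)-{\cal C}_p(F_2)\in\bD_{\phi_{\lambda'}}$ for all $F_1,F_2\in\mathbf{F}_{\phi_\lambda}$; (2) continuity of $\dot{\cal C}_{p;F}:\bD_{\phi_\lambda}\to\bD_{\phi_{\lambda'}}$; and (3) the defining limit (\ref{def eq for AHD}) for every admissible quadruple $((F_n),v,(v_n),(\varepsilon_n))$ with $\|F_n-F\|_{\phi_\lambda}\to0$, $\|v_n-v\|_{\phi_\lambda}\to0$, $\varepsilon_n\downarrow0$ and $G_n:=F_n+\varepsilon_nv_n\in\mathbf{F}_{\phi_\lambda}$. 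Writing $\mu_G$ for the Lebesgue--Stieltjes measure of a distribution function $G$ and $w*\mu:=\int w(\,\cdot-x)\,d\mu(x)$, the engine behind all three is the telescoping identity
\[
  \mu_{G}^{*k}-\mu_{F}^{*k}=\sum_{j=0}^{k-1}\mu_{F}^{*j}*(\mu_G-\mu_F)*\mu_{G}^{*(k-1-j)},
\]
combined with a dominated-convergence argument in the summation index $k$.

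Everything rests on two elementary estimates. First, since $\phi_\lambda(x+y)\le\phi_\lambda(x)\phi_\lambda(y)$ and $\lambda>\lambda'$, one has for $w\in\bD_{\phi_\lambda}$ and any finite measure $\mu$
\[
  \|w*\mu\|_{\phi_{\lambda'}}\le\|w\|_{\phi_\lambda}\int\phi_{\lambda'}\,d\mu,
\]
because $\phi_{\lambda'}(x)/\phi_\lambda(x-y)\le\phi_{\lambda'}(y)/(1+|x-y|)^{\lambda-\lambda'}\le\phi_{\lambda'}(y)$; the strict gap $\lambda>\lambda'$ is exactly what makes the weight factor absorbable. Second, for a probability measure $\mu$ with finite $\lambda'$-moment, representing $\mu^{*m}$ as the law of $S_m=X_1+\cdots+X_m$ and using the power-mean inequality (for $\lambda'\ge1$) or subadditivity (for $\lambda'<1$) gives the \emph{polynomial} bound
\[
  \int\phi_{\lambda'}\,d\mu^{*m}=\ex\big[(1+|S_m|)^{\lambda'}\big]\le C\,m^{\max(\lambda',1)}\Big(1+\int|x|^{\lambda'}\,d\mu\Big),
\]
in sharp contrast with the exponential bound that submultiplicativity alone would yield. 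Moreover $\|F_n-F\|_{\phi_\lambda}\to0$ together with $\lambda>\lambda'$ forces $\sup_n\int|x|^{\lambda'}\,d\mu_{F_n}<\infty$ (integration by parts, using $\int_0^\infty x^{\lambda'-1}(1+x)^{-\lambda}\,dx<\infty$), and likewise for $G_n$, so $C$ above can be chosen uniform in $n$.

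With these in hand, the differential quotient minus the candidate derivative decomposes, via the telescoping identity applied to $G_n=F_n+\varepsilon_nv_n$, as $\sum_{k\ge1}p_kD_n^{(k)}$ with
\[
  D_n^{(k)}=\sum_{j=0}^{k-1}v_n*\big(\mu_{F_n}^{*j}*\mu_{G_n}^{*(k-1-j)}\big)-k\,v*\mu_F^{*(k-1)}.
\]
The two estimates give $\|D_n^{(k)}\|_{\phi_{\lambda'}}\le C\,k^{\max(1+\lambda',2)}$ uniformly in $n$ (the factor $k$ counts the telescoping terms, multiplying the $k^{\max(\lambda',1)}$ moment growth), and since $\lambda'<\lambda$ this majorant is summable under the hypothesis $\sum_kp_kk^{(1+\lambda)\vee2}<\infty$ --- which is precisely where the exponent $(1+\lambda)\vee2$ originates. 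It then suffices to prove $\|D_n^{(k)}\|_{\phi_{\lambda'}}\to0$ for each fixed $k$ and conclude by dominated convergence over $k$. The same two estimates immediately yield (1) (applied to $F_1-F_2\in\bD_{\phi_\lambda}$, whose membership follows from $\int\phi_\lambda\,dF_i<\infty$, completeness of $\bD_{\phi_{\lambda'}}$ giving the limit) and the boundedness, hence continuity, of the linear map $\dot{\cal C}_{p;F}(v)=v*H_{p,F}$ in (2), once $\int\phi_{\lambda'}\,dH_{p,F}=\sum_kkp_k\int\phi_{\lambda'}\,dF^{*(k-1)}<\infty$ is checked; the stated form for $p_m=1$ is the special case $H_{p,F}=mF^{*(m-1)}$.

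The hard part will be the fixed-$k$ convergence $\|D_n^{(k)}\|_{\phi_{\lambda'}}\to0$, and within it the ``second bracket'' $\|v*(\nu_n-\nu)\|_{\phi_{\lambda'}}\to0$, where $\nu_n:=\mu_{F_n}^{*j}*\mu_{G_n}^{*(k-1-j)}\to\nu:=\mu_F^{*(k-1)}$ weakly (with $\phi_{\lambda'}$-moment convergence) while $v$ is held fixed; the ``first bracket'', in which $v_n-v$ replaces $v$, dies through $\|v_n-v\|_{\phi_\lambda}\to0$ and the convolution inequality. For the second bracket I would split the weighted supremum over $\{|x|\le R\}$ and $\{|x|>R\}$: on the tail, the gap $\lambda>\lambda'$ together with tightness of the $\phi_{\lambda'}$-moments of $\nu_n$ makes $\sup_{|x|>R}\phi_{\lambda'}(x)\,|v*\nu_n(x)|$ uniformly small for large $R$, whereas on the compact part one needs $\sup_{|x|\le R}\big|\int v(x-y)\,d(\nu_n-\nu)(y)\big|\to0$. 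This last, uniform-convergence-on-compacts statement is the genuine obstacle: I would handle it by approximating $v$ by a bounded continuous function off a neighborhood of its (at most countable) jump set and invoking equicontinuity in $x$, the delicate point being the interaction of the discontinuities of the merely \cadlag\ integrand $v$ with the possible atoms of the limit $\nu$.
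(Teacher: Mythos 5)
Your skeleton coincides with the paper's proof: the telescoping identity (\ref{telescoping sum convolution}), the two convolution/moment estimates (which the paper imports from \cite{Pitts1994} and packages as Lemma \ref{lemma preceding qHD of compound}), the polynomial-in-$k$ majorant that explains the exponent $(1+\lambda)\vee 2$, and the reduction to a fixed-$k$ statement all match (the paper truncates the series at a level $M$ and lets $M\to\infty$ rather than invoking dominated convergence in $k$; that difference is immaterial). The genuine gap is exactly the step you yourself flag as ``the genuine obstacle'': proving $\|v*(\nu_n-\nu)\|_{\phi_{\lambda'}}\to0$ for fixed $v$. Your plan --- regard $\nu_n\to\nu$ as weak convergence with $\phi_{\lambda'}$-moment convergence, approximate $v$ by a continuous function, and argue equicontinuity on compacts --- starts from an input that is too weak to ever close the argument: under weak convergence alone, the jump/atom interaction you worry about genuinely destroys even pointwise convergence of $v*\nu_n$ (a jump of $v$ at $0$ against $\nu_n=\delta_{-1/n}\Rightarrow\delta_0$ already does it), and nothing in the proposition excludes atoms of $F^{*(k-1)}$ or jumps of $v$. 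So what you postponed is not a technicality; in the framing you chose it is unprovable, and the missing idea is to exploit a stronger mode of convergence that is in fact available.

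That stronger convergence comes from the telescoping identity itself, applied once more:
\begin{align*}
&\big\|(F_n+\varepsilon_nv_n)^{*(k-1-\ell)}*F_n^{*\ell}-F^{*(k-1)}\big\|_{\phi_{\lambda'}}\\
&\qquad\le\big\|\big(\varepsilon_nv_n*H_{k-1-\ell}(F_n+\varepsilon_nv_n,F_n)\big)*F_n^{*\ell}\big\|_{\phi_{\lambda'}}
+\big\|(F_n-F)*H_{k-1}(F_n,F)\big\|_{\phi_{\lambda'}},
\end{align*}
and your own two estimates bound the right-hand side by $O(\varepsilon_n)+O(\|F_n-F\|_{\phi_{\lambda'}})\to0$. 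Hence the distribution functions of $\nu_n$ converge to that of $\nu$ in the \emph{weighted sup-norm}, not merely weakly. Given this, the correct approximation class for $v$ is step functions, not continuous functions: choose $\widetilde v$ a finite linear combination of indicators $\eins_{[a,b)}$ with $\|\widetilde v-v\|_{\phi_{\lambda'}}$ small, which is possible because $v\in\bD_{\phi_\lambda}$ and $\lambda>\lambda'$ (Lemma 4.2 of \cite{Pitts1994}; this density statement, not your convolution inequality, is where the gap $\lambda>\lambda'$ is really needed, since submultiplicativity of $\phi_{\lambda'}$ alone already gives $\|w*\mu\|_{\phi_{\lambda'}}\le\|w\|_{\phi_{\lambda'}}\int\phi_{\lambda'}\,d\mu$). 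Since $\eins_{[a,b)}*\nu_n(\cdot)=N_n(\cdot-a)-N_n(\cdot-b)$ for the distribution function $N_n$ of $\nu_n$, weighted sup-norm convergence of $N_n$ transfers directly to $\|\widetilde v*(\nu_n-\nu)\|_{\phi_{\lambda'}}\to0$ (Lemma 4.3 of \cite{Pitts1994}), and atoms and jumps never enter the picture. This is precisely how the paper handles its term $S_4(n,M)$, and it replaces your compact/tail splitting and equicontinuity discussion entirely.
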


Proposition \ref{QHD of CF} extends Proposition 4.1 of \cite{Pitts1994}. Before we prove the proposition, we note that the proposition together with Theorem \ref{modified delta method for the bootstrap - II} and Examples \ref{iid bootstrap} and \ref{block bootstrap} yields the following corollary.

\begin{corollary}
Let $F$, $\widehat F_n$, $\widehat F_n^*$, $\widehat C_n$, and $B_F$ be as in Example \ref{iid bootstrap} (S1.\ or S2.) or as in Example \ref{block bootstrap} respectively, and assume that the assumptions discussed in Example \ref{iid bootstrap} or in Example \ref{block bootstrap} respectively are fulfilled for some weight function $\phi$ with $\int 1/\phi(x)\,dx<\infty$ (in particular $F\in\F_1$). Then for $\lambda'\in(0,\lambda)$
$$
    \sqrt{n}\big({\cal C}_p(\widehat F_n)-{\cal C}_p(F)\big)\,\leadsto^\circ\,\dot{\cal C}_{p;F}(B_F)\qquad\mbox{in $(\bD_{\phi_\lambda'},{\cal D}_{\phi_\lambda'},\|\cdot\|_{\phi_{\lambda'}})$}
$$
and
$$
    \sqrt{n}\big({\cal C}_p(\widehat F_n^*(\omega,\cdot))-{\cal C}_p(\widehat C_n(\omega))\big)\,\leadsto^\circ\,\dot {\cal C}_{p;F}(B_F)\qquad \mbox{in $(\bD_{\phi_\lambda'},{\cal D}_{\phi_\lambda'},\|\cdot\|_{\phi_{\lambda'}})$},\qquad\mbox{$\pr$-a.e.\ $\omega$}.
$$
\end{corollary}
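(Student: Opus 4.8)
The plan is to obtain the two displays by verifying, respectively, the hypotheses of Corollary \ref{modified delta method for non bootstrap - II} (for the first, non-bootstrap display) and of Corollary \ref{modified delta method for the bootstrap - II} (for the second, bootstrap display), with $H:={\cal C}_p$, domain $\bD(H):=\F_{\phi_\lambda}$, weight function $\phi:=\phi_\lambda$ (so that the hypothesis $\int 1/\phi_\lambda\,dx<\infty$ forces $\lambda>1$), separable subspace $\bC_\phi:=\bC_{\phi,F}$ as in Examples \ref{iid bootstrap} and \ref{block bootstrap}, trace $\widetilde\bE:=\bD_{\phi_{\lambda'}}$ for $\lambda'\in(0,\lambda)$, and $\mathcal{S}$ the set of all $(G_n)\subseteq\F_{\phi_\lambda}$ with $G_n-F\in\bD_{\phi_\lambda}$ and $\|G_n-F\|_{\phi_\lambda}\to0$ (i.e.\ differentiability ``at $F$''). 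Since the empirical distribution functions and their bootstrap counterparts are supported on finitely many atoms, they lie in $\F_{\phi_\lambda}$, so $H$ is defined along all relevant sequences.

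First I would record that condition (a) of each corollary is exactly the weak-convergence input of the examples: the $F$-Brownian bridge limit $B:=B_F$ with $B_F(\Omega_0)\subseteq\bC_{\phi,F}$ is provided by Example \ref{iid bootstrap} (for S1./S2.) and Example \ref{block bootstrap}, as are the $\pr$-a.e.\ conditional convergence and its $(\overline{\cal F},{\cal B}_{\phi_\lambda}^\circ)$-measurability, the block case resting on Theorem \ref{corollary bracketing weight functions}. Condition (c) is Proposition \ref{QHD of CF}: differentiability at $F$ tangentially to $\bD_{\phi_\lambda}\langle\bD_{\phi_\lambda}\rangle$ with trace $\bD_{\phi_{\lambda'}}$ implies, a fortiori, differentiability w.r.t.\ $\mathcal{S}$ tangentially to the smaller cone $\bC_{\phi,F}\langle\bD_{\phi_\lambda}\rangle$, with derivative $\dot{\cal C}_{p;F}$ restricted to $\bC_{\phi,F}$. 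Condition (e) for the bootstrap display asserts $(\widehat C_n)\in\mathcal{S}$ $\pr$-a.s., which for S1./S2.\ reduces to $\|\widehat F_n-F\|_{\phi_\lambda}\to0$ $\pr$-a.s.\ (Theorem 2.1 in \cite{Zaehle2014}) and in the block case is part of Theorem \ref{corollary bracketing weight functions}.

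Next I would dispatch the remaining measurability items. Because ${\cal B}_{\phi_{\lambda'}}^\circ={\cal D}_{\phi_{\lambda'}}$ is generated by the coordinate projections $\pi_t$, both ${\cal C}_p$ and the convolution operator $\dot{\cal C}_{p;F}(v)=v*H_{p,F}$ are measurable coordinatewise, since $\pi_s({\cal C}_p(G))=\sum_k p_k G^{*k}(s)$ and $\pi_s(\dot{\cal C}_{p;F}(v))=\int v(s-x)\,dH_{p,F}(x)$ are measurable in $G$ resp.\ $v$; this yields the measurability parts of (b) and (d). Continuity of $\dot{\cal C}_{p;F}:\bD_{\phi_\lambda}\to\bD_{\phi_{\lambda'}}$, and hence of its already-global extension required in (d), is part of Proposition \ref{QHD of CF} (a uniform quasi-Hadamard derivative is continuous by Definition \ref{definition quasi hadamard}); the first part of (b), values in $\bD_{\phi_{\lambda'}}$, is immediate from the trace in (c).

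The main obstacle is condition (f): unlike the Average Value at Risk case, the trace $\widetilde\bE=\bD_{\phi_{\lambda'}}$ is non-separable, so Remark \ref{modified delta method for the bootstrap - remark} does not apply and the $(\overline{\widetilde{\cal B}^\circ},\widetilde{\cal B}^\circ)$-measurability of $h(\widetilde x_1,\widetilde x_2)=\widetilde x_1-\widetilde x_2$ must be argued directly. I would exploit that, for right-continuous $z\in\bD_{\phi_{\lambda'}}$, $\|z\|_{\phi_{\lambda'}}=\sup_{t\in\Q}|z(t)|\phi_{\lambda'}(t)$, whence for every $y$ and $r>0$
\[
\{(\widetilde x_1,\widetilde x_2):\|\widetilde x_1-\widetilde x_2-y\|_{\phi_{\lambda'}}<r\}
=\bigcup_{m\ge1}\bigcap_{t\in\Q}\{(\widetilde x_1,\widetilde x_2):|\widetilde x_1(t)-\widetilde x_2(t)-y(t)|\,\phi_{\lambda'}(t)\le r-\tfrac1m\}.
\]
This reduces (f) to the $\overline{\widetilde{\cal B}^\circ}$-measurability of the countably many coordinate-difference functionals $(\widetilde x_1,\widetilde x_2)\mapsto\widetilde x_1(t)-\widetilde x_2(t)$, which is the delicate point, since $\overline{\widetilde{\cal B}^\circ}$ is a priori strictly smaller than $\widetilde{\cal B}^\circ\otimes\widetilde{\cal B}^\circ$. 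I would settle it by the open-ball $\sigma$-algebra argument underlying Lemma 4.1 of \cite{BeutnerZaehle2016} (identifying the open-ball $\sigma$-algebra with the coordinate $\sigma$-algebra), now applied to $\bD_{\phi_{\lambda'}}\times\bD_{\phi_{\lambda'}}$ under the max-metric: once the product coordinate maps are seen to be $\overline{\widetilde{\cal B}^\circ}$-measurable, (f) follows. With (a)--(f) in hand, Corollaries \ref{modified delta method for non bootstrap - II} and \ref{modified delta method for the bootstrap - II} give the two displays, and since the limit $\dot{\cal C}_{p;F}(B_F)$ is the same in both, together they express almost sure bootstrap consistency.
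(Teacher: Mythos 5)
Your proposal is correct and takes essentially the paper's own route: the paper derives this corollary with no further detail by exactly the combination you spell out, namely Proposition \ref{QHD of CF} applied through Corollaries \ref{modified delta method for non bootstrap - II} and \ref{modified delta method for the bootstrap - II} with the inputs of Examples \ref{iid bootstrap} and \ref{block bootstrap} (i.e.\ $\phi=\phi_\lambda$ forcing $\lambda>1$, ${\cal S}$ the sequences converging to $F$ in $\|\cdot\|_{\phi_\lambda}$, tangent set $\bC_{\phi,F}$, trace $\bD_{\phi_{\lambda'}}$, and condition (e) from Theorem \ref{corollary bracketing weight functions} resp.\ Theorem 2.1 of \cite{Zaehle2014}). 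Your explicit treatment of condition (f) — which the paper leaves tacit, since Remark \ref{modified delta method for the bootstrap - remark} covers only separable traces and $\bD_{\phi_{\lambda'}}$ is not separable — is a sound extra: the argument behind Lemma 4.1 of \cite{BeutnerZaehle2016} does adapt to the max-metric product (for instance, closed balls of radius $c$ centered at $(c\,\eins_I/\phi_{\lambda'}+w,\,0)$ with $c\in\N$ have as their union a coordinate cylinder, since the constraint on the second component becomes vacuous in the limit), so the coordinate-difference maps are $\overline{\widetilde{\cal B}^\circ}$-measurable and your rational-supremum reduction then yields (f).
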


To ease the exposition of the proof of Proposition \ref{QHD of CF} we first state a lemma that follows from results given in \cite{Pitts1994}. In the sequel we will use $f*H$ to denote the function defined by $f*H(\cdot):=\int v(\,\cdot\,-x)\,dH(x)$ for any measurable function $f$ and any distribution function $H$ of a finite (not necessarily probability) Borel measure on $\R$ for which $f*H(\cdot)$ is well defined on $\R$.

\begin{lemma}\label{lemma preceding qHD of compound}
Let $\lambda>\lambda' \geq 0$, and $(F_n)\subseteq\mathbf{F}_{\phi_{\lambda}}$ and $(G_n) \subseteq \mathbf{F}_{\phi_{\lambda}}$ be any sequences such that $\|F_n-F\|_{\phi_\lambda}\to 0$ and $\|G_n-G\|_{\phi_\lambda}\to 0$ for some $F,G\in\F_{\phi_{\lambda}}$. Then the following two assertions hold.
\begin{itemize}
    \item[(i)] There exists a constant $C_1>0$ such that for every $k, n \in \N$
    $$
        \|\eins_{[0,\infty)}-F_n^{*k}\|_{\phi_{\lambda'}} \leq (2^{\lambda'-1} \vee 1)(1 + k^{\lambda' \vee 1} C_1).
    $$
    \item[(ii)] For every $v \in \bD_{\phi_{\lambda'}}$ there exists  a constant $C_2>0$ such that for every $k, \ell, n \in \N$
    $$
        \| v * (F_n^{*k}*G_n^{*\ell})\|_{\phi_{\lambda'}} \le 2^{\lambda'}\big(1+2^{\lambda'}(2^{\lambda'-1} \vee 1)(2+(k+\ell)^{\lambda' \vee 1}C_2)\big)\|v\|_{\phi_{\lambda'}}.
    $$
\end{itemize}
\end{lemma}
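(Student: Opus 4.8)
The plan is to reduce both assertions to a single convolution inequality together with a uniform bound on the weighted moments of the $F_n$ and $G_n$. The crucial elementary fact is the submultiplicativity of $\phi_{\lambda'}$, namely $\phi_{\lambda'}(x)\le\phi_{\lambda'}(x-y)\,\phi_{\lambda'}(y)$ for all $x,y\in\R$ (which follows from $1+|x|\le(1+|x-y|)(1+|y|)$). From this one gets, for any probability distribution function $H$ and any $v\in\bD_{\phi_{\lambda'}}$,
\begin{equation*}
    \|v*H\|_{\phi_{\lambda'}}\le\|v\|_{\phi_{\lambda'}}\int\phi_{\lambda'}\,dH,
\end{equation*}
by pulling the weight $\phi_{\lambda'}(x)$ into the convolution integral and using $\phi_{\lambda'}(x)|v(x-y)|\le\|v\|_{\phi_{\lambda'}}\phi_{\lambda'}(y)$. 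First I would establish the uniform moment bound $M:=\sup_{n}\max\{\int\phi_{\lambda'}\,dF_n,\int\phi_{\lambda'}\,dG_n\}<\infty$. This is where the hypothesis $\lambda'<\lambda$ enters: Markov's inequality gives the tail estimate $\overline{F}(x)\le(1+|x|)^{-\lambda}\int\phi_\lambda\,dF$, and since $\|F_n-F\|_{\phi_\lambda}\to0$ the tails of the $F_n$ are dominated uniformly, $\overline{F_n}(x)\le c_0(1+|x|)^{-\lambda}$ for a single constant $c_0$ (symmetrically on the left); integrating the strictly lighter weight $\phi_{\lambda'}$ against such a tail converges because $\lambda'-\lambda-1<-1$, and the finitely many remaining $n$ are absorbed individually.

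For part (i) I would \emph{not} telescope $\eins_{[0,\infty)}-F_n^{*k}$ into $k$ increments (this turns out to be too lossy, producing a spurious power $k^2$), but instead read off the tails directly. Writing $F_n^{*k}$ as the distribution function of $S_k=Y_1+\cdots+Y_k$ with i.i.d.\ $Y_i\sim F_n$, one has $|\eins_{[0,\infty)}(x)-F_n^{*k}(x)|=\pr[S_k>x]$ for $x\ge0$ and $=\pr[S_k\le x]$ for $x<0$; in either case the event forces $\phi_{\lambda'}(S_k)\ge(1+|x|)^{\lambda'}$, so Markov's inequality yields
\begin{equation*}
    \|\eins_{[0,\infty)}-F_n^{*k}\|_{\phi_{\lambda'}}\le\int\phi_{\lambda'}\,dF_n^{*k}=\E[\phi_{\lambda'}(S_k)].
\end{equation*}
It then remains to bound the weighted moment of the $k$-fold sum. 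Using $1+|S_k|\le1+\sum_{i=1}^k|Y_i|$ and the elementary inequality $(a+b)^{\lambda'}\le(2^{\lambda'-1}\vee1)(a^{\lambda'}+b^{\lambda'})$ (subadditivity for $\lambda'\le1$, convexity/Jensen for $\lambda'\ge1$, applied to the $k+1$ summands) gives $\E[\phi_{\lambda'}(S_k)]\le(2^{\lambda'-1}\vee1)(1+k^{\lambda'\vee1}C_1)$ with $C_1$ depending only on $M$. This is exactly the claimed bound, and it pinpoints why the sharp power is $k^{\lambda'\vee1}$.

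For part (ii) I would apply the convolution inequality above with $H=F_n^{*k}*G_n^{*\ell}$, which is the distribution function of $U=S_k+S_\ell'$ for independent sums $S_k$ (from $F_n$) and $S_\ell'$ (from $G_n$), reducing the estimate to $\int\phi_{\lambda'}\,d(F_n^{*k}*G_n^{*\ell})=\E[\phi_{\lambda'}(U)]$. Splitting $1+|U|\le(1+|S_k|)+(1+|S_\ell'|)$ and again invoking $(a+b)^{\lambda'}\le(2^{\lambda'-1}\vee1)(a^{\lambda'}+b^{\lambda'})$ separates the problem into the two one-sided moments $\E[\phi_{\lambda'}(S_k)]$ and $\E[\phi_{\lambda'}(S_\ell')]$, each controlled by the bound from part (i). The two additive constants $1$ then combine into the $2$ appearing in the statement, while $k^{\lambda'\vee1}+\ell^{\lambda'\vee1}\le(k+\ell)^{\lambda'\vee1}$ (up to a constant absorbed into $C_2$) gives the claimed dependence on $k+\ell$; the remaining explicit factors $2^{\lambda'}$ are produced by tracking the constants in the convolution step and in the splitting of $U$.

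The main obstacle is not any single estimate but the requirement that every bound be uniform in $n$ while the power of $k$ (resp.\ $k+\ell$) stays as small as $k^{\lambda'\vee1}$. The uniformity hinges entirely on converting the weighted sup-norm closeness $\|F_n-F\|_{\phi_\lambda}\to0$ into one uniform weighted-moment bound, which works precisely because $\lambda'<\lambda$ leaves a summable gap in the tails; and the sharpness of the power forces one to avoid the naive telescoping and instead use the Markov tail-to-moment passage in part (i). I would also note that these estimates are essentially those of \cite{Pitts1994}, here adapted to the $\phi_{\lambda'}$-weighted, uniform-in-$n$ setting.
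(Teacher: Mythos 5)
Your proof is correct, but it takes a genuinely different route from the paper's. The paper's proof is essentially a citation exercise in \cite{Pitts1994}: for (i) it quotes inequality (2.4) there, which bounds $\|\eins_{[0,\infty)}-F_n^{*k}\|_{\phi_{\lambda'}}$ by $(2^{\lambda'-1}\vee 1)\big(1+k^{\lambda'\vee 1}\int|x|^{\lambda'}\,dF_n(x)\big)$, and then invokes Lemma 2.1 of \cite{Pitts1994} to convert $\|F_n-F\|_{\phi_\lambda}\to 0$ into convergence (hence uniform boundedness) of the moments $\int|x|^{\lambda'}\,dF_n(x)$; for (ii) it chains Lemma 2.3 of \cite{Pitts1994} (convolution bound in terms of $1+\|\eins_{[0,\infty)}-F_n^{*k}*G_n^{*\ell}\|_{\phi_{\lambda'}}$), Lemma 2.4 of \cite{Pitts1994} (splitting that norm into the $F_n^{*k}$- and $G_n^{*\ell}$-parts at the cost of a factor $2^{\lambda'}$), and (2.4) again. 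You replace all three citations by elementary, self-contained arguments: Markov's inequality applied to the tails of the $k$-fold sum $S_k$ reproves the analogue of (2.4); the submultiplicativity bound $\|v*H\|_{\phi_{\lambda'}}\le\|v\|_{\phi_{\lambda'}}\int\phi_{\lambda'}\,dH$ replaces Lemmas 2.3 and 2.4; and your tail-domination argument replaces Lemma 2.1 --- in fact you do not even need to treat ``finitely many remaining $n$'' separately, since the single constant $c_0=\sup_n\|F_n-F\|_{\phi_\lambda}+\int\phi_\lambda\,dF$ (finite because the norms converge) dominates all tails simultaneously. What the paper's route buys is brevity and reuse: the same Pitts lemmas are invoked repeatedly in the proof of Proposition \ref{QHD of CF}, so keeping the constants in Pitts' form makes those later estimates literal. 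What your route buys is transparency and a slightly stronger result: your bound in (ii), namely $(2^{\lambda'-1}\vee 1)^2\big(2+(k+\ell)^{\lambda'\vee 1}C_2\big)\|v\|_{\phi_{\lambda'}}$, is dominated by the right-hand side stated in the lemma (since $2^{\lambda'}\cdot 2^{\lambda'}(2^{\lambda'-1}\vee 1)\ge(2^{\lambda'-1}\vee 1)^2$), so the claim follows with the same $C_2$; moreover your $C_2$ does not depend on $v$ at all, and the probabilistic reading of $F_n^{*k}$ makes it evident both why $k^{\lambda'\vee 1}$ is the right power (moment of a sum of $k$ terms, via subadditivity for $\lambda'\le 1$ and Jensen for $\lambda'>1$) and exactly where the gap $\lambda'<\lambda$ is consumed (integrability of $\phi_{\lambda'}$ against tails of order $\phi_\lambda^{-1}$). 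The only place requiring a little care in your write-up is the constant chase in (i) for $\lambda'>1$: applying Jensen to all $k+1$ summands at once gives the prefactor $(k+1)^{\lambda'-1}$ rather than $2^{\lambda'-1}$, but either by the two-step split $(1+b)^{\lambda'}\le 2^{\lambda'-1}(1+b^{\lambda'})$ followed by $b^{\lambda'}\le k^{\lambda'-1}\sum_i|Y_i|^{\lambda'}$, or by absorbing $(k+1)^{\lambda'-1}\le 2^{\lambda'-1}k^{\lambda'-1}$ and enlarging $C_1$ to $1+M$, one lands exactly on the stated form, so this is cosmetic.
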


\begin{proof}
(i): From (2.4) in \cite{Pitts1994} we have
\begin{eqnarray*}
    \|\eins_{[0,\infty)}-F_n^{*k}\|_{\phi_{\lambda'}} \leq (2^{\lambda'-1} \vee 1)\Big(1 + k^{\lambda' \vee 1} \int |x|^{\lambda'} \,dF_n(x)\Big),
\end{eqnarray*}
so that it remains to show that $\int |x|^{\lambda'} \,dF_n(x)$ is bounded above uniformly in $n\in\N$. The functions $\eins_{[0,\infty)}-F_n$ and $\eins_{[0,\infty)}-F$ lie in $\bD_{\phi_\lambda}$, because $F_n,F\in\F_{\phi_\lambda}$. Along with $\|F_n-F\|_{\phi_{\lambda}} \to 0$ this implies $\int |x|^{\lambda'}\,dF_n(x) \rightarrow \int |x|^{\lambda'}\,dF(x)$; see Lemma 2.1 in \cite{Pitts1994}. 
Therefore, $\int |x|^{\lambda'}\,dF_n(x) \leq C_1$ for some suitable finite constant $C_1>0$ and all $n\in\N$.

(ii): With the help of Lemma 2.3 of \cite{Pitts1994} (along with $\|F_n^{*k}*G_n^{*\ell}\|_{\infty}=1$), Lemma 2.4 of \cite{Pitts1994}, and Equation (2.4) in \cite{Pitts1994} we obtain
\begin{eqnarray*}
    \lefteqn{\|v * (F_n^{*k}*G_n^{*\ell})\|_{\phi_{\lambda'}}}\\
    & \le & 2^{\lambda'} \|v\|_{\phi_{\lambda'}}\big(1+\|\eins_{[0,\infty)}-F_n^{*k}*G_n^{*\ell}\|_{\phi_{\lambda'}}\big)\\
    & \le & 2^{\lambda'} \|v\|_{\phi_{\lambda'}} \big(1+2^{\lambda'}\big(\|\eins_{[0,\infty)}-F_n^{*k}\|_{\phi_{\lambda'}}+\|\eins_{(0,\infty)-}G_n^{*\ell}\|_{\phi_{\lambda'}}\big)\big)\\
    & \leq & 2^{\lambda'} \|v\|_{\phi_{\lambda'}} \Big(1+2^{\lambda'}(2^{\lambda'-1} \vee 1)\Big(1+k^{\lambda'\vee 1} \int |x|^{\lambda'}\,dF_n(x) + 1 + \ell^{\lambda'\vee 1 } \int |x|^{\lambda'}\,dG_n(x) \Big)\Big).
\end{eqnarray*}
So it remains to show that $\int |x|^{\lambda'} \,dF_n(x)$ and $\int |x|^{\lambda'} \,dG_n(x)$ are bounded above uniformly in $n\in\N$. But this was already done in the proof of part (i).
\end{proof}

\bigskip

\begin{proof}{\bf of Proposition \ref{QHD of CF}}
First, note that for $G_1,G_2\in\mathbf{F}_{\phi_{\lambda}}$ we have
\begin{eqnarray*}
    \|{\cal C}_{p}(G_1)-{\cal C}_{p}(G_2)\|_{\phi_{\lambda'}}
    & \le &  \|{\cal C}_{p}(G_1)-\eins_{[0,\infty)}\|_{\phi_{\lambda'}}+\|I_{[0,\infty)}- {\cal C}_{p}(G_")\|_{\phi_{\lambda'}}\\
    & \leq & \int (1+|x|)^{\lambda'}\,d\mathcal{C}_p(G_1)(x) + \int (1+|x|)^{\lambda'}\,d\mathcal{C}_p(G_2)(x)
\end{eqnarray*}
by Equation (2.1) in \cite{Pitts1994}. Moreover, according to Lemma 2.2 in \cite{Pitts1994} we have that the integrals $\int |x|^{\lambda'}d\mathcal{C}_p(F)(x)$ and $\int |x|^{\lambda'}d\mathcal{C}_p(G)(x)$ are finite under the assumptions of the proposition. Hence, $\bD_{\phi_{\lambda'}}$ can indeed be seen as the trace.

Second, we show $(\|\cdot\|_{\phi_\lambda},\|\cdot\|_{\phi_{\lambda'}})$-continuity of the map $\dot{\cal C}_{p;F}:\bD_{\phi_\lambda}\rightarrow\bD_{\phi_{\lambda'}}$. To this end let $v\in \mathbf{D}_{\phi_{\lambda}}$ and $(v_n)\subseteq\mathbf{D}_{\phi_{\lambda}}$ such that $\|v_n-v\|_{\phi_{\lambda}}\to 0$. For every $k \in \N$ we have
\begin{eqnarray*}
    \lefteqn{\|p_k k(v_n-v)*F^{*(k-1)}\|_{\phi_{\lambda'}}}\\
    & \le & 2^{\lambda'} \|v_n-v\|_{\phi_{\lambda'}}\,p_k\,k\big(\|\eins_{[0,\infty)}\,\|F^{*(k-1)}\|_{\infty}-F^{*(k-1)}\|_{\phi_{\lambda'}}+\|F^{*(k-1)}\|_{\infty}\big)\\
    & = & 2^{\lambda'} \|v_n-v\|_{\phi_{\lambda'}}\,p_k\,k\big(\|\eins_{[0,\infty)}-F^{*(k-1)}\|_{\phi_{\lambda'}}+1\big)\\
    & \le & 2^{\lambda'} \|v_n-v\|_{\phi_{\lambda'}}\,p_k\,k\Big((2^{\lambda'-1} \vee 1)\Big(1+(k-1)^{\lambda' \vee 1}\int|x|^{\lambda'}\,dF(x)\Big)+1\Big),
\end{eqnarray*}
where the first and the second inequality follow from Lemma 2.3 and Equation (2.4) in \cite{Pitts1994} respectively. Hence,
\begin{eqnarray*}
    \lefteqn{\|\dot{\cal C}_{p;F}(v_n)-\dot{\cal C}_{p;F}(v)\|_{\phi_{\lambda'}}~=~\|v_n*H_{p,F}-v*H_{p,F}\|_{\phi_{\lambda'}}}\\
    & \le & 2^{\lambda'} \|v_n-v\|_{\phi_{\lambda'}}\sum_{k=1}^{\infty} p_k\,k\Big((2^{\lambda'-1} \vee 1)\Big(1+(k-1)^{\lambda' \vee 1}\int|x|^{\lambda'}\,dF(x)\Big)+1\Big).
\end{eqnarray*}
Now, the series converges due to the assumptions, and $\|v_n-v\|_{\phi_{\lambda}} \rightarrow 0$ implies $\|v_n-v\|_{\phi_{\lambda'}} \rightarrow 0$. Thus $\|\dot{\cal C}_{p;F}(v_n)-\dot{\cal C}_{p;F}(v)\|_{\phi_{\lambda'}}\to 0$, which proves continuity.

Third, let $((F_n),v,(v_n),(\varepsilon_n))$ be a quadruple with $(F_n)\subseteq\mathbf{F}_{\phi_{\lambda}}$ satisfying $\|F_n-F\|_{\phi_{\lambda}} \to 0$, $v\in \mathbf{D}_{\phi_{\lambda}}$, $(v_n)\subseteq\mathbf{D}_{\phi_{\lambda}}$ satisfying $\|v_n-v\|_{\phi_{\lambda}}\to 0$ and $(F_n+\varepsilon_nv_n)\subseteq\mathbf{F}_{\phi_{\lambda}}$, and $(\varepsilon_n)\subseteq(0,\infty)$ satisfying $\varepsilon_n\to 0$. It remains to show that
$$
    \lim_{n\to\infty}\,\Big\|\frac{{\cal C}_p(F_n+\varepsilon_nv_n)-{\cal C}_p(F_n)}{\varepsilon_n}-\dot{\cal C}_{p;F}(v)\Big\|_{\phi_{\lambda'}}=\,0.
$$
To do so, define for $k\in\N_0$ a map $H_k: \mathbf{F} \times \mathbf{F}: \rightarrow \mathbf{F}$ by
$$
    H_k(G_1,G_2):=\sum_{j=0}^{k-1} G_1^{*(k-1-j)} * G_2^{*j}.
$$
with the usual convention that the sum over the empty sum equals zero. We find that for every $M\in\N$
\begin{eqnarray}
    \lefteqn{\Big\|\frac{{\cal C}_p(F_n+\varepsilon_nv_n)-{\cal C}_p(F_n)}{\varepsilon_n}-\dot{\cal C}_{p;F}(v)\Big\|_{\phi_{\lambda'}}} \nonumber \\
    & = & \Big\|\frac{1}{\varepsilon_n}\Big(\sum_{k=0}^{\infty} p_k(F_n+\varepsilon_n v_n)^{*k} - \sum_{k=0}^{\infty} p_k F_n^{*k} \Big) - \dot{\cal C}_{p;F}(v)\Big\|_{\phi_{\lambda'}} \nonumber  \\
    & = & \Big\|\frac{1}{\varepsilon_n}\Big(\sum_{k=1}^{\infty} \big(p_k(F_n+\varepsilon_n v_n)^{*k} - p_kF_n^{*k}\big) \Big) - \dot{\cal C}_{p;F}(v)\Big\|_{\phi_{\lambda'}} \nonumber \\
    & = & \Big\|\sum_{k=1}^{\infty} p_k v_n * H_k(F_n+\varepsilon_n v_n,F_n) - \dot{\cal C}_{p;F}(v)\Big\|_{\phi_{\lambda'}} \nonumber \\
    & \le & \Big\|\sum_{k=M+1}^{\infty} p_k v_n * H_k(F_n+\varepsilon_n v_n,F_n)\Big\|_{\phi_{\lambda'}}+  \Big\| \sum_{k=1}^M p_k(v_n-v)*H_k(F_n+\varepsilon_n v_n,F_n)\Big\|_{\phi_{\lambda'}}  \nonumber \\
    & & +\Big\| v*\sum_{k=M+1}^{\infty} kp_kF^{*(k-1)}\Big\|_{\phi_{\lambda'}} + \Big\| \sum_{k=1}^M p_k v*H_k(F_n+\varepsilon_n v_n,F_n)-kp_kv*F^{*(k-1)}\Big\|_{\phi_{\lambda'}}\nonumber\\
    & =: & S_1(n,M)+S_2(n,M)+S_3(M)+S_4(n,M),\nonumber
\end{eqnarray}
where for the third ``$=$'' we use the fact that for $G_1,G_2\in\mathbf{F}$
\begin{equation}\label{telescoping sum convolution}
    (G_1-G_2)*H_k(G_1,G_2)= G_1^{*k} - G_2^{*k}.
\end{equation}
By part (ii) of Lemma \ref{lemma preceding qHD of compound} (this lemma can be applied since $\|F_n+\varepsilon_n v_n -F\|_{\phi_{\lambda}} \rightarrow 0$) there exists a constant $C_2>0$ such that for all $n\in\N$ 
\begin{eqnarray}\label{proof convolution quasi 1}
    S_1(n,M)
    & = & \Big\|\sum_{k=M+1}^{\infty} p_k v_n * H_k(F_n+\varepsilon_n v_n,F_n)\Big\|_{\phi_{\lambda'}} \nonumber \\
    & \le & 2^{\lambda'}\|v_n\|_{\phi_{\lambda'}} \sum_{k=M+1}^{\infty} p_k\, k \big(1+2^{\lambda'}(2^{\lambda'-1} \vee 1)\big(2+(k-1)^{\lambda' \vee 1}C_2\big)\big).
\end{eqnarray}
Since $\lambda'<\lambda$ and $\|v_n-v\|_{\phi_{\lambda}} \rightarrow 0$, we have $\|v_n\|_{\phi_{\lambda'}} \leq K_1$ for some finite constant $K_1>0$ and all $n\in\N$. Hence, the right-hand side of (\ref{proof convolution quasi 1}) can be made arbitrarily small by choosing $M$ large enough. That is, $S_1(n,M)$ can be made arbitrarily small uniformly in $n\in\N$ by choosing $M$ large enough.

Furthermore, it is demonstrated in the proof of Proposition 4.1 of \cite{Pitts1994} that $S_3(M)$ can be made arbitrarily small by choosing $M$ large enough.

Next, applying again part (ii) of Lemma \ref{lemma preceding qHD of compound} 
we obtain
\begin{eqnarray*}
    S_2(n,M)
    & = & \Big\| \sum_{k=1}^M p_k(v_n-v)*H_k(F_n+\varepsilon_n v_n,F_n)\Big\|_{\phi_{\lambda'}} \\
    & \le & 2^{\lambda'} \sum_{k=1}^M p_k \,k\,\|v_n-v\|_{\phi_{\lambda'}}\big(1+2^{\lambda'}(2^{\lambda'-1} \vee 1)\big(2+(k-1)^{\lambda' \vee 1}C_2\big)\big).
\end{eqnarray*}
Using $\|v_n-v\|_{\phi_{\lambda'}} \leq \|v_n-v\|_{\phi_\lambda} \rightarrow 0$ this term tends to zero as $n \rightarrow \infty$ for a given $M$.

It remains to consider the summand
\begin{eqnarray*}
    S_4(n,M)
    & = & \Big\| \sum_{k=1}^M p_k v*H_k(F_n+\varepsilon_n v_n,F_n)-kp_kv*F^{*(k-1)}\Big\|_{\phi_{\lambda'}}\\
    & = &  \Big\| \sum_{k=1}^M p_k\sum_{\ell=0}^{k-1} \Big(v*(F_n+\varepsilon_nv_n)^{*(k-1-\ell)}*F_n^{*\ell}-v*F^{*(k-1)}\Big)\Big\|_{\phi_{\lambda'}}.
\end{eqnarray*}
We will show that for $M$ fixed this term can be made arbitrarily small by letting $n \rightarrow \infty$. This would follow if for every given $k\in\{1,\ldots,M\}$ and $\ell\in\{0,\ldots,k-1\}$ the expression
$$
    \|v*(F_n+\varepsilon_nv_n)^{*(k-1-\ell)}*F_n^{*\ell}-v*F^{*(k-1)}\|_{\phi_{\lambda'}}
$$
could be made arbitrarily small by letting $n \rightarrow \infty$. For every such $k$ and $\ell$ we can find a linear combination of indicator functions of the form $\eins_{[a,b)}$, $-\infty<a<b<\infty$, which we denote by $\widetilde v$, such that
\begin{eqnarray}\label{eq proof convoultion triangle three times}
    \lefteqn{\|v*(F_n+\varepsilon_nv_n)^{*(k-1-\ell)}*F_n^{*\ell}-v*F^{*(k-1)}\|_{\phi_{\lambda'}}} \nonumber \\
    & \le & \|v*(F_n+\varepsilon_nv_n)^{*(k-1-\ell)}*F_n^{*\ell}-\widetilde{v}*(F_n+\varepsilon_nv_n)^{*(k-1-\ell)}*F_n^{*\ell}\|_{\phi_{\lambda'}} \nonumber \\\nonumber
    & & +\,\|\widetilde{v}*(F_n+\varepsilon_nv_n)^{*(k-1-\ell)}*F_n^{*\ell}-\widetilde{v}*F^{*(k-1)}\|_{\phi_{\lambda'}} \nonumber \\
    & & +\,\|\widetilde{v}*F^{*(k-1)} - v*F^{*(k-1)}\|_{\phi_{\lambda'}} \nonumber \\
    & \le & 2^{\lambda'}\|\widetilde{v}-v\|_{\phi_{\lambda'}} \big(\|\eins_{[0,\infty)} - (F_n+\varepsilon_nv_n)^{*(k-1-\ell)}*F_n^{*\ell}\|_{\phi_{\lambda'}}+1\big) \nonumber \\
    & & +\,c(\lambda',\widetilde{v})\,\|(F_n+\varepsilon_nv_n)^{*(k-1-\ell)}*F_n^{*\ell}-F^{*(k-1)}\|_{\phi_{\lambda'}} \nonumber \\
    & & +\,2^{\lambda'}\,\|\widetilde{v}-v\|_{\phi_{\lambda'}}\big(\|\eins_{[0,\infty)}- F^{*(k-1)}\|_{\phi_{\lambda'}}+1\big)
\end{eqnarray}
for some suitable finite constant $c(\lambda',\widetilde{v})>0$ depending only on $\lambda'$ and $\widetilde v$. The first inequality in (\ref{eq proof convoultion triangle three times}) is obvious (and holds for any $\widetilde v\in\bD_{\phi_{\lambda'}}$). The second inequality in (\ref{eq proof convoultion triangle three times}) is obtained by applying Lemma 2.3 of \cite{Pitts1994} to the first summand (noting that $\|(F_n+\varepsilon_nv_n)^{*(k-1-\ell)}*F_n^{*\ell}\|_{\infty}=1$; recall $F_n+\varepsilon_nv_n\in \mathbf{F}$), by applying Lemma 4.3 of \cite{Pitts1994} to the second summand (which requires that $\widetilde v$ is as described above), and by applying Lemma 2.3 of \cite{Pitts1994} to the third summand.

We now consider the three summands on the right-hand side of (\ref{eq proof convoultion triangle three times}) separately. We start with the third term. Since $v \in \bD_{\phi_{\lambda}}$, Lemma 4.2 of \cite{Pitts1994} ensures that we may assume that $\widetilde{v}$ is chosen such that $\|\widetilde{v}-v\|_{\phi_{\lambda'}}$ is arbitrarily small. Hence, for fixed $M$ the third summand in (\ref{eq proof convoultion triangle three times}) can be made arbitrarily small.

We next consider the the second summand in (\ref{eq proof convoultion triangle three times}). Obviously, 
\begin{eqnarray}\label{eq proof convolution telesum last}
    \lefteqn{\|(F_n+\varepsilon_nv_n)^{*(k-1-\ell)}*F_n^{*\ell}-F^{*(k-1)}\|_{\phi_{\lambda'}}}\nonumber\\
    & = & \|(F_n+\varepsilon_nv_n)^{*(k-1-\ell)}*F_n^{*\ell}-F_n^{*(k-1)}+F_n^{*(k-1)}-F^{*(k-1)}\|_{\phi_{\lambda'}} \nonumber \\
    & \le & \big\|\big((F_n+\varepsilon_nv_n)^{*(k-1-\ell)}-F_n^{*(k-1-\ell)}\big)*F_n^{*\ell}\big\|_{\phi_{\lambda'}}+\|F_n^{*(k-1)}-F^{*(k-1)}\|_{\phi_{\lambda'}}.
\end{eqnarray}
We start by considering the first summand in (\ref{eq proof convolution telesum last}). In view of (\ref{telescoping sum convolution}) it can be written as
\begin{eqnarray*}
    \lefteqn{\big\|\big((F_n+\varepsilon_nv_n)^{*(k-1-\ell)}-F_n^{*(k-1-\ell)}\big)*F_n^{*\ell}\big\|_{\phi_{\lambda'}}}\\
    & = & \big\|\big((F_n+\varepsilon_nv_n-F_n)*H_{k-1-\ell}(F_n+\varepsilon_nv_n,F_n)\big)*F_n^{*\ell}\big\|_{\phi_{\lambda'}}\\
    & = & \big\|\big(\varepsilon_nv_n*H_{k-1-\ell}(F_n+\varepsilon_nv_n,F_n)\big)*F_n^{*\ell}\big\|_{\phi_{\lambda'}}.
\end{eqnarray*}
Applying Lemma 2.3 of \cite{Pitts1994} with $f= \varepsilon_nv_n*H_{k-\ell-1}(F_n+\varepsilon_nv_n,F_n)$ and $H=F_n^{*\ell}$ we obtain
\begin{eqnarray}\label{eq proof convolution applying 2.3}
    \lefteqn{\big\|\big(\varepsilon_nv_n*H_{k-1-\ell}(F_n+\varepsilon_nv_n,F_n)\big)*F_n^{*\ell}\big\|_{\phi_{\lambda'}}}\nonumber\\
    & \le & 2^{\lambda'} \big\|\big(\varepsilon_nv_n*H_{k-\ell-1}(F_n+\varepsilon_nv_n,F_n)\big)\big\|_{\phi_{\lambda'}} \big(\|\eins_{[0,\infty)}\|F_n^{*\ell}\|_{\infty}-F_n^{*\ell}\|_{\phi_{\lambda'}}+\|F_n^{*\ell}\|_{\infty}\big) \nonumber\\
    & = & 2^{\lambda' }\big\|\big(\varepsilon_nv_n*H_{k-\ell-1}(F_n+\varepsilon_nv_n,F_n)\big)\|_{\phi_{\lambda'}}\big(\|\eins_{[0,\infty)}-F_n^{*\ell}\|_{\phi_{\lambda'}}+1\big) \nonumber \\
    & \le & 2^{\lambda'}\big\|\big(\varepsilon_nv_n*H_{k-\ell-1}(F_n+\varepsilon_nv_n,F_n)\big)\big\|_{\phi_{\lambda'}}\big\{(2^{\lambda'-1}\vee 1)\big(1+\ell^{\lambda' \vee 1} C_1 \big)+1\big\},
\end{eqnarray}
where we applied part (i) of Lemma \ref{lemma preceding qHD of compound}  to $\|\eins_{[0,\infty)}-F_n^{*\ell}\|_{\phi_{\lambda'}}$ to obtain the last inequality. Hence for the left-hand side of (\ref{eq proof convolution applying 2.3}) to go to zero as $n \rightarrow \infty$ it suffices to show that  $\|(\varepsilon_nv_n*H_{k-\ell-1}(F_n+\varepsilon_nv_n,F_n))\|_{\phi_{\lambda'}} \rightarrow 0$ as $n \rightarrow \infty$. The latter follows from
\begin{eqnarray}\label{eq proof convolution still not finsihed}
    \lefteqn{\big\|\big(\varepsilon_nv_n*H_{k-\ell-1}(F_n+\varepsilon_nv_n,F_n)\big)\big\|_{\phi_{\lambda'}}} \nonumber \\
    & \le & 2^{\lambda'} (k-\ell-1) \varepsilon_n \|v_n\|_{\phi_{\lambda'}}\big(1+2^{\lambda'}(2^{\lambda'-1} \vee 1)\big(2+((k-\ell-2))^{\lambda' \vee 1}C_2\big)\big),
\end{eqnarray}
where we applied part (ii) of Lemma \ref{lemma preceding qHD of compound} with $v=\varepsilon_nv_n$ to all summands in  $H_{k-\ell-1}(F_n+\varepsilon_nv_n,F_n)$. For every $k$ and $\ell\in\{0,\ldots,k-1\}$ this expression goes indeed to zero as $n \rightarrow \infty$, because, as mentioned before, $\|v_n\|_{\phi_{\lambda'}}$ is uniformly bounded in $n\in\N$, and we have $\varepsilon_n \rightarrow 0$. Next we consider the second summand in (\ref{eq proof convolution telesum last}). Applying (\ref{telescoping sum convolution}) to $F_n^{*(k-1)}$ and $F^{*(k-1)}$ and subsequently part (ii) of Lemma \ref{lemma preceding qHD of compound} to the summands in $H_{k-1}(F_n,F)$ we have
$$
    \|F_n^{*(k-1)}-F^{*(k-1)}\|_{\phi_{\lambda'}} \leq 2^{\lambda'}(k-1)\|F_n-F\|_{\phi_{\lambda'}} \big(1+2^{\lambda'}(2^{\lambda'-1} \vee 1)(2+((k-2))^{\lambda' \vee 1}C_2)\big).
$$
Clearly for every $k$ this term goes to zero 0 as $n \rightarrow \infty$, because $\|F_n-F\|_{\phi_{\lambda'}} \leq \|F_n-F\|_{\phi_{\lambda}} \rightarrow 0$ as $n \rightarrow \infty$ by assumption. This together with the fact that (\ref{eq proof convolution applying 2.3}) goes to zero 0 as $n \rightarrow \infty$ shows that (\ref{eq proof convolution telesum last}) goes to zero in  $\| \cdot \|_{\phi_{\lambda'}}$ as $n \rightarrow \infty$. Therefore, the second summand in (\ref{eq proof convoultion triangle three times}) goes to zero as $n \rightarrow \infty$.

It remains to consider the first term in (\ref{eq proof convoultion triangle three times}). We find
 \begin{eqnarray}\label{eq proof Pitts very last hopefully}
    \lefteqn{2^{\lambda'}\|\widetilde{v}-v\|_{\phi_{\lambda}}  \big(\|\eins_{[0,\infty)} - (F_n+\varepsilon_nv_n)^{*(k-1-\ell)}*F_n^{*\ell}\|_{\phi_{\lambda'}}+1\big)}\nonumber\\
    & \le &  2^{\lambda'}\|\widetilde{v}-v\|_{\phi_{\lambda'}}  \big(\|\eins_{[0,\infty)}- (F_n+\varepsilon_nv_n)^{*(k-1-\ell)}*F_n^{*\ell}\|_{\phi_{\lambda'}}+1\big)\nonumber\\
    & \le & 2^{\lambda'}\|\widetilde{v}-v\|_{\phi_{\lambda'}}  \big(\|\eins_{[0,\infty)}-F^{*(k-1)} + F^{*(k-1)} - (F_n+\varepsilon_nv_n)^{*(k-1-\ell)}*F_n^{*\ell}\|_{\phi_{\lambda'}}+1\big) \nonumber \\
    & \le & 2^{\lambda'}\|\widetilde{v}-v\|_{\phi_{\lambda'}}  \big(\|\eins_{[0,\infty)}-F^{*(k-1)}\|_{\phi_{\lambda'}} + \|F^{*(k-1)} - (F_n+\varepsilon_nv_n)^{*(k-1-\ell)}*F_n^{*\ell}\|_{\phi_{\lambda'}}+1\big) \nonumber \\
    & \le & 2^{\lambda'}\|\widetilde{v}-v\|_{\phi_{\lambda'}}  (2^{\lambda'-1} \vee 1)\Big(1+k^{\lambda \vee 1} \int |x|^{\lambda'}\,dF(x)\Big) \nonumber \\
    & &  +\,2^{\lambda'}\|\widetilde{v}-v\|_{\phi_{\lambda'}} \big(\big\|F^{*(k-1)} - (F_n+\varepsilon_nv_n)^{*(k-1-\ell)}*F_n^{*\ell}\big\|_{\phi_{\lambda'}}+1\big),
\end{eqnarray}
where for the last inequality we used Formula (2.4) of \cite{Pitts1994}. In the lines following (\ref{eq proof convolution telesum last}) we showed that
$\|F^{*(k-1)} - (F_n+\varepsilon_nv_n)^{*(k-1-\ell)}*F_n^{*\ell}\|_{\phi_{\lambda'}}$ goes to zero as $n \rightarrow \infty$ for every $k$ and $\ell\in\{0,\ldots,k-1\}$. Hence for every such $k$ and $\ell$,  it is uniformly bounded in $n\in\N$. Therefore we can make (\ref{eq proof Pitts very last hopefully}) arbitrarily small by making $\|\widetilde{v}-v\|_{\phi_{\lambda'}}$ small which, as mentioned above, is possible according to Lemma 4.2 of \cite{Pitts1994}. This finishes the proof.
\end{proof}


\subsection{Composition of Average Value at Risk functional and compound distribution functional}\label{Composition functional}

Here we consider the composition of the Average Value at Risk functional ${\cal R}_\alpha$ defined in (\ref{def avarf}) and the compound distribution functional ${\cal C}_p$ defined in (\ref{def codifu}). As a consequence of Propositions \ref{proposition AVaR} and \ref{QHD of CF} we obtain the following Corollary \ref{QHD of Composition}. Note that, for any $\lambda>1$, Lemma 2.2 in \cite{Pitts1994} yields ${\cal C}_p(\mathbf{F}_{\phi_{\lambda}}) \subseteq \mathbf{F}_1$ so that the composition ${\cal R}_\alpha\circ{\cal C}_{p}$ is well defined on $\F_{\phi_\lambda}$.

\begin{corollary}\label{QHD of Composition}
Assume that $\sum_{k=1}^{\infty} p_k\,k^{(1+\lambda)\vee 2} < \infty$. Let $\lambda>1$, $F \in \mathbf{F}_{\phi_{\lambda}}$, and assume that ${\cal C}_p(F)$ takes the value $1-\alpha$ only once. Then the map $T_{\alpha,p}:={\cal R}_\alpha\circ{\cal C}_{p}: \mathbf{F}_{\phi_{\lambda}} (\subseteq \mathbf{D})  \rightarrow \R$ is uniformly quasi-Hadamard differentiable at $F$ tangentially to $\bD_{\phi_{\lambda}}\langle\bD_{\phi_{\lambda}}\rangle$, and the uniform quasi-Hadamard derivative $\dot T_{\alpha,p;F}:\bD_{\phi_\lambda}\rightarrow\R$ is given by $\dot T_{\alpha,p;F}=\dot{\cal R}_{\alpha;{\cal C}_p(F)} \circ \dot{{\cal C}}_{p;F}$, i.e.
$$
    \dot T_{\alpha,p;F}(v)=\int g_\alpha'({\cal C}_p(F)(x))(v*H_{p,F})(x)\,dx\qquad\mbox{for all }v\in\bD_{\phi_\lambda}
$$
with $g_\alpha'$ and $v*H_{p,F}$ as in Proposition \ref{proposition AVaR} and \ref{QHD of CF}, respectively.
\end{corollary}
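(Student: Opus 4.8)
The plan is to deduce the corollary as an instance of the chain rule for uniformly quasi-Hadamard differentiable maps proved in Appendix \ref{Delta-method for uniformly QHD maps}, applied to the inner map ${\cal C}_p$ and the outer map ${\cal R}_\alpha$. The first and organizing step is to fix an intermediate exponent $\lambda'$ with $1<\lambda'<\lambda$; such a choice exists precisely because the corollary assumes $\lambda>1$. The purpose of $\lambda'$ is to align the trace of the inner map with the tangent space of the outer map: by Proposition \ref{QHD of CF} the map ${\cal C}_p$ is uniformly quasi-Hadamard differentiable at $F$ tangentially to $\bD_{\phi_{\lambda}}\langle\bD_{\phi_{\lambda}}\rangle$ with trace $\bD_{\phi_{\lambda'}}$ and derivative $\dot{\cal C}_{p;F}:\bD_{\phi_{\lambda}}\rightarrow\bD_{\phi_{\lambda'}}$, so that both the values of $\dot{\cal C}_{p;F}$ and the difference quotients of ${\cal C}_p$ live in $\bD_{\phi_{\lambda'}}$.

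Next I would check that Proposition \ref{proposition AVaR} applies to the outer map ${\cal R}_\alpha$ at the base point ${\cal C}_p(F)$ with weight function $\phi_{\lambda'}$. Here ${\cal C}_p(F)\in\mathbf{F}_1$ by Lemma 2.2 of \cite{Pitts1994} (as noted before the corollary), so that ${\cal C}_p(F)$ lies in the domain of ${\cal R}_\alpha$; the hypothesis that ${\cal C}_p(F)$ takes the value $1-\alpha$ only once is exactly the single-crossing assumption of Proposition \ref{proposition AVaR}; and $\int 1/\phi_{\lambda'}(x)\,dx=\int(1+|x|)^{-\lambda'}\,dx<\infty$ because $\lambda'>1$. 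Proposition \ref{proposition AVaR} then yields that ${\cal R}_\alpha$ is uniformly quasi-Hadamard differentiable at ${\cal C}_p(F)$ tangentially to $\bD_{\phi_{\lambda'}}\langle\bD_{\phi_{\lambda'}}\rangle$ with respect to the family ${\cal S}_{\cal R}$ of all sequences in $\mathbf{F}_1$ converging pointwise to ${\cal C}_p(F)$, with derivative $\dot{\cal R}_{\alpha;{\cal C}_p(F)}$.

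The remaining, and to my mind the essential, step is the compatibility condition that couples the two maps: the image under ${\cal C}_p$ of any sequence admissible for ${\cal C}_p$ at $F$ must be admissible for ${\cal R}_\alpha$ at ${\cal C}_p(F)$. Concretely, given a quadruple $((F_n),v,(v_n),(\varepsilon_n))$ as in Definition \ref{definition quasi hadamard} with $\|F_n-F\|_{\phi_{\lambda}}\to 0$, I would set $G_n:={\cal C}_p(F_n)$, $w_n:=({\cal C}_p(F_n+\varepsilon_nv_n)-{\cal C}_p(F_n))/\varepsilon_n$, and $w:=\dot{\cal C}_{p;F}(v)$, so that ${\cal C}_p(F_n+\varepsilon_nv_n)=G_n+\varepsilon_nw_n$. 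The Lipschitz-type bounds of Lemma \ref{lemma preceding qHD of compound} underlying Proposition \ref{QHD of CF} give $\|{\cal C}_p(F_n)-{\cal C}_p(F)\|_{\phi_{\lambda'}}\to 0$, hence $G_n\to{\cal C}_p(F)$ pointwise and $(G_n)\in{\cal S}_{\cal R}$; moreover $\|w_n-w\|_{\phi_{\lambda'}}\to 0$ by Proposition \ref{QHD of CF}, and $G_n+\varepsilon_nw_n={\cal C}_p(F_n+\varepsilon_nv_n)\in{\cal C}_p(\mathbf{F}_{\phi_{\lambda}})\subseteq\mathbf{F}_1$ again by Lemma 2.2 of \cite{Pitts1994}. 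Feeding the quadruple $((G_n),w,(w_n),(\varepsilon_n))$ into the uniform quasi-Hadamard differentiability of ${\cal R}_\alpha$ established above then gives
$$
    \frac{T_{\alpha,p}(F_n+\varepsilon_nv_n)-T_{\alpha,p}(F_n)}{\varepsilon_n}
    =\frac{{\cal R}_\alpha(G_n+\varepsilon_nw_n)-{\cal R}_\alpha(G_n)}{\varepsilon_n}
    \longrightarrow\dot{\cal R}_{\alpha;{\cal C}_p(F)}(\dot{\cal C}_{p;F}(v))=\dot T_{\alpha,p;F}(v).
$$
Finally, continuity of $\dot T_{\alpha,p;F}=\dot{\cal R}_{\alpha;{\cal C}_p(F)}\circ\dot{\cal C}_{p;F}$ on $\bD_{\phi_{\lambda}}$ follows by composing the $(\|\cdot\|_{\phi_{\lambda}},\|\cdot\|_{\phi_{\lambda'}})$-continuity of $\dot{\cal C}_{p;F}$ with the $\|\cdot\|_{\phi_{\lambda'}}$-continuity of $\dot{\cal R}_{\alpha;{\cal C}_p(F)}$, both of which are part of Propositions \ref{QHD of CF} and \ref{proposition AVaR}. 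I expect the only genuine difficulty to be the bookkeeping in this compatibility step, i.e.\ ensuring that the intermediate sequences $(G_n)$ and $(w_n)$ produced by the inner map land inside the tangent and admissibility data required by the outer map; this is exactly the content that the chain rule in Appendix \ref{Delta-method for uniformly QHD maps} is designed to isolate and discharge.
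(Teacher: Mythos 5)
Your proposal is correct and follows essentially the same route as the paper: the paper verifies assumptions (a)--(c) of Lemma \ref{lemma chain rule} (pointwise convergence of $({\cal C}_p(F_n))$ via the stronger $\|{\cal C}_p(F_n)-{\cal C}_p(F)\|_{\phi_{\lambda'}}\to 0$ obtained from the telescoping identity and Lemma \ref{lemma preceding qHD of compound}, inner differentiability with trace $\bD_{\phi_{\lambda'}}$ and $\dot{\cal C}_{p;F}(\bD_{\phi_\lambda})\subseteq\bD_{\phi_{\lambda'}}$, and outer differentiability of ${\cal R}_\alpha$ at ${\cal C}_p(F)$ for a $\lambda'\in(1,\lambda)$), while you merely unfold the chain rule's quadruple construction inline instead of citing it as a black box. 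All your verifications, including the choice of $\lambda'$ and the integrability $\int 1/\phi_{\lambda'}(x)\,dx<\infty$, coincide with the paper's.
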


\begin{proof}
We intend to apply Lemma \ref{lemma chain rule} to $H={\cal C}_p:\F_{\phi_{\lambda}}\rightarrow \F_{1}$ and $\widetilde H={\cal R}_\alpha: \F_{1} \rightarrow\R$. To verify that the assumptions of the lemma are fulfilled, we first recall from the comment directly before Corollary \ref{QHD of Composition} that ${\cal C}_p(\mathbf{F}_{\phi_{\lambda}}) \subseteq \mathbf{F}_1$. It remains to show that the assumptions (a)--(c) of Lemma \ref{lemma chain rule} are fulfilled. According to Proposition \ref{QHD of CF} we have that for every $\lambda' \in (1,\lambda)$ the functional ${\cal C}_p$ is uniformly quasi-Hadamard differentiable at $F$ tangentially to $\bD_{\phi_{\lambda}}\langle\bD_{\phi_{\lambda}}\rangle$ with trace $\bD_{\phi_{\lambda'}}$, which is the first part of assumption (b). The second part of assumption (b) means $\dot{\cal C}_{p,F}(\bD_{\phi_{\lambda}}) \subseteq \bD_{\phi_{\lambda'}}$ and follows from
\begin{eqnarray*}
    \|\dot{\cal C}_{p;F}(v)\|_{\phi_{\lambda'}}
    & = & \Big\| v*\sum_{k=1}^{\infty} p_k\,k F^{*(k-1)}\Big\|_{\phi_{\lambda'}} \\
    & \le & 2^{\lambda'} \|v\|_{\phi_{\lambda'}} \sum_{k=1}^{\infty} p_k\,k\Big(1+(2^{\lambda'-1} \vee 1)\Big(1+k^{\lambda' \vee 1}\int |x|^{\lambda'}\,dF(x)\Big)\Big)
\end{eqnarray*}
(for which we applied Lemma 2.3 and Inequality (2.4) in \cite{Pitts1994}), the convergence of the latter series (which holds by assumption), and  $\|v\|_{\phi_{\lambda'}}\leq \|v\|_{\phi_{\lambda}}<\infty$. Further, it follows from Proposition \ref{proposition AVaR} that the map ${\cal R}_{\alpha}$ is uniformly quasi-Hadamard differentiable tangentially to $\bD_{\phi_{\lambda'}}\langle\bD_{\phi_{\lambda'}}\rangle$ at every distribution function of $\F_{\phi_{\lambda'}}$ that takes the value $1 - \alpha$ only once. This is assumption (c) of Lemma \ref{lemma chain rule}.

It remains to show that also assumption (a) of Lemma \ref{lemma chain rule} holds true. In the present setting assumption (a) means that for every sequence $(F_n) \subseteq\F_{\phi_\lambda}$ with $\|F_n-F\|_{\phi_{\lambda}} \rightarrow 0$ we have ${\cal C}_p(F_n)\to{\cal C}_p(F)$ pointwise. We will show that we even have $\|{\cal C}_p(F_n)-{\cal C}_p(F)\|_{\phi_{\lambda'}} \rightarrow 0$. So let $(F_n)\subseteq \F_{\phi_\lambda}$. Then
\begin{eqnarray*}
    \|{\cal C}_p(F_n)-{\cal C}_p(F)\|_{\phi_{\lambda'}}
    & = & \Big\|\sum_{k=1}^{\infty} p_k(F_n^{*k}-F^{*k})\Big\|_{\phi_{\lambda'}}\\
    & = & \Big\| (F_n-F)* \sum_{k=1}^{\infty} p_k H_k(F_n,F)\Big\|_{\phi_{\lambda'}}\\
    & \le & 2^{\lambda'}\|F_n-F\|_{\phi_{\lambda'}} \sum_{k=1}^{\infty} p_k\, k \big(1+2^{\lambda'}(2^{\lambda'-1} \vee 1)\big(2+(k-1)^{\lambda' \vee 1}C_2\big)\big),
\end{eqnarray*}
where we used (\ref{telescoping sum convolution}) for the second ``$=$'' and applied part (ii) of Lemma \ref{lemma preceding qHD of compound} to the summands of $H_k$ to obtain the latter inequality. Since the series converges, we obtain $\|{\cal C}_p(F_n)-{\cal C}_p(F)\|_{\phi_{\lambda'}} \rightarrow 0$ when assuming $\|F_n-F\|_{\phi_{\lambda}} \rightarrow 0$.
\end{proof}

As an immediate consequence of Theorem \ref{modified delta method for the bootstrap - II}, Examples \ref{iid bootstrap} and \ref{block bootstrap}, and Corollary \ref{QHD of Composition} we obtain the following corollary.

\begin{corollary}
Let $F$, $\widehat F_n$, $\widehat F_n^*$, $\widehat C_n$, and $B_F$ be as in Example \ref{iid bootstrap} (S1.\ or S2.) or as in Example \ref{block bootstrap} respectively, and assume that the assumptions discussed in Example \ref{iid bootstrap} or in Example \ref{block bootstrap} respectively are fulfilled for some weight function $\phi$ with $\int 1/\phi(x)\,dx<\infty$ (in particular $F\in\F_1$). Then
$$
    \sqrt{n}\big(T_{\alpha,p}(\widehat F_n)-T_{\alpha,p}(F)\big)\,\leadsto\,\dot T_{\alpha,p;F}(B_F)\qquad\mbox{in $(\R,{\cal B}(\R))$}
$$
and
$$
    \sqrt{n}\big(T_{\alpha,p}(\widehat F_n^*(\omega,\cdot))-T_{\alpha,p}(\widehat C_n(\omega))\big)\,\leadsto\,\dot T_{\alpha,p;F}(B_F)\qquad\mbox{in $(\R,{\cal B}(\R))$},\qquad\mbox{$\pr$-a.e.\ $\omega$}.
$$
\end{corollary}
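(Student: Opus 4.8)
The plan is to read off the two displayed convergences from the abstract delta-methods already at our disposal, namely Corollary~\ref{modified delta method for non bootstrap - II} for the first display and Corollary~\ref{modified delta method for the bootstrap - II} for the second, specialised to $H:=T_{\alpha,p}$, $\widetilde\V=\widetilde\bE=\R$, weight function $\phi=\phi_\lambda$ (so that the hypothesis $\int 1/\phi\,dx<\infty$ is precisely $\lambda>1$), domain $\bD(H)=\F_{\phi_\lambda}$, scaling $a_n:=\sqrt n$, limit $B:=B_F$, and separable tangent space $\bC_\phi:=\bC_{\phi_\lambda,F}$. Two simplifications are immediate: because $\widetilde\bE=\R$ is separable, condition~(f) of Corollary~\ref{modified delta method for the bootstrap - II} holds automatically by Remark~\ref{modified delta method for the bootstrap - remark}, and the open-ball convergence $\leadsto^\circ$ in the conclusions reduces to ordinary convergence in distribution $\leadsto$. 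Everything then comes down to checking conditions (a)--(e), each of which is supplied by a result proved earlier; no new analytic estimate is required.

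Conditions (a) and (e) I would import verbatim from the bootstrap examples. For settings S1.\ and S2.\ of Example~\ref{iid bootstrap} one has $\widehat C_n=\widehat F_n$, the weak convergences $\sqrt n(\widehat F_n-F)\leadsto^\circ B_F$ and $\sqrt n(\widehat F_n^*(\omega,\cdot)-\widehat C_n(\omega))\leadsto^\circ B_F$ for $\pr$-a.e.\ $\omega$ with $B_F(\Omega_0)\subseteq\bC_{\phi_\lambda,F}$, and condition~(e) holds since $\widehat C_n=\widehat F_n\to F$ in $\|\cdot\|_{\phi_\lambda}$ $\pr$-a.s.; for Example~\ref{block bootstrap} the second part of (a) and condition~(e) are exactly Theorem~\ref{corollary bracketing weight functions}. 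In each case the relevant family is ${\cal S}=\{(G_n)\subseteq\F_{\phi_\lambda}:G_n-F\in\bD_{\phi_\lambda},\ \|G_n-F\|_{\phi_\lambda}\to0\}$, and for the non-bootstrap display one simply takes the constant sequence $F_n:=F$.

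Condition~(c) is Corollary~\ref{QHD of Composition}: under the present hypotheses $T_{\alpha,p}$ is uniformly quasi-Hadamard differentiable at $F$ tangentially to $\bD_{\phi_\lambda}\langle\bD_{\phi_\lambda}\rangle$ with derivative $\dot T_{\alpha,p;F}$. By part~(ii) of Definition~\ref{definition quasi hadamard} this is differentiability with respect to precisely the set ${\cal S}$ above, and shrinking the tangent cone from $\bD_{\phi_\lambda}$ to the separable subspace $\bC_{\phi_\lambda,F}$ only weakens the requirement, so condition~(c) (with trace $\R$) holds. For condition~(d) I would note that $\dot T_{\alpha,p;F}(v)=\int g_\alpha'({\cal C}_p(F)(x))\,(v*H_{p,F})(x)\,dx$ is already defined and $\|\cdot\|_{\phi_\lambda}$-continuous on all of $\bD_{\phi_\lambda}$, being the composition $\dot{\cal R}_{\alpha;{\cal C}_p(F)}\circ\dot{\cal C}_{p;F}$ of the continuous maps from the proofs of Propositions~\ref{proposition AVaR} and~\ref{QHD of CF}; hence no genuine extension is needed and it is continuous at every point of $\bC_{\phi_\lambda,F}$.

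The only points not reducible to a citation are the measurability clauses in (b) and (d), which I expect to be the main (though routine) obstacle. Here I would exploit that ${\cal B}_{\phi_\lambda}^\circ={\cal D}_{\phi_\lambda}$ is generated by the coordinate projections $\pi_t$: the map $v\mapsto(v*H_{p,F})(t)=\int v(t-y)\,dH_{p,F}(y)$ is a pointwise limit of finite linear combinations of the $\pi_{t-y_j}$, hence ${\cal D}_{\phi_\lambda}$-measurable, and integrating against $g_\alpha'({\cal C}_p(F)(\cdot))$ in the same way yields $({\cal B}_{\phi_\lambda}^\circ,{\cal B}(\R))$-measurability of $\dot T_{\alpha,p;F}$, giving (d). An analogous Riemann-sum argument, applied to $\widehat F_n^{*k}$ and the integral formula for ${\cal R}_\alpha$, shows that $\sqrt n(T_{\alpha,p}(\widehat F_n)-T_{\alpha,p}(F))$ and its bootstrap counterpart are measurable, so that (b) holds (its trace part being automatic since the trace is $\R$). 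With (a)--(f) in place, Corollaries~\ref{modified delta method for non bootstrap - II} and~\ref{modified delta method for the bootstrap - II} deliver the two displays with limit $\dot H_{\cal S}(B_F)=\dot T_{\alpha,p;F}(B_F)$.
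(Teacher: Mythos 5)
Your proposal is correct and follows essentially the paper's own route: the paper presents this corollary as an immediate consequence of Corollary~\ref{modified delta method for non bootstrap - II} and Corollary~\ref{modified delta method for the bootstrap - II} (applied with $H=T_{\alpha,p}$, $\phi=\phi_\lambda$, $a_n=\sqrt n$, $\bC_\phi=\bC_{\phi_\lambda,F}$), with Examples~\ref{iid bootstrap} and~\ref{block bootstrap} supplying conditions (a) and (e) and Corollary~\ref{QHD of Composition} supplying condition (c), which is exactly the reduction you carry out. The only difference is that you additionally spell out the routine verifications of the measurability clauses in conditions (b) and (d) and of condition (f) via separability of $\R$, which the paper leaves implicit.
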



\appendix

\section{Convergence in distribution$^\circ$}\label{Appendix Weak Convergence}

Let $(\bE,d)$ be a metric space and ${\cal B}^\circ$ be the $\sigma$-algebra on $\bE$ generated by the open balls $B_r(x):=\{y\in\bE:d(x,y)<r\}$, $x\in\bE$, $r>0$. We will refer to ${\cal B}^\circ$ as {\em open-ball $\sigma$-algebra}. If $(\bE,d)$ is separable, then ${\cal B}^\circ$ coincides with the Borel $\sigma$-algebra ${\cal B}$. If $(\bE,d)$ is not separable, then ${\cal B}^\circ$ might be strictly smaller than ${\cal B}$ and thus a continuous real-valued function on $\bE$ is not necessarily $({\cal B}^\circ,{\cal B}(\R))$-measurable. Let $C_{\rm b}^\circ$ be the set of all bounded, continuous and $({\cal B}^\circ,{\cal B}(\R))$-measurable real-valued functions on $\bE$, and ${\cal M}_1^\circ$ be the set of all probability measures on $(\bE,{\cal B}^\circ)$.

Let $X_n$ be an $(\bE,{\cal B}^\circ)$-valued random variable on some probability space $(\Omega_n,{\cal F}_n,\pr_n)$ for every $n\in\N_0$. Then the sequence $(X_n)=(X_n)_{n\in\N}$ is said to {\em converge in distribution$^\circ$} to $X_0$ if
$$
    \int f\,d\pr\circ X_n^{-1}\,\longrightarrow\,\int f\,d\pr_0\circ X_0^{-1}\qquad\mbox{for all }f\in C_{\rm b}^\circ.
$$
In this case, we write $X_n\leadsto^\circ X_0$. This is the same as saying that the sequence $(\pr_n\circ X_n^{-1})$ converges to $\pr_0\circ X_0^{-1}$ in the weak$^\circ$ topology on ${\cal M}_1^\circ$; for details see the Appendix A of \cite{BeutnerZaehle2016}. It is worth mentioning that two probability measures $\mu,\nu\in{\cal M}_1^\circ$ coincide if $\mu[\bE_0]=\nu[\bE_0]=1$ for some separable $\bE_0\in{\cal B}^\circ$ and $\int f\,d\mu=\int f\,d\nu$ for all uniformly continuous $f\in C_{\rm b}^\circ$; see, for instance, \cite[Theorem 6.2]{Billingsley1999}.

In the Appendices A--C in \cite{BeutnerZaehle2016} several properties of  convergence in distribution$^\circ$ (and weak$^\circ$ convergence) have been discussed. The following two subsections complement this discussion.


\subsection{Slutsky-type results for the open-ball $\sigma$-algebra}\label{Slutzky type results}

For a sequence $(X_n)$ of $(\bE,{\cal B}^\circ)$-valued random variables that are all defined on the same probability space $(\Omega,{\cal F},\pr)$, the sequence $(X_n)$ is said to {\em converge in probability$^\circ$} to $X_0$ if the mappings $\omega\mapsto d(X_n(\omega),X_0(\omega))$, $n\in\N$, are $({\cal F},{\cal B}(\R_+))$-measurable and satisfy
\begin{equation}\label{def conv in prob - eq}
    \lim_{n\to\infty}\pr[d(X_n,X_0)\ge\varepsilon]=0\quad\mbox{ for all }\varepsilon>0.
\end{equation}
In this case, we write $X_n\rightarrow^{{\sf p},\circ} X_0$. The superscript $^\circ$ points to the fact that measurability of the mapping $\omega\mapsto d(X_n(\omega),X_0(\omega))$ is a requirement of the definition (and not automatically valid). Note however that in the specific situation where $X_0\equiv x_0$ for some $x_0\in\bE$, measurability of the mapping $\omega\mapsto d(X_n(\omega),X_0(\omega))$ does hold; cf.\ Lemma B.3 in \cite{BeutnerZaehle2016}. Also note that the measurability always hold when $(\bE,d)$ is separable; in this case we also write $\rightarrow^{{\sf p}}$ instead of $\rightarrow^{{\sf p},\circ}$.

\begin{theorem}\label{Slutzky}
Let $(X_n)$ and $(Y_n)$ be two sequences of $(\bE,{\cal B}^\circ)$-valued random variables on a common probability space $(\Omega,{\cal F},\pr)$, and assume that the mapping $\omega\mapsto d(X_n(\omega),Y_n(\omega))$ is $({\cal F},{\cal B}(\R_+))$-measurable for every $n\in\N$. Let $X_0$ be an $(\bE,{\cal B}^\circ)$-valued random variable on some probability space $(\Omega_0,{\cal F}_0,\pr_0)$ with $\pr_0[X_0\in\bE_0]=1$ for some separable $\bE_0\in{\cal B}^\circ$. Then $X_n\leadsto^\circ X_{0}$ and $d(X_n,Y_n)\rightarrow^{{\sf p}} 0$ together imply $Y_n\leadsto^\circ X_{0}$.
\end{theorem}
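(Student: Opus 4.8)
The plan is to transfer the problem to the bounded Lipschitz distance $d_{\rm BL}^\circ$ and then to run the classical Slutsky argument at the level of this metric. Write $\mu_n:=\pr\circ X_n^{-1}$, $\nu_n:=\pr\circ Y_n^{-1}$ and $\mu_0:=\pr_0\circ X_0^{-1}$; all three belong to ${\cal M}_1^\circ$. Since the limiting law $\mu_0$ is concentrated on the separable set $\bE_0\in{\cal B}^\circ$, the characterization of weak$^\circ$ convergence recalled in the introduction (Portmanteau theorem A.3 in \cite{BeutnerZaehle2016}) is available for limits equal to $\mu_0$: for a sequence of $({\cal F},{\cal B}^\circ)$-valued random variables, convergence in distribution$^\circ$ to $X_0$ is equivalent to convergence of the corresponding laws to $\mu_0$ in $d_{\rm BL}^\circ$. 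Applying this to $(X_n)$, the hypothesis $X_n\leadsto^\circ X_0$ gives $d_{\rm BL}^\circ(\mu_n,\mu_0)\to0$, and by the triangle inequality for $d_{\rm BL}^\circ$,
\begin{equation*}
    d_{\rm BL}^\circ(\nu_n,\mu_0)\,\le\,d_{\rm BL}^\circ(\nu_n,\mu_n)+d_{\rm BL}^\circ(\mu_n,\mu_0).
\end{equation*}
Thus it suffices to prove $d_{\rm BL}^\circ(\nu_n,\mu_n)\to0$, for then $d_{\rm BL}^\circ(\nu_n,\mu_0)\to0$ and the same characterization applied to $(Y_n)$ yields $Y_n\leadsto^\circ X_0$.

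Next I would bound $d_{\rm BL}^\circ(\nu_n,\mu_n)$ by a single expectation. By definition $d_{\rm BL}^\circ(\nu_n,\mu_n)$ is the supremum of $|\ex[f(Y_n)]-\ex[f(X_n)]|$ over the ${\cal B}^\circ$-measurable bounded Lipschitz functions $f$ entering its definition, each of which satisfies $\|f\|_\infty\le1$ and has Lipschitz constant at most $1$. For any such $f$ the compositions $f(X_n)$ and $f(Y_n)$ are $({\cal F},{\cal B}(\R))$-measurable, and pointwise
\begin{equation*}
    |f(Y_n)-f(X_n)|\,\le\,\min\{d(X_n,Y_n),2\},
\end{equation*}
using the Lipschitz property for the bound by $d(X_n,Y_n)$ and $\|f\|_\infty\le1$ for the bound by $2$. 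The right-hand side is $({\cal F},{\cal B}(\R))$-measurable precisely because of the standing assumption that $\omega\mapsto d(X_n(\omega),Y_n(\omega))$ is $({\cal F},{\cal B}(\R_+))$-measurable. Taking expectations and then the supremum over $f$ gives
\begin{equation*}
    d_{\rm BL}^\circ(\nu_n,\mu_n)\,\le\,\ex\big[\min\{d(X_n,Y_n),2\}\big].
\end{equation*}

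Finally I would let the right-hand side tend to $0$. Since $d(X_n,Y_n)\rightarrow^{{\sf p}}0$, for each $\varepsilon\in(0,2)$ splitting the expectation on $\{d(X_n,Y_n)<\varepsilon\}$ and its complement yields $\ex[\min\{d(X_n,Y_n),2\}]\le\varepsilon+2\,\pr[d(X_n,Y_n)\ge\varepsilon]$, so that $\limsup_{n\to\infty}\ex[\min\{d(X_n,Y_n),2\}]\le\varepsilon$; letting $\varepsilon\downarrow0$ gives $\ex[\min\{d(X_n,Y_n),2\}]\to0$. Combining the three displays shows $d_{\rm BL}^\circ(\nu_n,\mu_0)\to0$, hence $Y_n\leadsto^\circ X_0$. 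The only genuinely delicate point, and the step I would watch most carefully, is the passage to $d_{\rm BL}^\circ$ in the first paragraph: it is legitimate only because $X_0$ is concentrated on a separable set $\bE_0\in{\cal B}^\circ$, which is what makes the open-ball $\sigma$-algebra well behaved enough for the Portmanteau characterization to apply; the remaining measurability requirements are all covered by the hypothesis on $\omega\mapsto d(X_n(\omega),Y_n(\omega))$.
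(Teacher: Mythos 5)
Your proposal is correct and takes essentially the same route as the paper: the paper's proof likewise fixes $f\in{\rm BL}_1^\circ$, applies the triangle inequality through $X_n$, bounds $\limsup_n\int|f(Y_n)-f(X_n)|\,d\pr$ by splitting on $\{d(X_n,Y_n)\ge\varepsilon\}$ (using the measurability hypothesis and $d(X_n,Y_n)\rightarrow^{{\sf p}}0$), and concludes via the Portmanteau theorem of \cite{BeutnerZaehle2016}, which is where the separability of $\bE_0$ enters. Your only difference is cosmetic, packaging the identical estimates as convergence in the metric $d_{\rm BL}^\circ$ instead of testing against each $f$ separately.
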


\begin{proof}
In view of $X_n\leadsto^\circ X$, we obtain for every fixed $f\in{\rm BL}_1^\circ$
\begin{eqnarray*}
    \lefteqn{\limsup_{n\to\infty}\Big|\int f\,d\pr_{Y_n}-\int f\,d\pr_{X_0}\Big|}\\
    & \le & \limsup_{n\to\infty}\Big|\int f\,d\pr_{Y_n}-\int f\,d\pr_{X_n}\Big|+\limsup_{n\to\infty}\Big|\int f\,d\pr_{X_n}-\int f\,d\pr_{X_0}\Big|\\
    & \le & \limsup_{n\to\infty}\int |f(Y_n)-f(X_n)|\,d\pr.
\end{eqnarray*}
Since $f$ lies in ${\rm BL}_1^\circ$ and we assumed $d(X_n,Y_n)\rightarrow^{{\sf p}} 0$, we also have
\begin{eqnarray*}
    \limsup_{n\to\infty}\int |f(Y_n)-f(X_n)|\,d\pr
    & \le & \limsup_{n\to\infty}\Big|\int |f(Y_n)-f(X_n)|\eins_{\{d(X_n,Y_n)\ge\varepsilon\}}\,d\pr\,+\,2\varepsilon\\
    & \le & 2\limsup_{n\to\infty}\pr[d(X_n,Y_n)\ge\varepsilon]\,+\,2\varepsilon
\end{eqnarray*}
for every $\varepsilon>0$. Thus $\limsup_{n\to\infty}\int |f(Y_n)-f(X_n)|\,d\pr=0$ which together with the Portmanteau theorem (in the form of \cite[Theorem A.4]{BeutnerZaehle2016}) implies the claim.
\end{proof}

Set $\overline{\bE}:=\bE\times\bE$ and let $\overline{{\cal B}}^\circ$ be the $\sigma$-algebra on $\overline{\bE}$ generated by the open balls w.r.t.\ the metric
$$
    \overline{d}((x_1,x_2),(y_1,y_2)):=\max\{d(x_1,y_1);d(x_2,y_2)\}.
$$
Recall that $\overline{{\cal B}}^\circ\subseteq{\cal B}^\circ\otimes{\cal B}^\circ$, where the inclusion may be strict.

\begin{corollary}\label{Slutzky corollary 1}
Let $(X_n)$ and $(Y_n)$ be two sequences of $(\bE,{\cal B}^\circ)$-valued random variables on a common probability space $(\Omega,{\cal F},\pr)$. Let $X_0$ be an $(\bE,{\cal B}^\circ)$-valued random variable on some probability space $(\Omega_0,{\cal F}_0,\pr_0)$ with $\pr_0[X_0\in\bE_0]=1$ for some separable $\bE_0\in{\cal B}^\circ$. Let $y_0\in\bE_0$. Let $(\widetilde\bE,\widetilde{d})$ be a metric space equipped with the corresponding open-ball $\sigma$-algebra $\widetilde{\cal B}^\circ$. Then $X_n\leadsto^\circ X_0$ and $Y_n\rightarrow^{{\sf p},\circ} y_0$ together imply
\begin{itemize}
    \item[(i)] $(X_n,Y_n)\leadsto^\circ (X_0,y_0)$.
    \item[(ii)]$h(X_n,Y_n)\leadsto^\circ h(X_0,y_0)$ for every continuous and $(\overline{\cal B}^\circ,\widetilde{\cal B}^\circ)$-measurable $h:\overline\bE\rightarrow\widetilde\bE$.
\end{itemize}
\end{corollary}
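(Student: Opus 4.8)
The plan is to obtain part (i) by combining the continuous mapping theorem with the Slutsky-type Theorem \ref{Slutzky}, and then to derive part (ii) by a single further application of the continuous mapping theorem to the conclusion of (i). Throughout I regard $(X_n,y_0)$ and $(X_n,Y_n)$ as $(\overline\bE,\overline{\cal B}^\circ)$-valued random variables; this is legitimate because these maps are $({\cal F},{\cal B}^\circ\otimes{\cal B}^\circ)$-measurable and $\overline{\cal B}^\circ\subseteq{\cal B}^\circ\otimes{\cal B}^\circ$.

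For part (i), I would first introduce the map $\iota:\bE\rightarrow\overline\bE$, $\iota(x):=(x,y_0)$, which is continuous and $({\cal B}^\circ,\overline{\cal B}^\circ)$-measurable: for a generating open ball $B_r(a)\times B_r(b)$ of $\overline{\cal B}^\circ$ the preimage $\iota^{-1}(B_r(a)\times B_r(b))$ equals $B_r(a)$ if $y_0\in B_r(b)$ and equals $\emptyset$ otherwise, and both lie in ${\cal B}^\circ$. Since $X_n\leadsto^\circ X_0$ with $X_0$ concentrated on the separable set $\bE_0\in{\cal B}^\circ$, the continuous mapping theorem in the form of \cite[Theorem 6.4]{Billingsley1999} yields $(X_n,y_0)=\iota(X_n)\leadsto^\circ\iota(X_0)=(X_0,y_0)$. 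Next I would compare $(X_n,y_0)$ and $(X_n,Y_n)$ in $(\overline\bE,\overline d)$: by definition of $\overline d$,
$$
    \overline d\big((X_n,y_0),(X_n,Y_n)\big)=\max\{d(X_n,X_n);d(y_0,Y_n)\}=d(Y_n,y_0),
$$
which is $({\cal F},{\cal B}(\R_+))$-measurable because $y_0$ is constant (cf.\ the remark preceding Theorem \ref{Slutzky} and Lemma B.3 in \cite{BeutnerZaehle2016}), and which converges to $0$ in $\pr$-probability since $Y_n\rightarrow^{{\sf p},\circ}y_0$; hence $\overline d((X_n,y_0),(X_n,Y_n))\rightarrow^{{\sf p}}0$.

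To invoke Theorem \ref{Slutzky} on $\overline\bE$ with $(X_n,y_0)$, $(X_n,Y_n)$, $(X_0,y_0)$ in the roles of $X_n$, $Y_n$, $X_0$, it remains to exhibit a separable $\overline\bE_0\in\overline{\cal B}^\circ$ carrying the law of $(X_0,y_0)$; I expect this separability/measurability bookkeeping to be the main (though routine) obstacle. Let $D=\{d_k\}$ be countable and dense in $\bE_0$ and let $\overline{D}$ be its closure in $\bE$. Then $\overline{D}\times\overline{D}$ is separable, it contains $(X_0,y_0)(\Omega_0)\subseteq\bE_0\times\{y_0\}$ (recall $y_0\in\bE_0\subseteq\overline{D}$), and it lies in $\overline{\cal B}^\circ$ because
$$
    \overline{D}\times\overline{D}=\bigcap_{m\in\N}\bigcup_{k,j\in\N}\big(B_{1/m}(d_k)\times B_{1/m}(d_j)\big),
$$
a countable combination of $\overline d$-open balls of $\overline\bE$ (each $B_{1/m}(d_k)\times B_{1/m}(d_j)$ being the open $\overline d$-ball of radius $1/m$ around $(d_k,d_j)$). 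With $\overline\bE_0:=\overline{D}\times\overline{D}$ in hand, Theorem \ref{Slutzky} applies and gives $(X_n,Y_n)\leadsto^\circ(X_0,y_0)$, which is (i).

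Finally, for part (ii) I would apply the continuous mapping theorem \cite[Theorem 6.4]{Billingsley1999} once more: $h$ is continuous and $(\overline{\cal B}^\circ,\widetilde{\cal B}^\circ)$-measurable, and by part (i) the limit $(X_0,y_0)$ is concentrated on the separable set $\overline{D}\times\overline{D}\in\overline{\cal B}^\circ$, so that $h(X_n,Y_n)\leadsto^\circ h(X_0,y_0)$ in $(\widetilde\bE,\widetilde{\cal B}^\circ)$, completing the proof.
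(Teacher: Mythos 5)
Your proof is correct and takes essentially the same route as the paper's: first show $(X_n,y_0)\leadsto^\circ(X_0,y_0)$, then observe that $\overline d\big((X_n,Y_n),(X_n,y_0)\big)=d(Y_n,y_0)$ is measurable and tends to $0$ in probability, apply Theorem \ref{Slutzky} in $(\overline\bE,\overline d)$ to get (i), and deduce (ii) by the continuous mapping theorem. The only deviations are in the details: the paper proves the first step by testing against $f\in\overline C_{\rm b}^\circ$ (using from \cite{BeutnerZaehle2016} that $x\mapsto f(x,y_0)$ lies in $C_{\rm b}^\circ$) instead of your continuous-mapping argument for the embedding $\iota(x)=(x,y_0)$, and it takes $\bE_0\times\bE_0$ as the separable support set without further comment, whereas your explicit construction of $\overline D\times\overline D\in\overline{\cal B}^\circ$ supplies a measurability detail that the paper leaves implicit.
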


\begin{proof}
Assertion (ii) is an immediate consequence of assertion (i) and the Continuous Mapping theorem in the form of \cite[Theorem 6.4]{Billingsley1999}; take into account that  $(X_0,y_0)$ takes values only in $\overline\bE_0:=\bE_0\times\bE_0$ and that $\bE_0\times\bE_0$ is separable w.r.t.\ $\overline{d}$. Thus it suffices to show assertion (i). First note that we have
\begin{equation}\label{Slutzky corollary 1 - PROOF - 10}
    (X_n,y_0)\,\leadsto^\circ\,(X_0,y_0).
\end{equation}
Indeed, for every $f\in\overline C_{\rm b}^\circ$ (with $\overline C_{\rm b}^\circ$ the set of all bounded, continuous and $(\overline{\cal B}^\circ,{\cal B}(\R))$-measurable real-valued functions on $\overline\bE$) we have $\lim_{n\to\infty}\int f(X_n,y_0)\,d\pr=\int f(X_0,y_0)\,d\pr_0$ by the assumption $X_n\leadsto^\circ X_0$ and the fact that the mapping $x\mapsto f(x,y_0)$ lies in $C_{\rm b}^\circ$ (the latter was shown in the proof of Theorem 3.1 in \cite{BeutnerZaehle2016}).

Second, the distance $\overline{d}((X_n,Y_n),(X_n,y_0))=d(Y_n,y_0)$ is $({\cal F},{\cal B}(\R_+))$-measurable for every $n\in\N$, because $Y_n$ is $({\cal F},{\cal B}^\circ)$-measurable and $x\mapsto d(x,y_0)$ is $({\cal B}^\circ,{\cal B}(\R))$-measurable (due to Lemma B.3 in \cite{BeutnerZaehle2016}). Along with $Y_n\rightarrow^{{\sf p},\circ} y_0$ we obtain in particular that $\overline{d}((X_n,Y_n),(X_n,y_0))\rightarrow^{{\sf p}}0$. Together with (\ref{Slutzky corollary 1 - PROOF - 10}) and Theorem \ref{Slutzky} (applied to $X_n':=(X_n,y_0)$, $X_0':=(X_0,y_0)$, $Y_n':=(X_n,Y_n)$) this implies $(X_n,Y_n)\leadsto^\circ(X_0,y_0)$; take into account again that $(X_0,y_0)$ takes values only in $\overline\bE_0:=\bE_0\times\bE_0$ and that $\bE_0\times\bE_0$ is separable w.r.t.\ $\overline{d}$.
\end{proof}

\begin{corollary}\label{Slutzky corollary 2}
Let $(\bE,\|\cdot\|_\bE)$ be a normed vector space and $d$ be the induced metric defined by $d(x_1,x_2):=\|x_1-x_2\|_\bE$. Let $(X_n)$ and $(Y_n)$ be two sequences of $(\bE,{\cal B}^\circ)$-valued random variables on a common probability space $(\Omega,{\cal F},\pr)$. Let $X_0$ be an $(\bE,{\cal B}^\circ)$-valued random variable on some probability space $(\Omega_0,{\cal F}_0,\pr_0)$ with $\pr_0[X_0\in\bE_0]=1$ for some separable $\bE_0\in{\cal B}^\circ$. Let $y_0\in\bE_0$. Assume that the map $h:\overline{\bE}\rightarrow\bE$ defined by $h(x_1,x_2):=x_1+x_2$ is $(\overline{{\cal B}}^\circ,{\cal B}^\circ)$-measurable. Then $X_n\leadsto^\circ X_0$ and $Y_n\rightarrow^{{\sf p},\circ} y_0$ together imply $X_n+Y_n\leadsto^\circ X_0+y_0$.
\end{corollary}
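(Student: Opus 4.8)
The plan is to derive the statement directly from part (ii) of Corollary \ref{Slutzky corollary 1}, specialized to the target space $(\widetilde\bE,\widetilde d):=(\bE,d)$ with $\widetilde{\cal B}^\circ:={\cal B}^\circ$ and with the addition map $h(x_1,x_2):=x_1+x_2$ playing the role of the continuous map there. So the whole argument is a matter of checking that the hypotheses of Corollary \ref{Slutzky corollary 1} are met and then reading off its conclusion.

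First I would observe that the standing assumptions on $(X_n)$, $(Y_n)$, $X_0$, the separable set $\bE_0$, and the point $y_0\in\bE_0$ are literally those required in Corollary \ref{Slutzky corollary 1}, and that the two inputs $X_n\leadsto^\circ X_0$ and $Y_n\rightarrow^{{\sf p},\circ} y_0$ are exactly the two convergences assumed there. Hence part (i) of that corollary already delivers $(X_n,Y_n)\leadsto^\circ(X_0,y_0)$ in $(\overline\bE,\overline{\cal B}^\circ)$, which is the bulk of the work.

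Second, to pass from the pair $(X_n,Y_n)$ to the sum $X_n+Y_n$ I would invoke part (ii) of Corollary \ref{Slutzky corollary 1}, applied to $h$. This requires $h$ to be continuous and $(\overline{\cal B}^\circ,{\cal B}^\circ)$-measurable. The measurability is precisely the hypothesis imposed in the statement of the corollary, so nothing has to be proved there. Continuity is immediate from the triangle inequality: for $(x_1,x_2),(y_1,y_2)\in\overline\bE$ one has $\|(x_1+x_2)-(y_1+y_2)\|_\bE\le\|x_1-y_1\|_\bE+\|x_2-y_2\|_\bE\le 2\,\overline d((x_1,x_2),(y_1,y_2))$, so $h$ is in fact Lipschitz, hence continuous, w.r.t.\ $\overline d$ and $\|\cdot\|_\bE$. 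With continuity and measurability of $h$ secured, part (ii) yields $h(X_n,Y_n)\leadsto^\circ h(X_0,y_0)$, that is $X_n+Y_n\leadsto^\circ X_0+y_0$, which is the claim.

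There is essentially no real obstacle here beyond the trivial continuity check, since the substantive joint-convergence statement is supplied wholesale by Corollary \ref{Slutzky corollary 1}. The only conceptual point I would flag is \emph{why} the measurability of $h$ must be assumed rather than taken for granted: because $\overline{\cal B}^\circ$ may be strictly smaller than ${\cal B}^\circ\otimes{\cal B}^\circ$ when $(\bE,\|\cdot\|_\bE)$ fails to be separable, mere continuity of $h$ does not by itself guarantee $(\overline{\cal B}^\circ,{\cal B}^\circ)$-measurability. (When the target is separable this difficulty evaporates automatically, exactly as in Remark \ref{modified delta method for the bootstrap - remark}.) This is the precise role of the explicit measurability hypothesis on $h$ in the statement, and it is what makes the invocation of Corollary \ref{Slutzky corollary 1}(ii) legitimate in the present non-separable setting.
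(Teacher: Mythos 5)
Your proposal is correct and matches the paper's own proof, which likewise deduces the claim immediately from Corollary \ref{Slutzky corollary 1}\,(ii) together with the continuity of the addition map $h$ (your Lipschitz bound via the triangle inequality makes this explicit). Your closing remark on why the $(\overline{\cal B}^\circ,{\cal B}^\circ)$-measurability of $h$ must be assumed rather than derived from continuity in the non-separable case is accurate and consistent with Remark \ref{modified delta method for the bootstrap - remark}.
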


\begin{proof}
The assertion is an immediate consequence of Corollary \ref{Slutzky corollary 1} and the fact that $h$ is clearly continuous (w.r.t.\ $\overline{d}$ and the Euclidean distance $|\cdot|$).
\end{proof}


\subsection{Delta-method and chain rule for uniformly quasi-Hadamard differentiable maps}\label{Delta-method for uniformly QHD maps}

Now assume that $\bE$ is a subspace of a vector space $\V$. Let $\|\cdot\|_\bE$ be a norm on $\bE$ and assume that the metric $d$ is induced by $\|\cdot\|_\bE$. Let $\widetilde\V$ be another vector space and $\widetilde\bE\subseteq\widetilde\V$ be any subspace. Let $\|\cdot\|_{\widetilde{\bE}}$ be a norm on $\widetilde{\bE}$ and $\widetilde{\cal B}^\circ$ be the corresponding open-ball $\sigma$-algebra on $\widetilde\bE$. Let $0_{\widetilde\bE}$ denote the null in $\widetilde\bE$. Moreover, let  $\overline{\widetilde\bE}:=\widetilde\bE\times\widetilde\bE$ and $\overline{\widetilde{\cal B}^\circ}$ be the $\sigma$-algebra on $\overline{\widetilde\bE}$ generated by the open balls w.r.t.\ the metric $\overline{\widetilde d}((\widetilde x_1,\widetilde x_2),(\widetilde y_1,\widetilde y_2)):=\max\{\|\widetilde x_1-\widetilde y_1\|_{\widetilde\bE};\|\widetilde x_2-\widetilde y_2\|_{\widetilde\bE}\}$.

Let $(\Omega_n,{\cal F}_n,\pr_n)$ be a probability space and $\widehat T_n:\Omega_n\rightarrow \V$ be any map for every $n\in\N$. Recall that $\leadsto^\circ$ and $\rightarrow^{{\sf p},\circ}$ refer to convergence in distribution$^\circ$ and convergence in probability$^\circ$, respectively. Moreover recall Definition \ref{definition quasi hadamard} of quasi-Hadamard differentiability.

\begin{theorem}\label{modified delta method}
Let $H:\V_H\to\widetilde\bE$ be a map defined on some $\V_H \subseteq\V$. Let $\bE_0\in{\cal B}^\circ$ be some $\|\cdot\|_{\bE}$-separable subset of $\bE$. Let $(\theta_n)\subseteq\V_H$ and define the singleton set ${\cal S}:=\{(\theta_n)\}$. Let $(a_n)$ be a sequence of positive real numbers tending to $\infty$, and consider the following conditions:
\begin{itemize}
    \item[(a)] $\widehat T_n$ takes values only in $\V_H$.
    \item[(b)] $a_n(\widehat T_n-\theta_n)$ takes values only in $\bE$, is $({\cal F}_n,{\cal B}^\circ)$-measurable and satisfies
    \begin{eqnarray}\label{modified delta method - assumption}
         a_n(\widehat T_n-\theta_n)\,\leadsto^\circ\,\xi\qquad\mbox{in $(\bE,{\cal B}^\circ,\|\cdot\|_{\bE})$}
     \end{eqnarray}
    for some $(\bE,{\cal B}^\circ)$-valued random variable $\xi$ on some probability space $(\Omega_0,{\cal F}_0,\pr_0)$ with $\xi(\Omega_0)\subseteq\bE_0$.
    \item[(c)] $a_n(H(\widehat T_n)-H(\theta_n))$ takes values only in $\widetilde\bE$ and is $({\cal F}_n,\widetilde{\cal B}^\circ)$-measurable.
    \item[(d)] The map $H$ is uniformly quasi-Hadamard differentiable w.r.t.\ ${\cal S}$ tangentially to $\bE_0\langle\bE\rangle$ with trace $\widetilde{\bE}$ and uniform quasi-Hadamard derivative $\dot H_{\cal S}:\bE_0\rightarrow\widetilde\bE$. 
    \item[(e)] $(\Omega_n,{\cal F}_n,\pr_n)=(\Omega,{\cal F},\pr)$ for all $n\in\N$.
    \item[(f)] The uniform quasi-Hadamard derivative $\dot H_{\cal S}$ can be extended to $\bE$ such that the extension $\dot H_{\cal S}:\bE\rightarrow\widetilde\bE$ is continuous at every point of $\bE_0$ and $({\cal B}^\circ,\widetilde{\cal B}^\circ)$-measurable.
    \item[(g)] The map $h:\overline{\widetilde\bE}\rightarrow\widetilde\bE$ defined by $h(\widetilde x_1,\widetilde x_2):=\widetilde x_1-\widetilde x_2$ is $(\overline{\widetilde{\cal B}^\circ},\widetilde{\cal B}^\circ)$-measurable.
\end{itemize}
Then the following two assertions hold:
\begin{itemize}
    \item[(i)] If conditions (a)--(d) hold true, then $\dot H_{\cal S}(\xi)$ is $({\cal F}_0,\widetilde{\cal B}^\circ)$-measurable and
    $$
        a_n\big(H(\widehat T_n)-H(\theta_n)\big)\,\leadsto^\circ\,\dot H_{\cal S}(\xi)\qquad\mbox{in $(\widetilde\bE,\widetilde{\cal B}^\circ,\|\cdot\|_{\widetilde\bE})$}.
    $$
    \item[(ii)] If conditions (a)--(g) hold true, then
    \begin{equation}\label{modified fct delta eq - ii}
        a_n\big(H(\widehat T_n)-H(\theta_n)\big)-\dot H_{\cal S}\big(a_n(\widehat T_n-\theta_n)\big)\,\rightarrow^{{\sf p},\circ}\,0_{\widetilde\bE}\qquad\mbox{in $(\widetilde\bE,\|\cdot\|_{\widetilde\bE})$}.
    \end{equation}
\end{itemize}
\end{theorem}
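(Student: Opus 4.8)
The plan is to reparametrize the increments so that both assertions become instances of a continuous-mapping argument for $\leadsto^\circ$, in which the ``continuous map'' is the family of difference quotients from Definition \ref{definition quasi hadamard}. Put $\varepsilon_n:=1/a_n$ and $X_n:=a_n(\widehat T_n-\theta_n)$, and for $x\in\bE$ with $\theta_n+\varepsilon_nx\in\V_H$ set $g_n(x):=\varepsilon_n^{-1}\big(H(\theta_n+\varepsilon_nx)-H(\theta_n)\big)$. By (a) we have $\theta_n+\varepsilon_nX_n=\widehat T_n\in\V_H$, so $g_n(X_n)=a_n(H(\widehat T_n)-H(\theta_n))$ is well defined, is $\widetilde\bE$-valued, and is $({\cal F}_n,\widetilde{\cal B}^\circ)$-measurable by (c). Unravelling Definition \ref{definition quasi hadamard} for the singleton ${\cal S}=\{(\theta_n)\}$, condition (d) says precisely that $\|g_n(x_n)-\dot H_{\cal S}(x)\|_{\widetilde\bE}\to0$ for every $x\in\bE_0$ and every sequence $x_n\to x$ in $\bE$ with $\theta_n+\varepsilon_nx_n\in\V_H$; i.e.\ $g_n\to\dot H_{\cal S}$ continuously along $\bE_0$.

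For (i) I would first record that $\dot H_{\cal S}(\xi)$ is $({\cal F}_0,\widetilde{\cal B}^\circ)$-measurable: by (b) the limit $\xi$ is supported on the separable $\bE_0$, and $\dot H_{\cal S}$ is continuous there, so its restriction to $\bE_0$ is $({\cal B}^\circ,\widetilde{\cal B}^\circ)$-measurable (Borel and open-ball $\sigma$-algebras agree on a separable space). The assertion $g_n(X_n)\leadsto^\circ\dot H_{\cal S}(\xi)$ is then an \emph{extended} continuous mapping statement for $\leadsto^\circ$: the inputs converge, $X_n\leadsto^\circ\xi$ with $\xi$ on the separable $\bE_0$, and the maps converge continuously along $\bE_0$, whence the outputs converge. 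The natural mechanism is a Skorokhod-type almost sure representation, available since the limit is supported on a separable Borel set: along a representation $\widehat X_n\to\widehat X_\infty\in\bE_0$ one gets $g_n(\widehat X_n)\to\dot H_{\cal S}(\widehat X_\infty)$ pointwise, and transporting laws back gives the claim. This already yields Theorem \ref{modified delta method for non bootstrap}.

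For (ii) the common space (e) lets me compare $g_n(X_n)$ with $\dot H_{\cal S}(X_n)$, where $\dot H_{\cal S}$ is now the extension to $\bE$ from (f). I would assemble the paired map $x\mapsto(g_n(x),\dot H_{\cal S}(x))$ into $\overline{\widetilde\bE}=\widetilde\bE\times\widetilde\bE$. Combining (d) (first coordinate) with continuity of the extension at points of $\bE_0$ from (f) (second coordinate), these paired maps converge continuously along $\bE_0$ to the diagonal map $x\mapsto(\dot H_{\cal S}(x),\dot H_{\cal S}(x))$; the same extended continuous mapping step gives $(g_n(X_n),\dot H_{\cal S}(X_n))\leadsto^\circ(\dot H_{\cal S}(\xi),\dot H_{\cal S}(\xi))$ in $(\overline{\widetilde\bE},\overline{\widetilde{\cal B}^\circ})$, a law concentrated on the (separable) diagonal. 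Applying the subtraction $h(\widetilde x_1,\widetilde x_2):=\widetilde x_1-\widetilde x_2$, which is continuous and, by (g), $(\overline{\widetilde{\cal B}^\circ},\widetilde{\cal B}^\circ)$-measurable, through the ordinary continuous mapping theorem (\cite[Theorem 6.4]{Billingsley1999}) yields $g_n(X_n)-\dot H_{\cal S}(X_n)\leadsto^\circ0_{\widetilde\bE}$. Since the limit is the constant $0_{\widetilde\bE}$, this upgrades to convergence in probability$^\circ$ — the distance $\|\cdot\|_{\widetilde\bE}$ to a fixed point is measurable (Lemma B.3 in \cite{BeutnerZaehle2016}) — which is exactly (\ref{modified fct delta eq - ii}).

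I expect the main obstacle to be making the extended continuous mapping theorem rigorous in the open-ball $\sigma$-algebra framework rather than the separable-Borel one. The difference quotients $g_n$ need not be measurable as maps on $\bE$ (no measurability of $H$ is assumed), so one cannot freely transport laws through them; the whole argument must instead be threaded through objects that are guaranteed measurable — which is the precise role of the measurability hypotheses in (c), (f), (g) and of forcing $\xi$ onto the separable $\bE_0$. Concretely, the two delicate points are justifying the Skorokhod-type representation / extended mapping step for $\leadsto^\circ$ with a separably supported limit (leaning on the appendices of \cite{BeutnerZaehle2016}), and the final passage from $\leadsto^\circ$ to $\rightarrow^{{\sf p},\circ}$ for a constant limit; everything else is the routine verification sketched above.
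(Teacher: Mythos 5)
Your proposal follows the paper's own proof almost step for step: the reparametrization into difference quotients $g_n$ (the paper's $h_n$ defined on $\bE_n:=\{x\in\bE:\theta_n+a_n^{-1}x\in\V_H\}$), the reading of condition (d) as continuous convergence $g_n\to\dot H_{\cal S}$ along $\bE_0$, the measurability of $\dot H_{\cal S}(\xi)$ via separability of $\bE_0$ (trace open-ball and Borel $\sigma$-algebras coincide there), and, for (ii), exactly the paper's device: pair $x\mapsto(g_n(x),\dot H_{\cal S}(x))$ with the diagonal map $x\mapsto(\dot H_{\cal S}(x),\dot H_{\cal S}(x))$ as limit, pass through an extended continuous mapping step, subtract via (g) and the ordinary Continuous Mapping theorem \cite[Theorem 6.4]{Billingsley1999}, and upgrade $\leadsto^\circ$ to $\rightarrow^{{\sf p},\circ}$ for the constant limit (the paper cites Proposition B.4 of \cite{BeutnerZaehle2016} for this last step).

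The one substantive deviation is the mechanism you offer for the extended continuous mapping step: a Skorokhod-type a.s.\ representation. Taken literally this fails, for exactly the reason you flag at the end: $g_n$ is not measurable on $\bE$, so on a representation space the composite of $g_n$ with a copy of $X_n$ need not be a random variable at all, and equality in law does not transfer through a non-measurable map --- hypothesis (c) guarantees measurability only of the specific composite $a_n(H(\widehat T_n)-H(\theta_n))$ on the original space, not of $g_n$ applied to any variable with the same law. (Moreover, a.s.-representation theorems in the non-separable framework carry additional hypotheses such as perfectness of the underlying maps.) The paper sidesteps this entirely by invoking the ready-made extended Continuous Mapping theorem for the open-ball setting, Theorem C.1 of \cite{BeutnerZaehle2016}, whose hypotheses are precisely the items you verified: $\xi_n(\Omega_n)\subseteq\bE_n$, measurability of the composites $h_n(\xi_n)$, a separably supported limit, and continuous convergence of the maps along $\bE_0$. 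Since you yourself defer to the appendices of \cite{BeutnerZaehle2016} for the rigorous version, your outline becomes the paper's proof once the Skorokhod heuristic is replaced by that citation; no other step needs repair. One bookkeeping item the paper spells out and you elide: the $(\mathcal{F}_n,\overline{\widetilde{\cal B}^\circ})$-measurability of the paired variable $(h_n(\xi_n),\dot H_{\cal S}(\xi_n))$, which follows from componentwise measurability ((c) and (f)) together with $\overline{\widetilde{\cal B}^\circ}\subseteq\widetilde{\cal B}^\circ\otimes\widetilde{\cal B}^\circ$, justified in the paper via Theorem 7.4 of \cite{Bauer2001}.
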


\begin{proof}
The proof is very similar to the proof of Theorem C.4 in \cite{BeutnerZaehle2016}.

(i): For every $n\in\N$, let $\bE_{n}:=\{x_n\in\bE:\theta_n+a_n^{-1}x_n \in \V_H\}$ and define the map $h_n:\bE_{n} \rightarrow \widetilde\bE$ by
$$
    h_n(x_n)\,:=\,\frac{H(\theta_n+a_n^{-1}x_n)-H(\theta_n)}{a_n^{-1}}\,.
$$
Moreover, define the map $h_0: \bE_0 \rightarrow \widetilde\bE$ by
$$
    h_0(x)\,:=\,\dot H_{\cal S}(x).
$$
Now, the claim would follow by the extended Continuous Mapping theorem in the form of Theorem C.1 in \cite{BeutnerZaehle2016} applied to the functions $h_n$, $n\in\N_0$, and the random variables $\xi_n:=a_n(\widehat T_n-\theta_n)$, $n\in\N$, and $\xi_0:=\xi$ if we can show that the assumptions of Theorem C.1 in \cite{BeutnerZaehle2016} are satisfied. First, by assumption (a) and the last part of assumption (b) we have $\xi_n(\Omega_n)\subseteq\bE_n$ and $\xi_0(\Omega_0)\subseteq\bE_0$. Second, by assumption (c) we have that $h_n(\xi_n)=a_n(H(\widehat T_n)-H(\theta_n))$ is $({\cal F}_n,\widetilde{\cal B}^\circ)$-measurable. Third, the map $h_0$ is continuous by the definition of the quasi-Hadamard derivative. Thus $h_0$ is $({\cal B}_0^\circ,\widetilde{\cal B}^\circ)$-measurable, because the trace $\sigma$-algebra ${\cal B}_0^\circ:={\cal B}^\circ\cap\bE_0$ coincides with the Borel $\sigma$-algebra on $\bE_0$ (recall that $\bE_0$ is separable). In particular, $\dot H_{\cal S}(\xi)$ is $({\cal F}_0,\widetilde{\cal B}^\circ)$-measurable. Fourth, condition (a) of Theorem C.1 in \cite{BeutnerZaehle2016} holds by assumption (b). Fifth, condition (b) of Theorem C.1 in \cite{BeutnerZaehle2016} is ensured by assumption (d).

(ii): For every $n\in\N$, let $\bE_n$ and $h_n$ be as above and define the map $\overline h_n: \bE_{n} \rightarrow \overline{\widetilde\bE}$ by
$$
    \overline h_n(x_n)\,:=\,(h_n(x_n),\dot H_{\cal S}(x_n)).
$$
Moreover, define the map $\overline h_0: \bE_0 \rightarrow\overline{\widetilde\bE}$ by
$$
    \overline h_0(x)\,:=\,(h_0(x),\dot H_{\cal S}(x))\,=\,(\dot H_{\cal S}(x),\dot H_{\cal S}(x)).
$$
We will first show that
\begin{equation}\label{modified delta method PROOF 10}
    \overline h_n(a_n(X_n-x))\,\leadsto^\circ\, \overline h_0(X_0)\qquad\mbox{in $(\overline{\widetilde{\bE}},\overline{\widetilde{\cal B}^\circ},\overline{\widetilde d})$}.
\end{equation}
For (\ref{modified delta method PROOF 10}) it suffices to show that the assumption of the extended Continuous Mapping theorem in the form of Theorem C.1 in \cite{BeutnerZaehle2016} applied to the functions $\overline h_n$ and $\xi_n$ (as defined above) are satisfied. The claim then follows by Theorem C.1 in \cite{BeutnerZaehle2016}. First, we have already observed that $\xi_n(\Omega_n)\subseteq\bE_n$ and $\xi_0(\Omega_0)\subseteq\bE_0$. Second, we have seen in the proof of part (i) that $h_n(\xi_n)$ is $({\cal F}_{n},\widetilde{\cal B}^\circ)$-measurable, $n \in \N$. By assumption (f) the extended map $\dot H_{\cal S}:\bE\rightarrow\widetilde\bE$ is $({\cal B}^\circ,\widetilde{\cal B}^\circ)$-measurable, which implies that $\dot H_{\cal S}(\xi_n)$ is $({\cal F}_{n},\widetilde{\cal B}^\circ)$-measurable. Thus, $\overline h_n(\xi_n)=(h_n(\xi_n),\dot H_{\cal S}(\xi_n))$ is $({\cal F}_{n},\widetilde{\cal B}^\circ\otimes\widetilde{\cal B}^\circ)$-measurable (to see this note that, in view of $\widetilde{\cal B}^\circ\otimes\widetilde{\cal B}^\circ=\sigma(\pi_1,\pi_2)$ for the coordinate projections $\pi_1,\pi_2$ on $\overline{\widetilde E}={\widetilde E}\times{\widetilde E}$, Theorem 7.4 of \cite{Bauer2001} shows that the map $(h_n(\xi_n),\dot H_{\cal S}(\xi_n))$ is $({\cal F}_{n},\widetilde{\cal B}^\circ\otimes\widetilde{\cal B}^\circ)$-measurable if and only if the maps $h_n(\xi_n)=\pi_1\circ(h_n(\xi_n),\dot H_{\cal S}(\xi_n))$ and $\dot H_{\cal S}(\xi_n)=\pi_2\circ(h_n(\xi_n),\dot H_{\cal S}(\xi_n))$ are $({\cal F}_n,{\widetilde{\cal B}}^\circ)$-measurable). In particular, the map $\overline h_n(\xi_n)=(h_n(\xi_n),\dot H_{\cal S}(\xi_n))$ is $({\cal F}_{n},\overline{\widetilde{\cal B}^\circ})$-measurable, $n\in\N$. Third, we have seen in the proof of part (i) that the map $h_0=\dot H_{\cal S}$ is $({\cal B}_0^\circ,\widetilde{\cal B}^\circ)$-measurable. Thus the map $\overline h_0$ is $({\cal B}_0^\circ,\widetilde{\cal B}^\circ\otimes\widetilde{\cal B}^\circ)$-measurable (one can argue as above) and in particular $({\cal B}_0^\circ,\overline{\widetilde{\cal B}^\circ})$-measurable. Fourth, condition (a) of Theorem C.1 in \cite{BeutnerZaehle2016} holds by assumption (b). Fifth, condition (b) of Theorem C.1 in \cite{BeutnerZaehle2016} is ensured by assumption (d) and the continuity of the extended map $\dot H_{\cal S}$ at every point of $\bE_0$ (recall assumption (f)). Hence, (\ref{modified delta method PROOF 10}) holds.

By assumption (g) and the ordinary Continuous Mapping theorem (cf.\ \cite[Theorem 6.4]{Billingsley1999}) applied to (\ref{modified delta method PROOF 10}) and the map $h:\overline{\widetilde\bE}\rightarrow\widetilde\bE$, $(\widetilde x_1,\widetilde x_2)\mapsto\widetilde x_1-\widetilde x_2$, we now have
$$
    h_n(a_n(\widehat T_n-\theta_n))-\dot H_{\cal S}(a_n(\widehat T_n-\theta_n))\,\leadsto^\circ\,\dot H_{\cal S}(\xi)-\dot H_{\cal S}(\xi),
$$
i.e.
$$
    a_n\big(H(\widehat T_n)-H(\theta_n)\big)-\dot H_{\cal S}\big(a_n(\widehat T_n-\theta_n)\big)\,\leadsto^\circ\,0_{\widetilde\bE}.
$$
By Proposition B.4 in \cite{BeutnerZaehle2016} we can conclude (\ref{modified fct delta eq - ii}).
\end{proof}

The following lemma provides a chain rule for uniformly quasi-Hadamard differentiable maps (a similar chain rule with different ${\cal S}$ was found in \cite{Varron2015}). To formulate the chain rule let $\widetilde{\widetilde{\V}}$ be a further vector space and $\widetilde{\widetilde{\bE}}\subseteq\widetilde{\widetilde{\V}}$ be a subspace equipped with a norm $\|\cdot\|_{\widetilde{\widetilde{\bE}}}$.

\begin{lemma}\label{lemma chain rule}
Let $H: \V_H\rightarrow \widetilde\V_{\widetilde{H}}$ and $\widetilde{H}: \widetilde\V_{\widetilde{H}}\rightarrow \widetilde{\widetilde{\V}}$ be maps defined on subsets $\V_H\subseteq\V$ and $\widetilde\V_{\widetilde H}\subseteq\widetilde\V$ such that $H(\V_H) \subseteq \widetilde{\V}_{\widetilde{H}}$. Let $\bE_0$ and $\widetilde\bE_0$ be subsets of $\bE$ and $\widetilde\bE$ respectively. Let $\mathcal{S}$ and $\widetilde{\mathcal{S}}$ be sets of sequences in $\V_{H}$ and $\widetilde{\V}_{\widetilde{H}}$ respectively, and assume that the following three assertions hold.
\begin{itemize}
    \item[(a)] For every $(\theta_n) \in \mathcal{S}$ we have $(H(\theta_n)) \in \widetilde{\mathcal{S}}$.
    \item[(b)] $H$ is uniformly quasi-Hadamard differentiable w.r.t.\ $\mathcal{S}$ tangentially to $\bE_0\langle\bE\rangle$ with trace $\widetilde\bE$ and uniform quasi-Hadamard derivative $\dot{H}_{\mathcal{S}}:\bE_0\rightarrow\widetilde\bE$, and we have $\dot{H}_{\mathcal{S}}(\bE_0) \subseteq \widetilde{\bE}_0$.
    \item[(c)] $\widetilde{H}$ is uniformly quasi-Hadamard differentiable w.r.t.\ $\widetilde{\mathcal{S}}$ tangentially to $ \widetilde{\bE}_0\langle\widetilde{\bE}\rangle$ with trace $\widetilde{\widetilde{\bE}}$ and uniform quasi-Hadamard derivative $\dot{\widetilde{H}}_{\widetilde{\mathcal{S}}}:\widetilde\bE_0\rightarrow\widetilde{\widetilde{\bE}}$.
\end{itemize}
Then the map $T:=\widetilde{H} \circ H : \V_H \rightarrow \widetilde{\widetilde{\V}}$ is uniformly quasi-Hadamard differentiable w.r.t.\ $\mathcal{S}$ tangentially to $\bE_0\langle\bE\rangle$ with trace $\widetilde{\widetilde{\bE}}$, and the uniform quasi-Hadamard derivative $\dot T_\mathcal{S}$ is given by $\dot T_\mathcal{S}=\dot{\widetilde{H}}_{\widetilde{\mathcal{S}}}\circ \dot{H}_{\mathcal{S}}$.
\end{lemma}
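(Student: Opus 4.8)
The plan is to verify directly that the composite $T = \widetilde{H} \circ H$ satisfies all the requirements of Definition \ref{definition quasi hadamard} with respect to $\mathcal{S}$, tangentially to $\bE_0\langle\bE\rangle$ and with trace $\widetilde{\widetilde{\bE}}$, and that its uniform quasi-Hadamard derivative equals $\dot{\widetilde{H}}_{\widetilde{\mathcal{S}}} \circ \dot{H}_{\mathcal{S}}$. The entire argument rests on a single device: from an arbitrary admissible quadruple for $T$, I construct a ``derived'' quadruple to which the differentiability of $\widetilde{H}$ can be applied, where the role of the perturbation sequence is played by the difference quotients of $H$.

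First I would dispose of the two routine requirements. For the trace condition, note that $T(y_1) - T(y_2) = \widetilde{H}(H(y_1)) - \widetilde{H}(H(y_2))$; since $H(\V_H) \subseteq \widetilde\V_{\widetilde H}$, both arguments lie in $\widetilde\V_{\widetilde H}$, so assertion (c) (trace $\widetilde{\widetilde{\bE}}$) gives $T(y_1) - T(y_2) \in \widetilde{\widetilde{\bE}}$ for all $y_1,y_2\in\V_H$. For continuity of the candidate derivative, observe that $\dot{H}_{\mathcal{S}}:\bE_0 \to \widetilde{\bE}$ is continuous and maps into $\widetilde{\bE}_0$ by (b), while $\dot{\widetilde{H}}_{\widetilde{\mathcal{S}}}:\widetilde{\bE}_0 \to \widetilde{\widetilde{\bE}}$ is continuous by (c); hence the composition $\dot{T}_{\mathcal{S}}:=\dot{\widetilde{H}}_{\widetilde{\mathcal{S}}}\circ\dot{H}_{\mathcal{S}}$ is a continuous map $\bE_0 \to \widetilde{\widetilde{\bE}}$.

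The core step is the limit relation (\ref{def eq for AHD}) for $T$. Fix an admissible quadruple $((\theta_n),x,(x_n),(\varepsilon_n))$ for $T$, i.e.\ $(\theta_n)\in\mathcal{S}$, $x\in\bE_0$, $(x_n)\subseteq\bE$ with $\|x_n-x\|_{\bE}\to0$ and $(\theta_n+\varepsilon_nx_n)\subseteq\V_H$, and $(\varepsilon_n)\subseteq(0,\infty)$ with $\varepsilon_n\to0$. I would then set
$$
\widetilde{\theta}_n := H(\theta_n),\qquad \widetilde{x}_n := \frac{H(\theta_n+\varepsilon_nx_n)-H(\theta_n)}{\varepsilon_n},\qquad \widetilde{x} := \dot{H}_{\mathcal{S}}(x),
$$
so that $\widetilde{\theta}_n+\varepsilon_n\widetilde{x}_n = H(\theta_n+\varepsilon_nx_n)$ and therefore
$$
\frac{T(\theta_n+\varepsilon_nx_n)-T(\theta_n)}{\varepsilon_n} = \frac{\widetilde{H}(\widetilde{\theta}_n+\varepsilon_n\widetilde{x}_n)-\widetilde{H}(\widetilde{\theta}_n)}{\varepsilon_n}.
$$
The task then reduces to checking that $((\widetilde{\theta}_n),\widetilde{x},(\widetilde{x}_n),(\varepsilon_n))$ is an admissible quadruple for $\widetilde{H}$ in the sense of Definition \ref{definition quasi hadamard}; once this is known, assertion (c) applied to it yields exactly the desired limit, since $\dot{\widetilde{H}}_{\widetilde{\mathcal{S}}}(\widetilde{x}) = \dot{T}_{\mathcal{S}}(x)$.

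Verifying admissibility is the heart of the matter, and it is where the three hypotheses act in concert. Membership $(\widetilde{\theta}_n)\in\widetilde{\mathcal{S}}$ is precisely assertion (a); the tangent point $\widetilde{x}=\dot{H}_{\mathcal{S}}(x)$ lies in $\widetilde{\bE}_0$ by the second part of (b); each $\widetilde{x}_n$ lies in $\widetilde{\bE}$ because $H$ has trace $\widetilde{\bE}$, and $\widetilde{\theta}_n+\varepsilon_n\widetilde{x}_n=H(\theta_n+\varepsilon_nx_n)\in\widetilde\V_{\widetilde H}$ because $\theta_n+\varepsilon_nx_n\in\V_H$ together with $H(\V_H)\subseteq\widetilde\V_{\widetilde H}$. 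The one genuinely nontrivial requirement is $\|\widetilde{x}_n-\widetilde{x}\|_{\widetilde{\bE}}\to0$; but this convergence is nothing other than the defining limit (\ref{def eq for AHD}) for the uniform quasi-Hadamard differentiability of $H$ along the original quadruple, supplied by assertion (b). Thus the only real obstacle is not any estimate but the bookkeeping showing that the output of $H$-differentiability feeds exactly into the input of $\widetilde{H}$-differentiability — which is precisely why assumption (a) and the condition $\dot{H}_{\mathcal{S}}(\bE_0)\subseteq\widetilde{\bE}_0$ are imposed. With these in place the limit relation for $T$ follows at once, completing the verification.
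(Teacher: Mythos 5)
Your proposal is correct and follows essentially the same route as the paper's proof: rewrite the difference quotient of $T$ as a difference quotient of $\widetilde{H}$ at $H(\theta_n)$ with perturbation $\widetilde{x}_n=(H(\theta_n+\varepsilon_n x_n)-H(\theta_n))/\varepsilon_n$, use assumption (b) to get $\|\widetilde{x}_n-\dot{H}_{\mathcal{S}}(x)\|_{\widetilde\bE}\to 0$ together with the membership conditions from (a) and the trace/range hypotheses, and then apply (c). Your explicit check of the continuity of the composed derivative is a point the paper leaves implicit, but otherwise the two arguments coincide.
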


\begin{proof}
Obviously, since $H(\V_H)\subseteq \widetilde{\V}_{\widetilde{H}}$ and $\widetilde{H}$ is associated with trace $\widetilde{\widetilde{\bE}}$, the map $\widetilde{H} \circ H$ can also be associated with trace $\widetilde{\widetilde{\bE}}$.

Now let $((\theta_n),x,(x_{n}),(\varepsilon_n))$ be a quadruple with $(\theta_{n})\in{\cal S}$, $x\in\bE_0$, $(x_{n})\subseteq\bE$ satisfying $\|x_{n}-x\|_{\bE}\to 0$ as well as $(\theta_n+\varepsilon_nx_{n})\subseteq\V_H$, and $(\varepsilon_n)\subseteq(0,\infty)$ satisfying $\varepsilon_n\to 0$. Then
\begin{eqnarray*}\label{eq cahin rule 1}
    \lefteqn{\Big\|\dot{\widetilde{H}}_{\widetilde{\mathcal{S}}}(\dot{H}_{\mathcal{S}}(x))- \frac{\widetilde{H}(H(\theta_{n}+\varepsilon_nx_{n}))-\widetilde{H}(H(\theta_n))}{\varepsilon_n}\Big\|_{\widetilde{\widetilde{\bE}}}} \nonumber \\
    & = & \Big\|\dot{\widetilde{H}}_{\widetilde{\mathcal{S}}}(\dot{H}_{\mathcal{S}}(x))-\frac{\widetilde{H}\big(H(\theta_{n})+\varepsilon_n\frac{H(\theta_n+\varepsilon_nx_{n})-H(\theta_n)}{\varepsilon_n}\big) -\widetilde{H}(H(\theta_n))}{\varepsilon_n}\Big\|_{\widetilde{\widetilde{\bE}}}.
\end{eqnarray*}
Note that by assumption $H(\theta_n) \in \widetilde{\V}_{\widetilde{H}}$ and in particular $(H(\theta_n)) \in \widetilde{\mathcal{S}}$. By the uniform quasi-Hadamard differentiability of $H$ w.r.t.\ $\mathcal{S}$ tangentially to $\bE_0\langle\bE\rangle$ with trace $\widetilde\bE$
$$
    \lim_{n \rightarrow \infty} \Big\|\frac{H(\theta_n+\varepsilon_nx_{n})-H(\theta_n)}{\varepsilon_n} - \dot{H}_{\mathcal{S}}(x)\Big\|_{\widetilde{E}}=0.
$$
Moreover $(H(\theta_n+\varepsilon_nx_{n})-H(\theta_n))/\varepsilon_n \in \widetilde{\bE}$ and $\dot{H}_{\mathcal{S}}(x) \in \widetilde{\bE}_0$, because $H$ is associated with trace $\widetilde{\bE}$ and $\dot{H}_{\mathcal{S}}(\bE_0) \subseteq \widetilde{\bE}_0$. Hence, by the uniform quasi-Hadamard differentiability of $\widetilde{H}$ w.r.t.\ $\widetilde{\mathcal{S}}$ tangentially to $\widetilde{\bE}_0\langle\widetilde{\bE}\rangle$ we obtain
$$
    \lim_{n\to\infty}\Big\| \dot{\widetilde{H}}_{\widetilde{\mathcal{S}}}(\dot{H}_{\mathcal{S}}(x))-\frac{\widetilde{H} \big(H(\theta_{n})+\varepsilon_n\frac{H(\theta_n+\varepsilon_nx_{n})-H(\theta_n)}{\varepsilon_n}\big)- \widetilde{H}(H(\theta_n))}{\varepsilon_n}\Big\|_{\widetilde{\widetilde{\bE}}}=0.
$$
This completes the proof.
\end{proof}


\end{document}